\DeclareMathAlphabet{\mathpzc}{OT1}{pzc}{m}{it}
\numberwithin{equation}{section}
\newtheorem{theorem}{Theorem}[section]
\newtheorem{corollary}[theorem]{Corollary}
\newtheorem{lemma}[theorem]{Lemma}
\newtheorem{prop}[theorem]{Proposition}
\theoremstyle{definition}
\newtheorem{defn}[theorem]{Definition}
\newtheorem{example}[theorem]{Example}
\newtheorem{ques}[theorem]{Question}
\newtheorem{remark}[theorem]{Remark}
\def \begineq{\begin{equation}}
\def \endeq{\end{equation}}
\def \bb{\mathbb}
\def \CC{{\bb{C}}}
\def \PP{{\bb{P}}}
\def \RR{{\bb{R}}}
\def \ZZ{{\bb{Z}}}
\def \({\left(}
\def \){\right)}
\def \<{\langle}
\def \>{\rangle}
\def \bar{\overline}
\def \tensor{\otimes}
\def \Aut{{\rm Aut}}
\def \Diff{{\rm Diff}}
\def \img{{\rm Im}}
\def\VC{{\rm VC}}
\def \Id{\rm Id}
\def \Sing{\rm Sing}
\def \Fr{\rm Fr}
\def \qed{\hfill $\square$ \vspace{0.03in}}
\begin{document}

\title{Equivariant cobordism of torus orbifolds}

\author[S. Sarkar]{Soumen Sarkar}

\address{Department of Mathematics, Indian Institute of Technology Madras, Chennai, 600036, India.}

\email{soumen@iitm.ac.in}

\author[D. Y. Suh]{Dong Youp Suh}

\address{Department of Mathematical Sciences, KAIST, Daejeon 305701, Republic of Korea}

\email{dysuh@kaist.ac.kr}

\subjclass[2010]{55N22, 57R90}

\keywords{manifold with corners, torus action, torus orbifolds, torus cobordism}
\thanks{The authors would like to thank Mainak Poddar and Nigel Ray for many helpful discussion.
The first author thanks Indian Institute of Sciences, Pacific Institute for Mathematical Sciences and
University of Regina for support. He also thanks Indian Statistical Institute, 
Kolkata and Institute of Mathematical Sciences for supporting his visiting fellowship.
The second author is partially supported by the National Research Foundation of Korea(NRF) grant funded by the Korea government(MSIP) (No. 2016R1A2B4010823).
}

\abstract
Torus orbifolds are topological generalization of symplectic toric orbifolds.
We give a construction of smooth orbifolds with torus actions whose boundary
is a disjoint union of torus orbifolds using toric topological method. As a
result, we show that any orientable locally standard torus orbifold is
equivariantly cobordant to some copies of orbifold complex projective spaces. 
We also discuss some further equivariant cobordism results including the cases
when torus orbifolds are actually torus manifolds.
\endabstract

\maketitle

\section{Introduction}\label{intro}
Cobordism, a fundamental concept in topology,  was first introduced by Lev Pontryagin
in his pioneering work on a classifiction of manifolds, \cite{Pon}. There are two
known definitions of bordism; one is geometric and another is homotopy theoretic.
Thom constructions were used in \cite{Tho} to show that cobordism
groups could be computed through homotopy theory in the early  1950's. Now the
oriented, non-oriented and complex cobordism rings of manifolds are completely
known in respective category. Conner and Floyed generalized this definition
to the equivariant category to study transformation groups in the beginning of sixties.
After few years, tom Dieck introduced homotopy theoretic cobordism in equivariant category. 
But, in the equivariant category these two cobordism theories are not equivalent, 
see \cite{Sinha}.
Even though there have been many developments such as \cite{Katz, LT, Madsen, Stong,
Stong2, tom}, the equivariant cobordism rings are not determined for any nontrivial groups. 
The main reason is that the Thom transversality theorem may not hold in equivariant 
category, and hence the equivariant cobordism cannot be reduced to  (equivariant)
 homotopy theory. Here, we consider the geometric definition of equivariant cobordism. 

Orbifold, which is a natural generalization of manifold, was introduced
by I. Satake in \cite{Sat} where it was called $V$-manifold. The study
of oriented cobordism of orbifold first appeared in \cite{Dru1} where the
author introduced a complete set of invariants which determine the oriented
cobordism classes up to torsion.
Torus orbifold is a generalization of symplectic toric orbifold \cite{LeTo} and it was introduced in \cite{HM} where some topological invariants of these spaces were studied.
Some examples of torus orbifolds are the class of quasitoric orbifolds whose topological invariants are studied in \cite{PS}. Briefly, a torus orbifold is a closed orientable $2n$-dimensional orbifold having an effective action of a real $n$-dimensional compact torus with nonempty fixed point set. In addition if the torus action is locally standard and the orbit space is a simple polytope then it is called a quasitoric orbifold. 
When the orbifold singularities of torus orbifolds are trivial, they are called torus manifolds, see \cite{HM} for topological properties and connections between torus manifolds and multifans. 

In this paper we  modify the basic construction of 1.5 in \cite{DJ}
to produce effective orbifolds with torus actions whose boundary consist of disjoint copies of locally standard torus orbifolds. 
Using this construction, we can show in Theorem \ref{coborthm} that any locally standard torus orbifold  is equivariantly cobordant
to a disjoint union of some  orbifold complex projective spaces.
We also have some more equivariant cobordism results on locally standard torus
orbifolds and manifolds, see Theorem \ref{thm_4d-loc-tor} and \ref{bdd_orb_hir_sur}.  

The article is organized as follows.
Following \cite{ALR}, we recall the definitions and some facts concerning effective orbifolds in Section \ref{deforb}.
In Section \ref{tororb},  we define locally standard torus orbifold $X$ over a nice manifold with corners $Q$,
and from $X$ we extract  two data one of which is a rational characteristic function $\lambda$, and the other is a principal $T^n$-bundle $\tau$ over $Q$, 
which we call  combinatorial and topological data of $X$. 
We also show that from a combinatorial and topological data $\lambda$ and $\tau$  over $Q$, we can actually construct a locally standard torus orbifold $X(Q,\lambda,\tau)$.
We then show that $X$ and $X(Q,\lambda,\tau)$ are equivariantly homeomorphic in Theorem~\ref{clasi}. 
All these arguments are appropriate modifications of the basic construction  in \cite{DJ}.
We discuss the generalization of weighted projective spaces in Subsection \ref{gwps}. 
These spaces are called  orbifold complex projective spaces. 
We also define  orbifold Hirzebruch surface in Subsection~\ref{orb_hir_sur}, which is an orbifold modification of Hirzebruch surface.
In Section~\ref{def}, we give explicit  construction of $(2n+1)$-dimensional smooth orbifolds with $T^n$-action whose boundaries are disjoint union of 
locally standard torus orbifolds.
 In Section~\ref{cobor}, we show that any torus orbifold with a locally standard torus
action is equivariantly cobordant to some copies of orbifold complex projective spaces in Theorem~\ref{coborthm}. 
We also give several further equivariant cobordism results of torus manifolds and orbifolds including orbifold Hirzebruch surfaces.


\section{Classical effective orbifolds}\label{deforb}
An orbifold is a singular space that locally looks like the quotient of an open subset of a Euclidean space by an action of a finite group. We recall the definitions and some facts concerning effective orbifolds from \cite{ALR}. The reader may also consult \cite{MM} for an excellent exposition of the foundations of the theory of differentiable orbifolds. 

\begin{defn}
 Let $X$ be a Hausdorff topological space.
An $n$-dimensional \emph{effective orbifold~ chart} on an open subset $U \subseteq X $ is given by a triple $(\widetilde{U} , G, \varphi)$ where
\begin{enumerate}
 \item $ \widetilde{U} $ is a connected open subset of $ \RR^n $,
\item $ G$ is a finite subgroup of the self diffeomorphisms of $ \widetilde{U} $, and has an effective $G$-action on $ \widetilde{U} $,
\item $ \varphi $ is a map from $ \widetilde{U} $ to $X$ such that $\varphi$ is a $G$-invariant map inducing 
a homeomorphism from $\widetilde{U}/ G$ onto $U$.
\end{enumerate}
\end{defn}
\begin{defn}
 An \emph{embedding} $\xi \colon (\widetilde{V} , H, \zeta) \to (\widetilde{U} , G, \varphi)$ between two
orbifold charts is a smooth embedding $\xi \colon \widetilde{V} \to \widetilde{U}$ of manifolds such that $ \varphi \circ \xi =\zeta$.
\end{defn}
\begin{defn}\label{def_compatibile_orbifold_chart}
 Two orbifold charts $(\widetilde{V}, H, \zeta)$ on $V = \zeta (\widetilde{V}) \subseteq X$ and
$( \widetilde{U}, G, \varphi)$ on $U = \varphi(\widetilde{U} ) \subseteq X$ with 
 $ V \cap U \neq \emptyset$
are \emph{locally~ compatible} if for any $x \in V \cap U$ there exists an open
neighborhood $ W \subseteq V \cap U $ of $ x$ and
an orbifold chart $(\widetilde{W}, K, \mu)$ on $W$ such that there exist smooth  embeddings
$( \widetilde{W}, K, \mu) \to (\widetilde{V}, H, \zeta) $ and $ (\widetilde{W}, K, \mu) \to (\widetilde{U}, G, \varphi)$.
\end{defn}
\begin{defn}
 An \emph{effective orbifold  atlas} on $X$ is a family $\mathcal{U} = \{(\widetilde{U}, G, \varphi)\}$ of locally compatible effective orbifold charts
such that $\{\varphi(\widetilde{U})\mid \widetilde{U} \in \mathcal{U}\}$ is an open cover of $ X $.
\end{defn}

An atlas $\mathcal{V}$ is a {\em refinement} of an atlas $\mathcal{U}$ if for
any chart $(\widetilde{V}, H, \zeta) \in \mathcal{V}$ there exists an embedding
$\xi \colon (\widetilde{V}, H, \zeta) \to (\widetilde{U}, G, \varphi)$ into some
chart $ (\widetilde{U}, G, \varphi) \in \mathcal{U}$.

Two orbifold atlases are said to be {\em equivalent } if they have a common refinement.
We denote the equivalence class of an atlas $\mathcal{U}$ by $[\mathcal{U}]$.

\begin{defn}
Let $X$ be a para-compact Hausdorff space equipped with an equivalence class $[\mathcal{U}]$ of
$n$-dimensional effective orbifold atlases. The pair $(X, \mathcal{U})$, denoted by $\mathcal{X}$, is called
an \emph{effective  orbifold} of dimension $n$. The space $|\mathcal{X}|:=X$ is called the \emph{underlying
space} of $\mathcal{X}$.
\end{defn}

For simplicity of notation we may not distinguish $\mathcal{X}$ and $X$ when the orbifold atlas on $X$ is clear. Throughout this paper we assume that all orbifolds are effective.

\begin{defn}
Let $\mathcal{X} = (X, \mathcal{U})$ be an orbifold and $x \in X$. Let $( \widetilde{U}, G, \varphi)$ be
an orbifold chart so that $x = \varphi(\widetilde{x}) \in \varphi(\widetilde{U}) \subset X$. The \emph{local~ group} at $x$
is defined to be the group $G_x = \{g \in G \mid g \cdot \widetilde{x} = \widetilde{x}\}.$
\end{defn}

The group $G_x$ is uniquely determined up to isomorphism. We use the notion of local
group to define the singular set of the orbifold $\mathcal{X}$ as follows.
A point $x \in X $ is called a \emph{nonsingular point} (or {\em smooth point}) if the group $G_x$ is trivial, and  otherwise $x$ is called a \emph{singular point}.
The set of singular points of an orbifold $\mathcal{X} = (X, \mathcal{U})$ is called the (orbifold) \emph{singular set}, denoted
by $\varSigma \mathcal{X}$. That is,
\begin{equation}
\varSigma \mathcal{X} = \{x \in X \mid G_x \neq 1\}.
\end{equation}

\begin{example}
Let $G$ be a finite subgroup of $GL_n (\CC)$ and let $X = \CC^n /G$.
This is an orbifold complex manifold called a quotient singularity. Orbifold $X$ has the structure
of an algebraic variety, arising from the algebra of $G$-invariant polynomials on $\CC^n$.
\qed
\end{example}

\begin{example} Consider
$S^{2n+1} = \big\{(z_0 , \ldots , z_n ) \in \CC^{n+1} \mid {\displaystyle \sum_{i=0}^n} ~|z_i|^2 = 1\big \},$
and the action of the circle group $S^1$ is  defined by
\begin{equation}
\alpha (z_0, \ldots, z_n) = (\alpha^{a_0} z_0, \ldots, \alpha^{a_n} z_n )
\end{equation}
for $\alpha \in S^1$, where  integers $a_i$'s are relatively prime. The quotient space
\begin{equation}
 W\PP(a_0, \ldots, a_n) = S^{2n+1}/S^1
\end{equation}
has an orbifold structure, denoted by $\mathcal{W}\PP(a_0, \ldots, a_n)$.
This orbifold is called a {\em weighted projective space} with weights 
$\{a_0, \ldots, a_n\}$. In particular, the orbifold $\mathcal{W}\PP(1, a)$ is
called a {\em teardrop}. \qed

\end{example}

Similarly to the definition of a manifold with boundary, we can talk about an orbifold with
boundary. We write $\RR_{\geq 0} = \{x \in \RR ~:~ x \geq 0\}$. 
\begin{defn}
 Orbifold charts of an \emph{orbifold with boundary} $W$ are given by the
compatible triples  $\{(\widetilde{U} , G, \varphi)\} $ where
$ \widetilde{U} \subset \RR^{n-1} \times \RR_{\geq 0}$ for some $n$, $G$ is a finite group acting effectively on $\widetilde{U}$,
and $\{\varphi{(\widetilde{U})}\}$ is an open cover of $W$.
\end{defn}
 The orbifold boundary of $W$, denoted by $\partial{W}$, is the set of points
 $w \in W$ such that $w \in \varphi(\partial{\widetilde{U}})\}$ for some chart
 $(\widetilde{U} , G, \varphi)$ of $W$. We remark that the boundary of an orbifold
 depends on the orbifold chatrs on it. For example, $[0, 1)$ with the trivial chart
 $\{0\}$ is the boundary, but with the chart $((-1, 1), \ZZ_2, \phi)$ where $\ZZ_2$-acts
 by reflection $[0, 1)$ has empty boundary. 
\begin{defn}
 Let  $\mathcal{X} = (X, \mathcal{U})$ and $\mathcal{Y} = (Y, \mathcal{V})$  be two orbifolds.
A map $f \colon X \to Y$ is called an  {\em orbifold map} (respectively {\em orbifold smooth map}) if for any point $x \in X$ there are charts
$(\widetilde{U}, G, \varphi)$ containing $\varphi^{-1}(x)$ and $(\widetilde{V}, H, \zeta)$ containing $\zeta^{-1}(f(x))$, such that $f$ maps $U = \varphi(\widetilde{U})$ into $V = \zeta(\widetilde{V})$ and $f$ can be lifted to a continuous  (resp. smooth) map $\widetilde{f} \colon \widetilde{U} \to \widetilde{V}$ with $\zeta \circ \widetilde{f} = f \circ \varphi$.
\end{defn}
Using this, we can define the notion of  homeomorphism (respectively diffeomorphism) of orbifolds. We note that homeomorphism as topological spaces may not induce a homeomorphism
as orbifolds. For example, a triangle is homeomorphic to a square as a topological space
but not as an orbifold.
\begin{defn} 
Two orbifolds $\mathcal{X} $ and $\mathcal{Y}$ are \emph{homeomorphic} (resp. \emph{diffeomorphic}) if there are orbifold maps
(resp. orbifold smooth  maps) 
 $f \colon X \to Y $ and $g \colon Y \to X $ such that $g \circ f = 1_X$  and $f \circ g = 1_Y$.
\end{defn}

\begin{defn}
An orbifold $\mathcal X=(X, \mathcal U)$ is  {\em orientable} if 
for each chart $(\widetilde{U}, G, \varphi)\in \mathcal U$ the open subset $\widetilde{U}\subset \RR^n$ can be  given 
with an orientation which is invariant 
under the action of $G$,
and each embedding $\xi \colon (\widetilde{W}, K, \mu) \to (\widetilde{U}, G, \varphi)$ of charts in $\mathcal U$ is orientation-preserving.
\end{defn}

Note that the underlying space $X$ of an orbifold $\mathcal X=(X,\mathcal U)$ can be obtained by gluing local charts 
using transition functions in the following way. 
For  two  charts $(\widetilde{U}, G, \varphi)$, $(\widetilde{V}, H, \zeta)\in \mathcal U$ with $x\in U\cap V$  there is a chart $(\widetilde{W}, K, \mu)$  with smooth embeddings
$$\lambda_1 \colon (\widetilde{W}, K, \mu)\to (\widetilde{U}, G, \varphi) \quad \mbox{and} \quad \lambda_2 \colon (\widetilde{W}, K, \mu)\to (\widetilde{V}, H, \zeta)$$ such that $x\in W\subset U\cap V$.
Then $\lambda_2\lambda_1^{-1} \colon \lambda_1(\widetilde{W})\to \lambda_2(\widetilde{W})$ is a $K$-equivariant diffeomorphism.
Thus we can glue $\widetilde{U}/G$ and $\widetilde{V}/H$ by identifying $\varphi(\widetilde{u})\sim\zeta(\widetilde{v})$ if 
$\lambda_2\lambda_1^{-1}(\widetilde{u})=\widetilde{v}$.
Then we have a homeomorphism
$$\Phi \colon \bigsqcup_{\widetilde{U}\in \mathcal U}(\widetilde{U}/G)/\sim\, \longrightarrow X$$
induced from the collection $\{\varphi \colon \widetilde{U} \to X\}$.

We now define the tangent bundle and the frame bundle of an an effective orbifold by gluing local charts using appropriate transition function. We remark that tangent bundles and frame bundles are two of  many  notions originally defined for manifolds which can be extended analogously to orbifolds.
For an orbifold chart $(\widetilde{U}, G,\varphi)$ consider the tangent bundle $T\widetilde{U}$. 
Then it has the induced smooth action of $G$, and the natural projection 
$p \colon T\widetilde{U}/G\to U$ is induced from $\varphi$. 
It is known that for each $x=\varphi(\widetilde{x})\in U$ the fiber
$p^{-1}(x)$ is diffeomorphic to $T_{\widetilde{x}}\widetilde{U}/G_x$.
Thus $p$ is a bundle-like map whose fiber is of the form $\RR^n/G_0$ for some finite subgroup $G_0\subset GL_n(\RR)$.
We thus have $2n$-dimensional orbifold chart $(T\widetilde{U}, G, \pi)$ for each $(\widetilde{U}, G, \phi)\in \mathcal U$ 
where $\pi \colon T\widetilde{U} \to T\widetilde{U}/G$ is the orbit map.
Let 
$$T\mathcal U:=\{(T\widetilde{U}, G, \pi)\mid (\widetilde{U}, G, \phi)\in \mathcal U\}.$$
For  two  charts $(\widetilde{U}, G, \varphi)$, $(\widetilde{V}, H, \zeta)\in \mathcal U$ with $x\in U\cap V$  there is a chart $(\widetilde{W}, K, \mu)$  with smooth embeddings
$\lambda_1 \colon (\widetilde{W}, K, \mu)\to (\widetilde{U}, G, \varphi)$ and $\lambda_2 \colon (\widetilde{W}, K, \mu)\to (\widetilde{V}, H, \zeta)$ such that $x\in W\subset U\cap V$.
We glue  $T\widetilde{U}/G$ and $T\widetilde{V}/H$ using the transition function
\begin{equation}\label{transition funtion}
D_{\widetilde{y}}(\lambda_2\lambda_1^{-1}) \colon T_{\widetilde{y}}\lambda_1(\widetilde W) \to T_{\widetilde{z}}\lambda_2(\widetilde W)
\end{equation}
for $\widetilde{y}\in \lambda_1(\widetilde{W})$ and  $\widetilde{z}=\lambda_2\lambda_1^{-1}(\widetilde y)$
to produce the identification space 
$$TX:=\bigsqcup_{\widetilde{U}\in \mathcal U}(T\widetilde{U}/G)/\sim.$$

\begin{defn}
The  {\em tangent bundle}  of an $n$-dimensional orbifold $\mathcal X=(X, \mathcal U)$ is the  $2n$-dimensional orbifold 
$T\mathcal X=(TX, T\mathcal U)$ with the natural projection map $p \colon TX\to X$, which is a smooth map of orbifolds,
with fiber $p^{-1}(x)=T_{\widetilde{x}}\widetilde{U}/G_x$ for each $x\in X$.
\end{defn}

For a given local chart $(\widetilde{U}, G, \varphi)$ we choose a $G$-invariant inner product on $T\widetilde{U}$, and 
consider the corresponding frame manifold
$${\Fr}(\widetilde{U}):=\{(\widetilde{x}, B)\mid B\in O(T_{\widetilde{x}}\widetilde{U})\}$$
with the $G$-action defined by
$$g(\widetilde{x}, B)=(g\widetilde{x}, Dg_{\widetilde{x}}B).$$
Since $G$-action on $\widetilde{U}$ is effective, the $G$ action on ${\Fr}(\widetilde{U})$ is free, and 
the orbit space ${\Fr}(\widetilde{U})/G$ is a smooth manifold.
On the other hand, there is a right $O(n)$-action on ${\Fr}(\widetilde{U})/G$ which is induced from the group multiplication of $O(n)$,
and it can be seen that for each point $[(\widetilde{x}, A)]\in {\Fr}(\widetilde{U})/G$ its isotropy group is isomorphic to $G_x$.
Moreover by taking the quotient by the $O(n)$-action we have the  induced natural projection ${\Fr}(\widetilde{U})/G\to U$.

\begin{defn}\label{frame bundle}
The frame bundle of an orbifold $\mathcal X=(X, \mathcal U)$ is the space obtained by gluing the local charts
${\Fr}(\widetilde{U})/G\to U$ using the $O(n)$-transition functions obtained from the tangent bundle of $X$.
\end{defn}

One of the important properties of the frame bundle is the following theorem.

\begin{theorem}[Theorem 1.23, \cite{ALR}]\label{global quotient orbifold}
For a given effective $n$-dimensional  orbifold $\mathcal X$, its 
frame bundle ${\Fr}(\mathcal X)$ is a smooth
manifold with a smooth effective almost-free $O(n)$-action. The original orbifold $\mathcal X$ is naturally diffeomorphic to the
resulting quotient orbifold ${\Fr}(\mathcal X)/O(n)$.
\end{theorem}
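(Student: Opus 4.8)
The plan is to prove the statement by the standard local-to-global argument, reducing everything to the behaviour on a single chart and then assembling the pieces. On a fixed chart $(\widetilde{U}, G, \varphi)$ the key point is that the $G$-action on the frame manifold ${\Fr}(\widetilde{U})$ is not merely effective but \emph{free}; granting this, a finite group $G$ acting freely and smoothly on the manifold ${\Fr}(\widetilde{U})$ makes the orbit space ${\Fr}(\widetilde{U})/G$ a smooth manifold, and all the extra structure is then carried along by the construction. So the first task is to establish freeness.

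First I would fix a $G$-invariant Riemannian metric on $\widetilde{U}$, obtained by averaging any metric over the finite group $G$; with respect to this metric every $g \in G$ acts as an isometry, compatibly with the $G$-invariant inner product used to define ${\Fr}(\widetilde{U})$. Suppose $g(\widetilde{x}, B) = (\widetilde{x}, B)$ for some frame $(\widetilde{x}, B)$. Then $g\widetilde{x} = \widetilde{x}$ and $Dg_{\widetilde{x}}B = B$, so $Dg_{\widetilde{x}} = \mathrm{Id}$ because $B$ is a basis. An isometry of a connected Riemannian manifold that fixes a point and has identity differential there is the identity map (via the exponential map and uniqueness of geodesics), so $g$ acts trivially on the connected set $\widetilde{U}$; effectiveness of the $G$-action then forces $g$ to be the identity element. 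Hence the action is free and ${\Fr}(\widetilde{U})/G$ is a smooth manifold.

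Next I would analyse the $O(n)$-action. The right $O(n)$-action $(\widetilde{x}, B)\cdot R = (\widetilde{x}, BR)$ commutes with the $G$-action and therefore descends to ${\Fr}(\widetilde{U})/G$. For a point $[(\widetilde{x}, A)]$, the identity $[(\widetilde{x}, AR)] = [(\widetilde{x}, A)]$ holds iff $R = A^{-1}(Dg_{\widetilde{x}})^{-1}A$ for some $g \in G_x$, so the isotropy subgroup is conjugate to the image of $G_x$ under $g \mapsto Dg_{\widetilde{x}}$; by the same rigidity argument this representation is faithful, whence the isotropy is a finite group isomorphic to $G_x$. Thus the $O(n)$-action is almost-free and effective on each chart. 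To globalise, I would glue the pieces ${\Fr}(\widetilde{U})/G$ along the $O(n)$-equivariant transition maps induced by the derivatives $D(\lambda_2\lambda_1^{-1})$ of the orbifold transition functions; the cocycle condition follows from the chain rule, and since each transition map is $O(n)$-equivariant the local actions patch to a single smooth effective almost-free $O(n)$-action on the resulting manifold ${\Fr}(\mathcal{X})$, which is Hausdorff and paracompact because $|\mathcal{X}|$ is.

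Finally I would identify the quotient. Taking the $O(n)$-quotient chartwise gives $({\Fr}(\widetilde{U})/G)/O(n) = \widetilde{U}/G = U$, and the slice charts for the $O(n)$-action equip ${\Fr}(\mathcal{X})/O(n)$ with an orbifold atlas which refines to, and is equivalent to, the given atlas $\mathcal{U}$, yielding the natural orbifold diffeomorphism $\mathcal{X} \cong {\Fr}(\mathcal{X})/O(n)$. I expect the freeness/rigidity step to be the main technical obstacle, since it is precisely what upgrades \emph{effective} to a genuinely free action and hence makes ${\Fr}(\mathcal{X})$ a manifold rather than merely an orbifold; the remaining subtlety is checking that the quotient-orbifold atlas is genuinely equivalent to $\mathcal{U}$, i.e.\ that the identification is natural in the orbifold sense and not only a homeomorphism of underlying spaces.
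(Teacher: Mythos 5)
Your proposal is correct and takes essentially the same route as the paper, which in fact offers no proof of its own: the statement is quoted from Theorem 1.23 of \cite{ALR}, and the paper's preceding discussion merely asserts the two facts you establish in detail (effectiveness of the $G$-action on $\widetilde{U}$ forces freeness of the $G$-action on ${\Fr}(\widetilde{U})$ via the isometry-rigidity argument, and the $O(n)$-isotropy of $[(\widetilde{x},A)]$ is a finite group isomorphic to $G_x$) before gluing the charts ${\Fr}(\widetilde{U})/G$ by $O(n)$-valued transition functions, exactly as you do. The only point to tighten is the gluing step: since the $G$-invariant inner products are chosen chart by chart, the derivatives $D(\lambda_2\lambda_1^{-1})$ need not carry orthonormal frames to orthonormal frames, so one should first fix a global invariant Riemannian metric via a partition of unity (or compose with pointwise Gram--Schmidt); this is a routine adjustment that does not affect your argument.
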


\section{Torus orbifolds}\label{tororb}
The definition and basic properties of torus orbifolds are extensively discussed in \cite{HM}. 
In this section, we give the definition and the basic construction of locally standard torus orbifold.
All these are appropriate modifications of the quasitoric theory developed in \cite{DJ}, 
which also generalize the arguments of quasitoric orbifolds in \cite{PS}.

We begin by recalling the definition of manifold with corners from \cite[Section 6]{Da}. 
Various properties of manifold with corners and maps between them are studied in \cite{Jo}.

\begin{defn}
(a) A Hausdorff topological space $Q \subset \RR^m$ is called an $n$-dimensional
 \emph{manifold with corners} for $n\le m$ if any point $q \in Q$ has a neighborhood diffeomorphic to an open subset of the positive cone $\RR^n_{\geq 0}=
\{(x_1, \ldots, x_n)\in \RR^n\mid x_1 \geq 0, \ldots, x_n \geq 0\}$.
A $k$-dimensional face of $Q$  is defined in the natural way. 
Codimension-$1$ faces
of $Q$ are called \emph{facets}, and  faces of dimension 0 are called \emph{vertices}. 
We write $\mathcal F(Q)$ (resp. $\mathcal V(Q)$) for the set of facets (resp. vertices) of $Q$.
$Q$ is called \emph{smooth manifold with corners} if all the transition maps are smooth. 

(b) A manifold with corners is said to be  \emph{nice} if every codimension-$2$ face is a connected component of the  intersection of a unique collection of two facets.

(c) Two manifolds with corners are called \emph{diffeomorphic} if there is a diffeomorphism 
between them. 
\end{defn}

From the definition, it is easy to see that  any codimension-$k$ face of an $n$-dimensional nice manifold with corners $Q$ is a connected component of the intersection of a unique set of $k$ many facets of $Q$ for any  $0 \le  k \leq n$.
An $n$-dimensional {\em simple polytope} is a convex polytope each of whose vertices is the intersection of exactly $n$ facets. 
So simple polytopes are nice manifold with corners.  
In this article, we assume that every manifold with corners $Q$  is nice, orientable and smooth unless specifically mentioned otherwise.

For a $G$ space $X$ and an $H$ space $Y$, a map is said to be $\theta$-\emph{equivariant} for a homomorphism $\theta \colon G\to H$
if $f(gx)=\theta(g)f(x)$ for any $g\in G$ and $x\in X$. When we do not need to specify the homomorphism $\theta$, then $f$ is said to
be \emph{weakly equivariant} or {\em weak-equivariant}.

\subsection{Notations}\label{notation}
Let $M$ be a free $\mathbb Z$-module of rank $n$, i.e., $M\cong \mathbb Z^n$.
Let
$$
\begin{array}{lll}
M_{\mathbb R}&:=&M\otimes_{\mathbb Z}\mathbb R \cong \mathbb R^n\\
T_M&:=&M_{\mathbb R} / M \cong (S^1)^n=T^n
\end{array}
$$
For a submodule $K$ of $M$ of rank $k$, let 
$$\widetilde{K}:=K_{\mathbb R}\cap M \cong \mathbb Z^k.$$ 
Then $K$ is submodule of 
$\widetilde{K}$ with finite index, and $\widetilde{K}$ is a rank $k$ submodule of $M$. Moreover the inclusion 
$\iota_K \colon K\hookrightarrow \widetilde{K}$ induces a surjective (covering) homomorphism 
$$\zeta_K \colon T_K\to T_{\widetilde{K}}$$ 
with finite abelian kernel $\ker(\zeta_K)\cong \widetilde{K} / K$  where $T_K=K_{\mathbb R} / K \cong T^k$ and $T_{\widetilde{K}}=\widetilde{K}_{\mathbb R }/ \widetilde{K} \cong T^k$. 
On the other hand, the inclusion  $\iota_{\widetilde{K}} \colon \widetilde{K}\hookrightarrow M$ induces an injective
homomorphism 
$$\zeta_{\widetilde{K}} \colon T_{\widetilde{K}}\to  T_M.$$
If $K$ is of rank $n$, in particular, then $\zeta_K \colon T_K\to T_{\widetilde K}=T_M$ is surjective with the finite abelian kernel.

\subsection{Definition of locally standard torus orbifold and its properties}\label{axdef}
Let $M$ be a free $\mathbb Z$-module of rank $n$, i.e., $M\cong \mathbb Z^n$
as above, or one may assume $M =\mathbb{Z}^n$.
Recall that the standard $T^n$ action on $\mathbb C^n$ is defined to be 
$$(t_1, \ldots, t_n)\cdot (z_1, \ldots, z_n)=(t_1z_1, \ldots, t_nz_n)$$ 
for $(t_1, \ldots, t_n)\in T^n$ and $(z_1, \ldots, z_n)\in \mathbb C^n$.
\begin{defn}\label{axideftororb}
A $2n$-dimensional  connected and closed effective orbifold $\mathcal{X}$ is called a \emph{locally standard torus orbifold} if the underlying topological space $X$ of $\mathcal{X}$ has an effective $T_M$ action such that for every point $x \in X$ 
there exist
\begin{itemize}
\item[P1)] a $T_M$-invariant neighborhood $ U$,
\item[P2)] a submodule  $N$ of $M$   of rank $n$ with the inclusion $\iota \colon N \to M$ 
and the induced surjective (covering) homomorphism $\zeta_N \colon T_N \to T_M$, and
\item[P3)] an orbifold chart $(\widetilde{U}, G, \varphi)$ over $U$  with $G = {\ker} \zeta_N $ where $\widetilde{U}$ is $\delta$-equivariantly diffeomorphic to an open set in 
$\CC^n$  for some isomorphism $\delta \colon  T_N \to T^n$,  and $\varphi \colon
\widetilde{U} \to U$ is a $\zeta_N$-equivariant map which induces an equivariant
 homeomorphism between $\widetilde{U}/G$ and $U$.

\end{itemize}
\end{defn}
If the group $G$ in Definition \ref{axideftororb} is trivial for each $x\in X$,  then the orbifold $X$ is called a \emph{ locally standard torus manifold}. 

\begin{remark}\label{rmk of orbifold}
(1) From P3) each group $G = {\ker} \zeta_N \subset T_N$, hence
the singular set $\Sigma \mathcal X$ of the orbifold is contained in the singular part ${\Sing}(X, T^n)$ of the action
where ${\Sing}(X, T^n)$ is the union all singular $T^n$-orbits in $X$.

\noindent (2) Also from P3), the orbit space $Q=X/T^n$ is an $n$-dimensional
nice smooth  manifold with corners, and $\mathcal X$ is called a locally
standard torus orbifold over $Q$.
\end{remark}


We also remark that even though  the definition of torus manifolds in \cite{HM}
assumes $X$ to have  fixed points,
we do not assume this here. In the case when $Q$ is a simple polytope and each group $G$ is trivial, $X$ is called a \emph{quasitoric manifold}, which  was introduced by Davis and Januszkiewicz \cite{DJ}
by the name of \emph{toric manifold}.  Also if $Q$ is a simple polytope but $G$'s are not necessarily trivial, then  
$X$ is defined to be  a  \emph{quasitoric orbifold} in \cite{PS}, and there the first author and M. Poddar studied several geometric and topological properties of them. 
An orbifold is called a $T^n$-\emph{orbifold} if there is an effective $T^n$-action on the 
underlying space.

\begin{example}\label{egs2n}
Consider the unit sphere
 $$S^{2n} = \{(z_1, \ldots, z_n, x) \in \CC^n \times \RR : |z_1|^2 + \cdots + |z_n|^2 + x^2 =1\}$$
with the following $T^n$-action on $S^{2n}$:
$$(t_1, \ldots, t_n, x) \cdot (z_1, \ldots, z_n, x) \mapsto (t_1 z_1, \ldots,  t_nz_n, x).$$ 
The points $(0, \ldots, 0, -1), (0, \ldots, 0, 1) \in S^{2n}$ are the fixed points of this action. Let $U_1 = S^{2n} -\{(0, \ldots, 0, -1)\}$ and $U_2=S^{2n}-\{(0, \ldots, 0,1)\}$.
One can show that $U_i$ is $\delta_i$-equivariantly diffeomorphic to $\CC^n$ with the standard action of $T^n$ for some $\delta_i \in \Aut(T^n)$ for $i=1, 2$. 
So $S^{2n}$ is a torus manifold. In particular, $S^{2n}$ is a locally standard torus orbifold with the orbit map $$\pi_s \colon S^{2n}\to Q^n_s=S^{2n}/T^n$$ where the
orbit space is given by 
$$Q^n_s = \{(x_1, \ldots, x_n, x) \in \RR^{n+1} ~:~ x_1^2 + \cdots + x_n^2 + x^2 = 1 ~~\mbox{and}~~ x_i \geq 0 ~~ \mbox{for} ~~ i=1, \ldots, n\}.$$ 
From the defining equations we get that $Q_s^n$ is a nice manifold with corners. 
\qed
\end{example}

Let $X$ be a $2n$-dimensional locally standard torus orbifold over $Q= X/T^n$ with the orbit map $\pi \colon X \to Q$.  Let $\mathcal F(Q)=\{F_1, \ldots, F_m\}$ denote the set of  facets of $Q$, and 
let $\overset{\circ}{F}_i$ denote the relative interior of $F_i$ .
By the local characterization of orbifold charts, the isotropy group
of any point $x \in \pi^{-1}(\overset{\circ}{F}_i) \subset X $ is a locally constant
circle subgroup of $T_M$. It is the image of a circle subgroup of $T_N$ under $\zeta_N$.
Thus it determines a locally constant vector $\lambda_i \in M$ up to sign, which is not necessarily a primitive vector of $M$. 
Since $\pi^{-1}(\overset{\circ}{F}_i)$ is connected, 
this vector $\lambda_i$ is uniquely determined up to sign for each facet $F_i$ of $Q$. 
This vector $\lambda_i$ is called the \emph{rational characteristic vector} of $F_i$. 
Thus we have the following  function:
\begin{equation}\label{r-ch-fun-of-X}
\lambda \colon \mathcal F(Q)\to M \cong \mathbb Z^n, \quad F_i\mapsto \lambda_i.
\end{equation}
In general, for an $n$-dimensional nice manifold with corners $P$ we define the following.
\begin{defn}\label{dichar}
A \emph{rational characteristic function} (or simply an \emph{r-characteristic function}) on an
$n$-dimensional manifold with corners  $P$  is a map $\xi \colon \mathcal{F}(P) \to \ZZ^n$ such
that whenever $F_{i_1} \cap \cdots \cap F_{i_k}\ne \emptyset$ the vectors $\xi(F_{i_1}), ...,  \xi(F_{i_k})$ are
linearly independent.
\end{defn}
By P3) of Definition \ref{axideftororb} the function $\lambda \colon \mathcal F(Q)\to \mathbb Z^n$ in (\ref{r-ch-fun-of-X})  is a rational characteristic function on $Q$.

On the other hand, since  every locally standard torus orbifold $X$ is compact, its orbit space $Q$ is a compact nice manifold with corners. So every facet of $Q$ has a collar neighborhood in $Q$, and hence the boundary $\partial{Q}$ has a collar neighborhood in $Q$. 
That is, the complement $Q_c$ of the union of suitable collar neighborhoods of all facets of $Q$ is diffeomorphic to $Q$ preserving the face structures. 
Note that $X_{c}=\pi^{-1}(Q_c)$ is the total space of a principal $T^n$-bundle, denoted by $\tau_c \colon E_{X_c} \to Q_c$. 
Since $Q_c$ is diffeomorphic to $Q$ as a manifold with corners, we may 
pull back $\tau_c$ to $Q$ to get a topological principal $T^n$ bundle
$$ \tau \colon E_X  \to Q$$ 
over $Q$ where $E_X$ is a nice manifold with corners and $\tau$ preserves the face structure.  
Thus, from a $2n$-dimensional locally standard torus orbifold $X$ with the orbit map 
$\pi \colon X \to Q$, we have obtained two data, a rational 
characteristic function
$\lambda \colon \mathcal F (Q)\to \mathbb Z^n$, and a principal $T^n$ bundle $\tau \colon E_X \to Q$.
We write these data by  $\{(Q, \lambda), (E_X, Q, \tau)\}$ and call it the \emph{combinatorial and topological data} of a locally standard torus 
orbifold $X$.

We remark that the definition of rational characteristic function is a slight generalization of the following well-known notion of characteristic function.
\begin{defn}\label{def_char_fun}
The function $\xi$ in Definition \ref{dichar} is called a \emph{characteristic function} if the vectors $\xi(F_{i_1}), ...,  \xi(F_{i_k})$ are a part of a basis of $\mathbb Z^n$ whenever $F_{i_1} \cap \cdots \cap F_{i_k}\ne \emptyset$.
\end{defn}

If $X$ is a locally standard torus manifold (instead of orbifold) over $Q$, then the r-characteristic function $\lambda$ in (\ref{r-ch-fun-of-X}) is indeed a characteristic function.
We finish this subsection with some examples of rational characteristic function.
\begin{example}\label{char_fun}
The manifold with corners in Figure \ref{egc3} (a) is obtained from  the rectangle $V_0V_1V_2V_3$ 
by deleting the interiors of the circle
$C$ and the triangle $V_4V_5V_6$. The manifold with
corners in Figure \ref{egc3} (b) is obtained from  the disk bounded by the  circle $C$ 
by deleting the interior of pentagon $V_0V_1V_2V_3V_4$.
Some r-characteristic functions of these $2$-dimensional manifolds
with corners are defined in the corresponding figure.
\begin{figure}[ht]
        \centerline{
           \scalebox{0.70}{
            \input{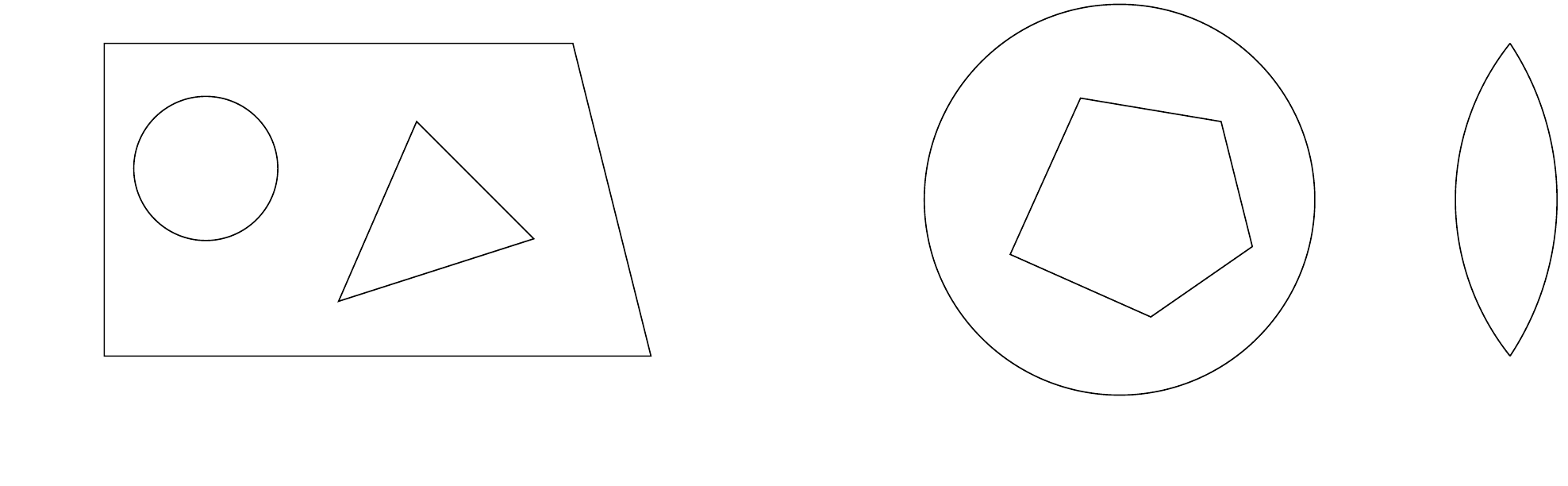_t}
            }
          }
       \caption {Some r-characteristic functions.}
        \label{egc3}
      \end{figure}
      \qed
\end{example}

\begin{example}\label{eye-shape1}
The Figure \ref{egc3} (c) is an {\em eye-shape} $P^2$ with vertices $\{V_0, V_1\}$ and edges $\{E_0, E_1\}$. So an eye-shape is a nice manifold with corners. 
Define $\xi \colon \{E_0, E_1\} \to \ZZ^2 $~ by $$\xi(E_0)=(a,b) ~~ \mbox{, } ~~ \xi(E_1)=(c, d).$$ 
Then $\xi$ is an r-characteristic function if and only if $\{(a,b), (c, d)\}$ is a linearly independent set in $\ZZ^2$.  \qed
\end{example}

\subsection{The basic construction of locally standard torus orbifold}\label{condif}
Let $P$ be an $n$-dimensional nice manifold with corners equipped with a rational characteristic function
$\xi \colon \mathcal F(P)\to \mathbb Z^n$. Moreover, let a topological principal
$T^n$-bundle $\mu \colon E\to P$ over $P$ be given where $E$ is a nice
manifold with corners and $\mu$ preserves the face structure.
Let $\{(P,\xi),  (E, P, \mu)\}$ denote these data, which we call a \emph{combinatorial and topological data} over $P$.
In this subsection, we construct a $2n$-dimensional locally standard torus orbifold $X$ with the orbit space $P$ such that the corresponding combinatorial and topological data of $X$  is the given data  $\{(P,\xi),  (E, P, \mu)\}$.
Each point $x\in P$ lies in the relative interior $\overset{\circ}{F}$ of a unique codimension-$k$ face $F$ of $P$.  
If $k=0$ then $F=P$, and otherwise 
$F$ is a connected component of the intersection $F_{i_1}\cap\cdots\cap F_{i_k}$ of the unique collection
$\{F_{i_1}, \cdots, F_{i_k}\}\subset \mathcal F(P)$ because $P$ is nice.
Let $K(P)=0$, and let  $K(F)$ be the rank $k$ submodule of $M=\mathbb Z^n$ generated by the vectors $\xi(F_{i_1}),\ldots, \xi(F_{i_k})$.
Then as we have seen in Subsection~\ref{notation},  there is a surjective homomorphism
$\zeta_{K(F)} \colon T_{K(F)} \to T_{\widetilde{K}(F)}$ and an injective homomorphism  $\zeta_{\widetilde{K}(F)} \colon T_{\widetilde{K}(F)}\to  T_M$.
Let
\begin{eqnarray} 
G_F&:=& \widetilde{K}(F)/K(F)\label{def of GF},\\
T_F&:=&\img(\zeta_{\widetilde{K}(F)}\circ\zeta_{K(F)}).\label{def of TF}
\end{eqnarray} 
Then $G_F$ is a finite abelian group isomorphic to $\ker (\zeta_{K(F)})$, and $T_F$ is a rank $k$ torus subgroup of $T_M\cong T^n$.
We will adopt the convention that $T_P = 1$.

From the data $\{(P,\xi),  (E, P, \mu)\}$ we construct the space $X(P, \xi, \mu)$ as follow.
Define an equivalence relation $\sim$ on the total space $E$ of the principal bundle $\mu$ by
\begin{equation}\label{equ001}
  x \sim y ~ \mbox{if and only if}~ \mu(x) = \mu(y)~ \mbox{and}~ x=ty ~~ \mbox{for some} ~~ t \in T_F
\end{equation}
where $F$ is the face containing $\mu(x)=\mu(y)$ in its relative interior. The quotient space 
\begin{equation}\label{quotient construction}
X(P, \xi, \mu):=E/\sim
\end{equation}
has a natural $T^n$-action induced by the natural $T^n$-action on $E$. 
The orbit space
of $T^n$-action on $X(P, \xi, \mu)$ is diffeomorphic to $P$ as manifold with corners,  and the map
\begin{equation}\label{orbit map}
\pi \colon X(P, \xi, \mu) \to P, \quad \mbox{given by} \quad [x]^{\sim} \mapsto \mu(x)
\end{equation}
can be regarded as the orbit map, where $[x]^\sim$ denote the equivalence class of $x$.
In the case when $\mu$ is a trivial bundle,  we denote $X(P, \xi, \mu)$ by $X(P, \xi)$.

\begin{lemma}\label{def_const}
The space $X(P, \xi, \mu)$ is a locally standard torus orbifold over $P$ with the orbit map  $\pi$ in {\emph{(\ref{orbit map})}}.
\end{lemma}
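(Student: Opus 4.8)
The plan is to reduce the statement to the verification of the axioms P1)--P3) of Definition~\ref{axideftororb} at each point, since the global requirements---that $X(P,\xi,\mu)$ is Hausdorff, $2n$-dimensional (together with compactness and connectedness inherited from $P$), and carries an effective $T^n$-action with orbit map $\pi$---follow directly from the construction~(\ref{quotient construction}): the $T^n$-action on $E$ descends to $E/\!\sim$ because the relation~(\ref{equ001}) is $T^n$-invariant, and two points of $E$ with the same image under $\mu$ lie in a common orbit of the collapsing torus, so the map $\pi$ of~(\ref{orbit map}) is exactly the quotient by this action. Thus the entire content is local, and I would fix a point $\bar x\in X(P,\xi,\mu)$ with $\pi(\bar x)=p$, where $p$ lies in the relative interior $\overset{\circ}{F}$ of a unique codimension-$k$ face $F$, a component of $F_{i_1}\cap\cdots\cap F_{i_k}$.

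First I would set up the local model. Because $P$ is a nice manifold with corners and $\mu$ preserves the face structure, I can choose a face-structure-preserving neighborhood $W$ of $p$ diffeomorphic to $V\times\RR^k_{\geq 0}$ with $V\subset\RR^{n-k}$ open and the $k$ coordinate hyperplanes corresponding to $F_{i_1},\dots,F_{i_k}$, together with a trivialization $\mu^{-1}(W)\cong W\times T_M$. Then $U:=\pi^{-1}(W)$ is the quotient of $(V\times\RR^k_{\geq 0})\times T_M$ in which, over a point whose vanishing corner coordinates are indexed by $S\subseteq\{1,\dots,k\}$, one collapses the subtorus $T_{F_S}$ generated by $\{\xi(F_{i_j})\}_{j\in S}$. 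For the characteristic data I would use that $\widetilde{K}(F)=K(F)_{\RR}\cap M$ is \emph{saturated} in $M$, so $M/\widetilde{K}(F)$ is free and the inclusion splits as $M=\widetilde{K}(F)\oplus L$ for some rank-$(n-k)$ sublattice $L$. Setting $N:=K(F)\oplus L$, I obtain a rank-$n$ sublattice with $\ker\zeta_N\cong M/N\cong \widetilde{K}(F)/K(F)=G_F$, so the finite group $G:=\ker\zeta_N$ of the chart will be exactly the expected local group.

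Next I would construct the chart. Take $\widetilde U\subset\CC^k\times(\CC^\ast)^{n-k}\cong\CC^n$ to be a $T^n$-invariant open set whose last $n-k$ circle factors act freely, and let $\delta\colon T_N\xrightarrow{\sim}T^n$ be the isomorphism carrying the splitting $T_N=T_{K(F)}\times T_L$ to the standard factors. Using polar coordinates I define $\varphi\colon\widetilde U\to U$ by reading the moduli $(|z_1|,\dots,|z_k|)$ as the corner coordinates in $\RR^k_{\geq 0}$, reading the angular $T_{K(F)}$-coordinates through the covering $\zeta_{K(F)}\colon T_{K(F)}\to T_{\widetilde K(F)}$ followed by $\zeta_{\widetilde K(F)}$, and carrying the free $T_L$-coordinates and $V$ identically. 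By construction $\varphi$ is $\zeta_N$-equivariant, its fibres are exactly the orbits of $G=\ker\zeta_N$ (acting through $\ker\zeta_{K(F)}\subset T_{K(F)}$ on the normal $\CC^k$-factor), and over the subface indexed by $S$ the isotropy subtorus $\prod_{j\in S}S^1\subset T^k$ is sent onto $T_{F_S}$; hence $\varphi$ induces a homeomorphism $\widetilde U/G\cong U$ compatible with~(\ref{equ001}). This realizes P1)--P3) and exhibits $U$ as $\delta$-equivariantly standard, proving local standardness; for $k=0$ the group is trivial and the chart is a smooth piece of the principal bundle.

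Finally I would check that these charts are smooth, mutually locally compatible, and assemble into an effective orbifold atlas. The main obstacle, I expect, is not the single-chart computation but the compatibility: one must verify that on overlaps the transition functions of the bundle $\mu$, together with the two linear choices (the complement $L$ and the isomorphism $\delta$), glue the local uniformizers by $K$-equivariant diffeomorphisms in the sense of Definition~\ref{def_compatibile_orbifold_chart}, and that the induced $T^n$-action is globally effective. I would settle effectiveness by noting that over an interior point the action is free, and handle compatibility by choosing the trivializations of $\mu$ over a common refinement and tracking the inclusions of finite groups $G_{F}\hookrightarrow G_{F'}$ as one passes from a face $F$ to a face $F'$ in its closure, which is where the niceness of $P$ and the linear independence in Definition~\ref{dichar} are essential.
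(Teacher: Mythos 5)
Your proposal is correct and follows essentially the same route as the paper: both arguments are local, split the lattice using the saturation $\widetilde{K}(F)=K(F)_{\RR}\cap M$ (your $N=K(F)\oplus L$ with $\ker\zeta_N\cong G_F$ is exactly the paper's covering $T_{K(F)}\times T_{K(F)^{\perp}}\to T^n$ with kernel $G_F=\widetilde{K}(F)/K(F)$), build the chart over $\CC^k\times(\CC^\ast)^{n-k}$ via polar coordinates, and identify the chart group with $G_F$. Even your deferred treatment of chart compatibility mirrors the paper, which likewise only introduces explicit coordinates and then refers the compatibility and group-action details to \cite[Subsection 2.3]{GP2}.
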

\begin{proof}
 Let $[x]^{\sim} \in X(P, \xi, \mu)$. Hence $\mu(x) \in P$. We show that $[x]^{\sim}$ has a neighborhood which is $T^n$-equivariantly homeomorphic to $V_x/G_x$ where $V_x$ is a $T^n$-invariant open subset of $\CC^{n}$ with the standard $T^n$ action, and $G_x$ is a finite subgroup of $\Diff(V_x)$.
 
First assume that $\mu(x)$ belongs to the interior  $\overset{\circ}{P}$. Then there is a neighborhood $U_x$ of $\mu(x)$ in $P$ which is diffeomorphic to an $n$-dimensional open ball in $\RR^{n}_{> 0}$. Clearly, 
$$\pi^{-1}(U_x) = T^n \times U_x/ \sim \, \cong T^n \times U_x$$
is a $T^n$-invariant neighborhood of $[x]^{\sim}$. Let $V_x = T^n \times U_x$, $G_x=\{1\}$ and $\rho_x \colon V_x \to X(P, \xi, \mu)$ be the inclusion. Then $(V_x, G_x, \rho_x)$ is an orbifold chart which satisfy the condition P3) of Definition \ref{axideftororb}.

Let $\mu(x)$ belong to the relative interior $\overset{\circ}{F}$ of a codimension-$k$ face $F$ of $P$ with $k>0$. 
Then there is a neighborhood $U_x$ of $\mu(x)$ in $P$ such that $U_x$ is diffeomorphic  to 
$ \RR^k_{\geq 0} \times \RR^{n-k}_{>0}$ as manifold with corners and $\mu^{-1}(U_x)$ is 
$T^n$-equivariantly homeomorphic  to $T^n \times U_x$ as manifold with corners. 
Let $$X_x :=\mu^{-1}(U_x)/\sim = \pi^{-1}(U_x).$$ 
Let $F$  be a component of the intersection $ F_{i_1} \cap \cdots\cap F_{i_k}$ of a unique collection of $k$ many facets of $P$. 
By Definition~\ref{dichar}, the set $\{\xi(F_{i_1}), \ldots, \xi(F_{i_k})\}$ is a linearly independent set of vectors of $\ZZ^n$. 
Let $K(F)$ be the submodule generated by  $\{\xi(F_{i_1}), \ldots, \xi(F_{i_k})\}$.

Let $K(F)^{\perp}:=\ZZ^n /\widetilde{K}(F)$, which is a free $\ZZ$-module of rank $n-k$. Then $\ZZ^n$ is isomorphic to $\widetilde{K}(F) \oplus K(F)^\perp$. Fixing an 
isomorphism of these $\ZZ$-modules we get $T^n = T_{\widetilde{K}(F)} \times T_{K(F)^{\perp}}$. On the other hand,  we also have an isomorphism $T^n \cong T_{K(F)} \times T_{K(F)^{\perp}}$ as $ T_{K(F)} $ and $ T_{K(F)^{\perp}}$ are $k$ and $n-k$ dimensional torus
respectively.

For  a face  $F^{\prime}$ containing $F$, let $K(F')$ be the submodule of $K(F)$ generated by the rational characteristic vectors corresponding to the face $F'$,  and 
let $T_{K(F')}$ be the corresponding torus group defined as before.
We now define an equivalence relation $\sim_F$ on $T_{K(F)} \times T_{K(F)^{\perp}} \times U_x $ by
\begin{equation}\label{toequiv}
 (t_1, s_1, q_1) \sim_F (t_2, s_2, q_2) ~
\mbox{if}~ q_1=q_2, s_1 =s_2 ~ \mbox{and} ~ t_{2}^{-1} t_1 \in T_{K(F')}
\end{equation}
 where $F^{\prime}$ is the unique face whose relative interior contains $q_1=q_2$. 
 
 Let 
 $$V_x = (T_{K(F)} \times T_{K(F)^{\perp}} \times U_x ) / \sim_F.$$
Using the aforementioned homeomorphism $U_x \to \RR^k_{\geq 0} \times \RR^{n-k}_{>0}$, we can get that $V_x$ is equivariantly homeomorphic to $\CC^{k} \times (\CC^*)^{n-k}$. Thus we have a commutative diagram of $T^n$-equivariant maps:
\begin{equation}
\begin{CD}
T_{K(F)} \times T_{K(F)^{\perp}} \times U_x  @>{\zeta_{K(F)} \times Id \times Id}>> T_{\widetilde{K}(F)} \times T_{K(F)^{\perp}} \times U_x @. \cong T^n \times U_x @. {\cong} \mu^{-1}(U_x)\\
@VVV  @VVV\\
 V_x @>{\zeta_x}>> X_x,
\end{CD}
\end{equation} 
where the vertical arrows are the quotient maps. Note that the map $\zeta_x$ is the orbit map of $G_F$-action on $V_x$ where $G_F$ is
as defined in (\ref{def of GF}).
Now the triple $(V_x, G_x, \zeta_x)$ with $G_x=G_F$ is an orbifold chart on $X_x$ which satisfies the condition P3) of Definition \ref{axideftororb}. 

Next we show that the orbifold charts $(V_x, G_x, \zeta_x)$'s are compatible
(see Definition \ref{def_compatibile_orbifold_chart}). The arguments are
essentially similar to \cite[Subsection 2.3]{GP2}, but few modifications
are needed. First we give coordinate structure on each $V_x$ in the following. Let
$$f_x \colon U_x \to \bb{R}^k_{\geq 0} \times \bb{R}^{n-k}$$ be the diffiomorphism as
manifold with corners. If we write $$f_x(u)=(f_{x,1}(u), \ldots, f_{x,k}(u),
f_{x, k+1}(u), \ldots, f_{x, n}(u))$$ then $f_{x, j }(u) \geq 0$, 
$f_{x, j}(u)=0$ if and only if $u \in U_x \cap F_{i_j} $ for $j=1, \ldots, k$, and
$f_{x, j}(u) > 0 $ for $j= k+1, \ldots, n$ ($k\geq 0$). Let $\{\bar{\xi}_{k+1},
\ldots, \bar{\xi}_{n}\}$ be a $\ZZ$-basis for $K(F)^{\perp} \subset \ZZ^n$.
When $F = Q$, we may consider $K(F) =0 \in \ZZ^n$. Then the kernel
of the map determined by the matrix $$\Lambda_x =[\xi_{i_1}^t, \ldots, \xi_{i_k}^t,
 \bar{\xi}_{k+1}^t, \ldots, \bar{\xi}_{n}^t]$$ is $G_F$. Let ${\bf \mathfrak{a}} = 
(\alpha_1, \ldots, \alpha_n)$ be the standard angular coordinates, and 
${\bf \mathfrak{a}}_x = (\alpha_{x,1}, \ldots, \alpha_{x,n})$ be the angular
coordinates of $T^n$ with respect to the basis  $\{\xi_{i_1}^t, \ldots,
\xi_{i_k}^t, \bar{\xi}_{k+1}^t, \ldots, \bar{\xi}_{n}^t\}$
of $~~~\ZZ^n \tensor_{\ZZ} \RR$. Then we get the following transformation 
between these angular coordinates $${\bf \mathfrak{a}} = \Lambda_x {\bf \mathfrak{a}}_x.$$
Let $u_{x,j} := f_{x, j}(u)\cos (2 \pi \alpha_{x, j})$
and $w_{x,j} := f_{x, j}(u)\sin (2 \pi \alpha_{x, j})$ for $j=1, \ldots, n$. 
We define $\phi_x \colon V_x \to \RR^{2n}$ by $$\phi_{x}([{\bf \mathfrak{a}}_x, u]^{\sim_F})
=(u_{x,1}, w_{x,1}, \ldots, u_{x,n}, w_{x,n}).$$
From the identification $\sim_{F}$ and condition on $f_x$ and using a
property of quotient map, we get that $\phi_x$ is homeomorphic onto its image
which is an open subset of $\RR^{2n}$. Then 
$(u_{x_1}, w_{x, 1}, \ldots, u_{x, n}, w_{x, n})$ can be taken as coordinate
structure on $V_x$.

The group action of $G_F$ on $V_x$ can be given by
similar relation as in \cite[(2.10)]{GP2}. Compatiblity of charts can be 
explained following the arguments in \cite[Subsection 2.3]{GP2}. Therefore $X(P, \xi, \mu)$ is
a locally standard torus orbifold.
\end{proof}

\begin{remark}\label{rem_tor_mf}
(1)  If  $P$ and the principal $T^n$ bundle $\mu$  in Lemma~\ref{def_const} are orientable, then so is $X(P,\xi,\mu)$.

(2) Observe that if $\xi$ satisfies the condition in Definition \ref{def_char_fun} then all local groups in the above orbifold charts are trivial. So in this case, 
$X(P, \xi, \mu)$ is a locally standard torus manifold.
\end{remark}

\begin{prop}\label{prop_eqi_diffeo}
Let $X$ and $ Y$ be locally standard torus orbifolds over $P$ and $Q$ respectively such that
the following diagram commutes
$$
\begin{CD}
X  @>{f}>> Y \\
@V{\pi_X}VV  @VV{\pi_Y}V\\
P @>{g}>> Q,
\end{CD}
$$
where $f$ is equvariantly homeomorphic, $\pi_X, \pi_Y$ are orbit maps,
and $g$ is a diffiemorphic as manifold with corners. Then $f$ is
equvariantly diffeomorphic.
\end{prop}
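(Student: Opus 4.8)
The plan is to upgrade the equivariant homeomorphism $f$ to an equivariant diffeomorphism by working chart-by-chart, exploiting that $g$ is a diffeomorphism of manifolds with corners and that the orbifold charts on $X$ and $Y$ are the explicit ones produced in the proof of Lemma~\ref{def_const}. Since being equivariantly diffeomorphic requires smooth orbifold maps in both directions, and since $f^{-1}$ is again an equivariant homeomorphism covering the diffeomorphism $g^{-1}$, it suffices to show that $f$ admits a smooth lift in suitable orbifold charts around every point; the same argument applied to $f^{-1}$ then furnishes the smooth inverse. So first I would fix $[x]^\sim \in X$ with $p = \pi_X(x) \in P$ lying in the relative interior of a codimension-$k$ face $F$.

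Next I would locate the target chart. Because $g$ is a diffeomorphism of nice manifolds with corners, it preserves depth, so $g$ carries $F$ to a codimension-$k$ face $F' := g(F)$ of $Q$ and $g(p)$ lies in $\mathring{F'}$. I would then take the orbifold charts $(V_x, G_x, \zeta_x)$ on $X_x = \pi_X^{-1}(U_x)$ and $(V_y, G_y, \zeta_y)$ on $\pi_Y^{-1}(g(U_x))$ built in Lemma~\ref{def_const}, together with their explicit coordinate maps $\phi_x, \phi_y$ into $\RR^{2n}$. Equivariance of $f$ forces the isotropy groups to agree, $T_F = T_{F'}$ as subtori of $T^n$, and since $f$ is an orbifold homeomorphism the local groups are isomorphic, $G_x = G_F \cong G_{F'} = G_y$. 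Using these identifications I would lift $f|_{X_x}$ to a $G$-equivariant homeomorphism $\widetilde{f} \colon V_x \to V_y$ satisfying $\zeta_y \circ \widetilde{f} = f \circ \zeta_x$.

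The heart of the matter is to show that $\widetilde{f}$ is smooth in the coordinates $\phi_x, \phi_y$. Writing $z_{x,j} = f_{x,j}(u)\,e^{2\pi i \alpha_{x,j}}$ for the complex coordinates on $V_x$ and $z_{y,\ell} = f_{y,\ell}(g(u))\,e^{2\pi i \alpha_{y,\ell}}$ on $V_y$, the radial parts of $\widetilde{f}$ are smooth because each $f_{y,\ell} \circ g$ is a composition of smooth maps. For the angular parts I would use equivariance: it pins down the dependence on the $\alpha_{x,j}$ to be the monomial map determined by the matrix relating $T_F$ and $T_{F'}$ (itself smooth in the $z_{x,j}$), leaving only a fibrewise translation by a map $U_x \to T^n$. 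The hard part will be verifying that this residual translation is smooth and not merely continuous; I expect to handle it by identifying it with the transition function between the smooth trivializations of the principal bundles underlying $X$ and $g^* Y$ over the top stratum, which is smooth because $g$ and the bundle structures are, and then checking that smoothness persists across the lower strata using the explicit form of $\phi_x, \phi_y$.

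Finally I would assemble the local conclusions: each $\widetilde{f}$ is a $G$-equivariant diffeomorphism onto its image compatible with the $\zeta$'s, so $f$ is a smooth orbifold map; running the identical argument for $f^{-1}$ over $g^{-1}$ produces a smooth orbifold inverse. Hence $f$ is an equivariant diffeomorphism of orbifolds. The only genuinely delicate point is the smoothness of the fibre-direction (gauge) part of $\widetilde{f}$; everything else is bookkeeping that follows the compatibility-of-charts argument already used for Lemma~\ref{def_const} and Definition~\ref{def_compatibile_orbifold_chart}.
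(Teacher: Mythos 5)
Your strategy coincides with the paper's own (very terse) proof---the paper simply says to use the coordinate description of the charts $(V_x, G_x, \zeta_x)$ from Lemma~\ref{def_const} and to adapt the arguments of \cite[Lemma 2.3]{GP2}---so the chart-by-chart framework, the radial/angular splitting, and the reduction to the fibre direction are all in the spirit of what the authors intend. However, the step you yourself single out as the crux fails, and with it the proof. In a local smooth trivialization over the top stratum, equivariance only forces $f$ to have the form $(t,p)\mapsto (t\cdot h(p),\, g(p))$ with $h\colon U_x\to T^n$ merely continuous; your plan is to identify $h$ with ``the transition function between the smooth trivializations of the principal bundles underlying $X$ and $g^*Y$,'' but the trivialization of $g^*Y$ that $f$ produces from a smooth trivialization of $X$ is only a \emph{topological} trivialization, so comparing it with a smooth trivialization of $Y$ returns the continuous map $h$ itself. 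The identification is circular and yields no smoothness.

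Moreover, no argument can close this gap, because the literal claim you set out to prove (that the given $f$ is itself smooth) is false: take $X=Y$ over $P=Q$, $g=\mathrm{id}$, and let $f$ be the fibrewise translation $[e]\mapsto \rho(\pi_X([e]))\cdot [e]$ by a continuous, nowhere-smooth map $\rho\colon P\to T^n$ supported in a small disk in the interior of $P$ where the action is free. This is an equivariant orbifold homeomorphism covering a diffeomorphism of the bases, yet it is not smooth. What is provable---and is all that the paper actually uses in Theorem~\ref{clasi} and Theorem~\ref{thm_equiv}---is the \emph{existence} of an equivariant diffeomorphism $X\to Y$ covering $g$. For that one constructs a new smooth map from the data rather than smoothing $f$: the map $f$ exhibits a topological isomorphism $E_X\cong g^*E_Y$ of principal $T^n$-bundles over the complement of collar neighborhoods; since topologically isomorphic smooth principal torus bundles are smoothly isomorphic (equivalently, one may replace the gauge map $h$ by a smooth map in its homotopy class), there is a smooth $T^n$-bundle isomorphism covering $g$, and this descends through the quotient construction of Lemma~\ref{def_const} to an equivariant map which your coordinate computations (now applied to a genuinely smooth bundle map) show is a diffeomorphism of orbifolds. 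If you recast your last two steps in that form---deform/replace $f$ rather than lift it---the proof goes through; as written, it does not.
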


\begin{proof}
Using the coordinate description on each orbifold chart $(V_x, G_x, \zeta_x)$
in the proof of Lemma \ref{def_const} and modifying the arguments of the
proof of \cite[Lemma 2.3]{GP2}, one can complete the proof.
\end{proof}

\begin{example}\label{eye-shape2}
Let $P^2$ be an eye-shape and $\xi$ be an r-characteristic function on $P^2$ as in Example~\ref{eye-shape1}, and let $\mu$ be the trivial $T^2$-bundle over $P^2$. 
So $X(P^2, \xi)$ is a 4-dimensional orientable locally standard torus orbifold. 
Now we show that $X(P^2, \xi)$ is the orbit space of a finite group action on $S^4$. 
Let $Q^2_s$ be the manifold with corners as in Example~\ref{egs2n}.
Let $F_i = \{(x_1, x_2, x) \in Q^2_s ~\mid ~ x_i = 0\}$ for $i=1, 2$. 
Note that $\pi_s^{-1}(Q_s^2) =S^4\subset \mathbb C^2\times \mathbb R$ and  the isotropy group of the points in $\pi_s^{-1} (\overset{\circ}{F_i})$ is
$$
\begin{array}{lll}
Iso(\overset{\circ}{F_i})&=& \{(t_1, t_2)\in T^2 ~ \mid ~ (t_1z_1, t_2x_2, x) =(z_1, z_2, x)\}\\
				  &=&\begin{cases} S^1\times 1, &i=1\\
				                              1\times S^1, & i=2.
				        \end{cases}
\end{array}
$$				                            

Therefore the corresponding r-characteristic function is given by $$\lambda_s(F_1)=(1, 0)
~~\mbox{and}~~ \lambda_s(F_2)=(0, 1)$$ up to  choices of sign. Then one can show
that $(T^2 \times Q_s^2)/\sim ~ \cong S^4=\pi_s^{-1}(Q_s^2) $ where $\sim$ is
the equivalence relation defined in (\ref{equ001}). 
Let $f \colon Q_s^2 \to P^2$ be a diffeomorphism as manifold with corners such that $f(F_i)=E_i$ for $i=1, 2$. Consider the map $\ZZ^2 \to \ZZ^2$ determined by $$(1,0) \to (a,b) ~~\mbox{and}~~ (0,1) \to (c, d).$$ This induces a surjective Lie group homomorphism $\phi \colon T^2  \twoheadrightarrow T^2$, that is, $\phi$ is a finite covering homomorphism. From the definition of the equivalence relation $\sim$ it is clear that the map
 $$\phi \times \mbox{id} \colon T^{2} \times Q^2_s \to T^{2} \times P^2$$
induces a surjective map $$f_{\phi} \colon S^{4} \twoheadrightarrow X(P^2, \xi)$$
defined by $f_{\phi}([t,x]^{\sim}) = [{\phi}(t), f(x)]^{\sim}$ on the equivalence classes. 
The finite group $\ker{\phi}$ acts naturally on $S^{4}$. Since $\phi$ is a covering homomorphism with the finite covering group $\ker{\phi}$, the space $X(P^2, \xi)$ is 
diffeomorphic to the quotient space $S^{4}/\ker{\phi}$.
In particular, if $\xi$ is a characteristic function (see  Definition \ref{def_char_fun}), then $X(P^2, \xi)$ is $T^2$-weakly equivariantly diffeomorphic to $S^4$. \qed
\end{example}

\begin{example}\label{disc1}
Consider $S^3 =\{(z_1, z_2) \in \CC^2 : |z_1|^2 + |z_2|^2 = 1\}$ with an $S^1$-action on $S^3$ defined by 
\begin{equation}\label{s1_action_ons3}
\alpha \cdot (z_1, z_2) \to (z_1, \alpha z_2).
\end{equation}
 Then the orbit space  $P$ is a closed $2$-disc.
Let $\xi \colon \{\partial{P}\} \to \ZZ^2$ be the  map defined by $\xi(\partial{P}) = (a, b)$ where $(a, b)$ is a primitive vector in $\ZZ^2$. 
Then $\xi$ is a characteristic map on $P$.
So by Lemma \ref{def_const} and Remark \ref{rem_tor_mf}, $X(P, \xi)$ is an orientable locally standard torus manifold. Since $(a, b)$ is a primitive vector there is $(c, d) \in \ZZ^2$ such that $ad - bc =1$. So applying an automorphism of $T^2$ we may assume $(c, d) = (1, 0)$ and $(a, b) = (0, 1)$. Therefore we have the following
$$X(P, \xi) \cong (T^2 \times P)/\sim ~ \cong S^1 \times (S^1 \times P)/\sim ~ \cong S^1 \times S^3.$$ 
Here $T^2$ acts on $S^1\times S^3$ as follows: the first $S^1$ factor of $T^2$ acts on $S^1$ by left multiplication, and the second $S^1$ factor  acts on $S^3$ as in \eqref{s1_action_ons3}. Thus $X(P,\xi)$ is weakly equivariantly diffeomorphic to $S^1\times S^3$.
\begin{figure}[ht]
        \centerline{
           \scalebox{0.70}{
            \input{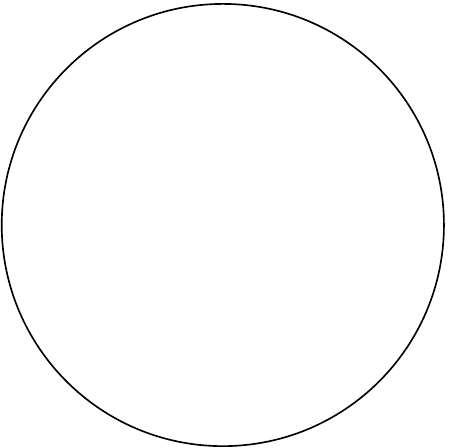_t}
            }
          }
       \caption {A characteristic function on a closed disc.}
        \label{egc4}
      \end{figure}
\qed
\end{example}

The following theorem shows that any locally standard torus orbifolds can be constructed from the basic construction explained at the beginning of this subsection.

\begin{theorem}\label{clasi}
Let $X$ be a $2n$-dimensional locally standard torus orbifold over $Q$ with the associated   combinatorial and topological data $\{(Q, \lambda), (E_X, Q, \tau)\}$ as in Subsection \ref{axdef}. Let  $X(Q, \lambda, \tau)$ be the the locally standard torus orbifold
obtained  by the basic  construction from the data as in Subsection~\ref{condif}.
Then there  is a $T^n$-equivariant orbifold diffeomorphism from $X(Q, \lambda, \tau)$ to $X$ covering the identity on $Q$.
\end{theorem}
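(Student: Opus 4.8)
The plan is to produce a $T^n$-equivariant homeomorphism $f \colon X(Q,\lambda,\tau) \to X$ covering the identity on $Q$, and then to invoke Proposition~\ref{prop_eqi_diffeo} (applied with $P=Q$ and $g=\mathrm{id}_Q$) to upgrade it automatically to an equivariant orbifold diffeomorphism. The construction should be essentially tautological, since $X(Q,\lambda,\tau)$ is assembled from the very data $\{(Q,\lambda),(E_X,Q,\tau)\}$ that was extracted from $X$ in Subsection~\ref{axdef}; the real work lies in checking that the local identifications are mutually compatible. Throughout I would follow the reconstruction argument of \cite{DJ}, modified to the orbifold setting exactly as in Lemma~\ref{def_const}.

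First I would treat the free part. By construction $\tau \colon E_X \to Q$ is the pullback of the principal bundle $\tau_c \colon E_{X_c} \to Q_c$ along a face-preserving diffeomorphism $Q \xrightarrow{\cong} Q_c$ coming from the collar, where $X_c = \pi^{-1}(Q_c) = E_{X_c} \subset X$. Hence there is a canonical $T^n$-equivariant bundle map $h \colon E_X \to X_c \subset X$. Choosing the collar diffeomorphism to be the terminal stage of a face-preserving isotopy of $Q$ that is the identity outside the collars, I can arrange that $h$ identifies $\tau$ over the free locus with $\pi_X^{-1}(\overset{\circ}{Q})$ while covering a map isotopic to $\mathrm{id}_Q$; after reparametrizing by this isotopy I regard $h$ as covering $\mathrm{id}_Q$ there. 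Since the equivalence relation $\sim$ of $(\ref{equ001})$ is trivial over $\overset{\circ}{Q}$ (where $T_Q=1$ by convention), the restriction of $X(Q,\lambda,\tau)$ to $\overset{\circ}{Q}$ is exactly $E_X|_{\overset{\circ}{Q}}$, and $h$ descends there to an equivariant homeomorphism onto the free locus $\pi_X^{-1}(\overset{\circ}{Q})$ of $X$.

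Next I would extend over the faces using local charts. For $q$ in the relative interior of a codimension-$k$ face $F$, the local model of $X(Q,\lambda,\tau)$ given in the proof of Lemma~\ref{def_const} is $(T_{K(F)} \times T_{K(F)^{\perp}} \times U_q)/\sim_F$, equivariantly homeomorphic to an open subset of $\CC^k \times (\CC^*)^{n-k}$ modulo $G_F$; by P3) of Definition~\ref{axideftororb} the corresponding piece $\pi_X^{-1}(U_q)$ of $X$ carries the same standard model, because the isotropy data along $F$ is in both cases governed by the submodule $K(F)$ generated by $\lambda(F_{i_1}),\dots,\lambda(F_{i_k})$, with the same finite group $G_F$ and torus $T_F$. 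This yields, for each $q$, a local $T^n$-equivariant homeomorphism $f_q$ over $U_q$ that I would build so as to restrict to $h$ over the free locus of $U_q$.

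Finally I would glue. Since the free locus is dense in each $U_q$ and in each overlap $U_q \cap U_{q'}$, and both $f_q$ and $f_{q'}$ restrict there to the single global identification $h$, the local maps agree on a dense set and hence everywhere; they therefore patch to a global $T^n$-equivariant homeomorphism $f \colon X(Q,\lambda,\tau) \to X$ covering $\mathrm{id}_Q$, and Proposition~\ref{prop_eqi_diffeo} promotes $f$ to an equivariant orbifold diffeomorphism. I expect the construction of each $f_q$ to be the main obstacle: one must choose the chart models of Lemma~\ref{def_const} and of P3) so that the singular directions are simultaneously matched by the common data $K(F)$ and $G_F$ while the free directions are matched to the trivialization of $\tau$ underlying $h$. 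Reconciling these two requirements over the orbifold charts — verifying that the transition functions of $\tau$ are compatible with the $G_F$-quotient structure along the singular strata — is exactly the point at which the \cite{DJ} reconstruction must be carried out in the orbifold category.
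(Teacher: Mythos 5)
Your skeleton agrees with the paper's at its endpoints: extract the data, aim for a $T^n$-equivariant homeomorphism $X(Q,\lambda,\tau)\to X$ covering $\mathrm{id}_Q$, and invoke Proposition~\ref{prop_eqi_diffeo} to upgrade it to a diffeomorphism. The gap is in the middle, and it is exactly the step you yourself defer as ``the main obstacle'': the existence of the local extensions $f_q$ of your free-part identification $h$. Your $h$ is produced by homotopy-theoretic bundle arguments (the tautological pullback identification over $Q_c$, spread over $\overset{\circ}{Q}$ by a lifted isotopy), and such a construction gives no control whatsoever on the behavior of $h$ as one approaches $\pi_X^{-1}(\partial Q)$. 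Concretely, over a chart $U_q$ meeting a face $F$, both sides do carry the same local model, so \emph{some} equivariant homeomorphism $\phi_q$ over $U_q$ exists; but the discrepancy between $\phi_q$ and $h$ on the free part of $U_q$ is a gauge transformation given by a map $g\colon U_q\cap\overset{\circ}{Q}\to T^n$, and your $f_q$ exists if and only if the composite of $g$ with the projection $T^n\to T^n/T_F$ extends continuously across $F$ (with extra finite-group subtleties from $G_F$ in the orbifold case). This is not automatic: already for $n=2$, $Q=\RR\times[0,\infty)$, $\lambda(F)=(1,0)$, the gauge $g(x,r)=(1,e^{i/r})$ defines an equivariant self-homeomorphism of the free part $T^2\times\RR\times(0,\infty)$ covering the identity which extends over no neighborhood of the face. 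Nothing in your outline rules out that $h$ is of this kind, so the patching argument (which is otherwise fine --- density of the free locus does force agreement on overlaps, and a continuous equivariant bijection from the compact space $X(Q,\lambda,\tau)$ to $X$ is a homeomorphism) never gets off the ground. Producing an $h$ with prescribed boundary behavior, or correcting $h$ by a global gauge so that it extends, is the actual content of the theorem.

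The paper resolves this by never fixing a free-part identification first: it constructs the comparison map globally, with its behavior along the singular strata controlled by construction. Since $X$ is only an orbifold, it first passes to the frame bundle ${\Fr}(X)$ (Theorem~\ref{global quotient orbifold}), a genuine smooth manifold carrying commuting effective $T^n$- and almost-free $O(2n)$-actions, and then performs the blow-up of the singular strata of the $T^n$-action as in Lemma 1.4 of \cite{DJ}, producing $\widetilde{\Fr}(X)$ on which both actions are free, together with a collapse map $\widetilde{\Fr}(X)\to{\Fr}(X)$. Dividing by $O(2n)$ gives a free $T^n$-space $\widetilde{X}\cong E_X$ and a continuous equivariant map $f\colon E_X\to X$ which, by construction, identifies over each face precisely the $T_F$-orbits in the fibers; hence $f$ factors through $E_X/\sim\,=X(Q,\lambda,\tau)$ and induces the desired equivariant homeomorphism covering $\mathrm{id}_Q$, after which Proposition~\ref{prop_eqi_diffeo} finishes as in your outline. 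That blow-up/collapse construction (or an equivalent inductive equivariant-collar argument over the strata) is the ingredient missing from your proposal; note also that the frame bundle is how the paper reduces the orbifold complication to a manifold problem, whereas your outline stays inside the orbifold charts and would additionally have to track the local groups $G_F$ through the gauge analysis.
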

\begin{proof}
The proof is similar to the quasitoric orbifold case in \cite{PS}.
The basic idea of the proof is similar to that of Proposition 1.8 in \cite{DJ}, however there are two complications which require 
further arguments. The first one is that the orbit space $Q$ is not necessarily contractible, and the second one
is that we are dealing with orbifolds instead of manifolds.
The former complication can be fixed by considering principal $T^n$ bundle $\tau \colon E_X\to Q$ instead of the trivial $T^n$ bundle $T^n\times Q\to Q$.
So we need to take care of the latter complication. 
The main point is how to extend Lemma 1.4 of \cite{DJ} to the locally standard torus orbifold case, i.e., 
we need to construct a continuous map $$f \colon E_X\to X$$  which maps $\tau^{-1}(q)$ surjectively onto $\pi^{-1}(q)$ for each $q\in Q$, where $\pi \colon X\to Q$ is the orbit map.

Let ${\Fr}(X)$ be the frame bundle of the effective orbifold $X$. Then by Theorem~\ref{global quotient orbifold}
the frame bundle ${\Fr}(X)$ is a smooth manifold with  smooth effective almost-free $O(2n)$-action, and the quotient orbifold ${\Fr}(X)/O(2n)$ is diffeomorphic to $X$.
On the other hand,  since $X$ is a locally standard torus orbifold, $X$ has an effective
 $T^n$-action which induces an effective  $T^n$-action on ${\Fr}(X)$  commuting with
the above mentioned $O(2n)$-action.

We now apply the procedure for \lq\lq blowing up the singular strata" of the $T^n$-action on ${\Fr}(X)$ as explained in Lemma 1.4 of \cite{DJ}, to obtain an effective
$T^n$-manifold $\widetilde{\Fr}(X)$ with corners with only principal orbits. 
Indeed, $\widetilde{\Fr}(X)$ is obtained from ${\Fr}(X)$ by replacing each singular stratum by its normal sphere bundle in the order from the minimal stratum to the higher ones.
Then there is a natural map $$\widetilde{f} \colon \widetilde{\Fr}(X)\to {\Fr}(X)$$ which collapses each sphere bundle to the base points.
The $O(2n)$-action on ${\Fr}(X)$ also induces a free $O(2n)$-action on $\widetilde{\Fr}(X)$ because by Remark~\ref{rmk of orbifold}
the singular set $\Sigma X$ of the orbifold $X$ is contained in the singular part
 ${\Sing}(X, T^n)$ of the $T^n$-action on $X$.
Furthermore $T^n$ acts freely on $\widetilde{\Fr}(X)$ and commute with the action of $O(2n)$.
Let $\widetilde{X}:=\widetilde{\Fr}(X)/O(2n)$. Then $\widetilde{X}$ is a free $T^n$-space with the orbit space equal to $Q$.
Indeed, $\widetilde{X}$ is $T^n$-equivariantly homeomorphic to the total space $E_X$
of a principal bundle $\tau \colon E_X\to Q$.
The natural surjective map $\widetilde{f}$ induces a continuous $T^n$-equivariant
 map $$f \colon \widetilde{X} \cong E_X \to  {\Fr}(X)/O(2n)\cong X.$$
Since $O(2n)$-action commutes with $T^n$-action on $\widetilde{\Fr}(X)$ and ${\Fr}(X)$,
if $y \in f^{-1}(x) \subset E_X$ for some $x \in X$ then isotropy of $y$ is same
as $T_F$ where $F$ is the smallest face containing $\tau(y)$.
Therefore the map $f$ factor through the continuous map $E_X/ \sim ~ \to X$. 
 This is a $T^n$-equivariant  homeomorphism $X(Q, \lambda, \tau) \to X$ 
 covering the identity on $Q$. Then Proposition \ref{prop_eqi_diffeo}
 completes the proof. 
\end{proof}

We remark that blowing up of singular stratum may not be unique, but the
above procedure suffices our requirement.  

\begin{defn}\label{def_equiv}
Let $\{(P, \xi), (E, P, \mu)\}$ and $\{(P', \xi^{\prime}), (E^{\prime}, P', \mu^{\prime})\}$ be two combinatorial and topological data. 
They are called {\em equivalent} if there is a diffeomorphism
$\psi \colon P \to P'$ (as manifold with corners) and a $\delta\in\Aut(\ZZ^n)$ such that $\xi^{\prime}(\psi(F))= \pm\delta( \xi(F))$ for each $F \in \mathcal{F}(P)$ and $\mu$ is isomorphic to the pull back bundle $\psi^{\ast}(\mu^{\prime})$. 
\end{defn}

\begin{theorem}\label{thm_equiv}
Two locally standard torus orbifolds $X(P, \xi, \mu)$ and $X(P', \xi^{\prime}, \mu^{\prime})$ are $T^n$-weakly equivariantly diffeomorphic if and only if the corresponding characteristic and topological data $\{(P, \xi), (E, P, \mu))$ and $\{(P', \xi^{\prime}), (E^{\prime}, P', \mu^{\prime})\}$ are equivalent. 
\end{theorem}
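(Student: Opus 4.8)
The plan is to prove the two implications separately, using throughout the identification $\Aut(T^n)\cong GL_n(\ZZ)=\Aut(\ZZ^n)$, so that a weak-equivariance automorphism $\theta$ of $T^n$ is induced by a unique $\delta\in\Aut(\ZZ^n)$, which we write $\theta=\bar\delta$. The crucial combinatorial consequence of the relation $\xi'(\psi(F))=\pm\delta(\xi(F))$ is that the generated submodules match, $K'(\psi(F))=\delta(K(F))$ (signs being irrelevant to the generated submodule), whence by (\ref{def of TF}) the identification subtori satisfy $\bar\delta(T_F)=T'_{\psi(F)}$ for every face $F$ of $P$. This is precisely the compatibility that makes the two equivalence relations of type (\ref{equ001}) correspond.

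For the direction ($\Leftarrow$), assume the data are equivalent via $(\psi,\delta)$ together with the bundle isomorphism $\mu\cong\psi^*\mu'$; I take this isomorphism to be $\bar\delta$-equivariant, i.e. a map $\Phi\colon E\to E'$ with $\mu'\circ\Phi=\psi\circ\mu$ and $\Phi(te)=\bar\delta(t)\Phi(e)$ (equivalently, a $T^n$-bundle isomorphism after twisting the $T^n$-action on $E$ by $\delta$; this is the reading of Definition \ref{def_equiv} compatible with weak-equivariance). Since $\psi$ carries the relative interior of $F$ onto that of $\psi(F)$ and $\bar\delta(T_F)=T'_{\psi(F)}$, the map $\Phi$ respects (\ref{equ001}): if $x=ty$ with $t\in T_F$ and $\mu(x)=\mu(y)$, then $\Phi(x)=\bar\delta(t)\Phi(y)$ with $\bar\delta(t)\in T'_{\psi(F)}$ and $\mu'\Phi(x)=\mu'\Phi(y)$ in the interior of $\psi(F)$. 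Hence $\Phi$ descends to a continuous $\bar\delta$-equivariant bijection $f\colon X(P,\xi,\mu)\to X(P',\xi',\mu')$ covering $\psi$, which is a homeomorphism because $\Phi$ is. Finally, re-equipping the source with the $\bar\delta$-twisted $T^n$-action makes $f$ strictly equivariant over the diffeomorphism $\psi$, so Proposition \ref{prop_eqi_diffeo} upgrades $f$ to a weakly equivariant diffeomorphism.

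For the direction ($\Rightarrow$), start with a $\bar\delta$-equivariant diffeomorphism $f\colon X(P,\xi,\mu)\to X(P',\xi',\mu')$. Since $f$ intertwines the $T^n$-actions via $\bar\delta$, it carries orbits to orbits and descends to a homeomorphism of orbit spaces $\psi\colon P\to P'$; as these are smooth manifolds with corners and $f$ is an orbifold diffeomorphism, $\psi$ is a diffeomorphism preserving the face structure, in particular sending facets to facets. To recover the characteristic data, note that over the relative interior of a facet $F_i$ the local behaviour of $X(P,\xi,\mu)$ is governed by the isotropy circle $T_{F_i}$ together with the local group $G_{F_i}=\widetilde K(F_i)/K(F_i)$; a $\bar\delta$-equivariant orbifold diffeomorphism preserves both the underlying isotropy circle and the order of the local group, so it matches the data over $F_i$ with that over $\psi(F_i)$, forcing $\widetilde K'(\psi(F_i))=\delta(\widetilde K(F_i))$ and, by equality of the orders of the local groups, $\xi'(\psi(F_i))=\pm\delta(\xi(F_i))$. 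Restricting $f$ to the free part $\pi^{-1}(Q_c)$ (the complement of collars of all facets, where the action is free) then gives a $\bar\delta$-equivariant isomorphism of principal $T^n$-bundles, which after the $\delta$-twist exhibits $\mu\cong\psi^*\mu'$. Thus the data are equivalent.

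The main obstacle is the honest treatment of the principal-bundle factor in tandem with the automorphism $\delta$. In ($\Leftarrow$) one must ensure the bundle isomorphism is genuinely $\bar\delta$-equivariant rather than a plain isomorphism $E\cong\psi^*E'$, since twisting the $T^n$-action by $\delta$ alters the classifying class in $H^2(P;\ZZ^n)$ by $\delta$; this is exactly the compatibility implicit in Definition \ref{def_equiv}. In ($\Rightarrow$) the dual difficulty is to recover the \emph{non-primitive} vectors $\xi(F_i)$ rather than just the isotropy circles, which only detect the primitive directions $\widetilde K(F_i)$; this is precisely where the orbifold (as opposed to merely topological) structure enters, through the order of each local group $G_{F_i}$. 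The remaining steps---descent of $\Phi$ and of $f$ to the quotients, and the passage from homeomorphism to diffeomorphism via Proposition \ref{prop_eqi_diffeo}---are routine once these compatibilities are established.
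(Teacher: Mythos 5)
Your proposal is correct and takes essentially the same route as the paper's proof: both directions proceed by passing between the total spaces $E$, $E'$ and the quotients, descending the bundle map through the identifications, and invoking Proposition \ref{prop_eqi_diffeo} to upgrade the resulting equivariant homeomorphism to a diffeomorphism. The two points you elaborate --- recovering the non-primitive vectors $\xi(F_i)$ up to sign from the isotropy circle together with the order of the local group $G_{F_i}$, and insisting that the bundle isomorphism $\mu \cong \psi^{\ast}(\mu')$ be read as $\bar\delta$-twisted so that it descends through the relations of type (\ref{equ001}) --- are exactly the details the paper's terse argument leaves implicit, and you fill them in correctly.
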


\begin{proof}
Let $\Psi \colon X(P, \xi, \mu) \to X(P', \xi^{\prime}, \mu^{\prime})$ be a $T^n$-weakly equivariant diffeomorphism. Then it induces a diffeomorphism $\psi \colon P \to P'$ (as manifold with corners). This follows from the facts that $T^n$-action on each is
locally standard and the orbit map is smooth. Thus we have the following commutative diagrams, 
\begin{equation}\label{comp}
\begin{CD}
X(P, \xi, \mu) @>{\Psi}>> X(P', \xi^{\prime}, \mu^{\prime})\\
@VV{\pi}V  @VV{\pi^{\prime}}V\\
 P @>{\psi}>> P'
\end{CD}
\quad 
\mbox{and}
\quad
\begin{CD}
E @>{\Psi}>> E^{\prime}\\
@VV{\tau}V  @VV{\tau^{\prime}}V\\
 P_c @>{\psi}>> P'_c,
\end{CD}
\end{equation} 
where $\pi, \pi^{\prime}$ are orbit maps, 
$P_c$ (resp. $P'_c$) is the complement of a suitable  collor neighborhood of $\partial{P}$ (resp. $\partial{P'}$) in $P$ (resp. $P'$), $E=\pi^{-1}(P_c)$ and $E^{\prime} = (\pi^{\prime})^{-1}(P'_c)$.
Note that $P_c$ (resp. $P'_c$) is diffeomorphic to $P$ (resp. $P'$) as manifold with corners. Therefore $\xi^{\prime}(\psi(F))= \pm \delta( \xi(F))$  for a fixed $\delta\in Aut(\ZZ^n)$ and for all $F \in \mathcal{F}(P)$, and $\mu \cong \psi^{\ast}(\mu^{\prime})$.

Conversely, assume two  data $\{(P, \xi), (E, P, \mu)\}$ and $\{(P', \xi^{\prime}), (E^{\prime}, P', \mu^{\prime})\}$ are equivalent. So $\xi^{\prime}(\psi(F))= \pm \delta(\xi(F))$ for a fixed $\delta\in \Aut(\ZZ^n)$ and for all $F \in \mathcal{F}(P)$, and $\mu \cong \psi^{\ast}(\mu^{\prime})$ for some diffeomorphism $\psi \colon P\to P'$ as manifold with corners. So there is a bundle isomorphism  $\widetilde{\psi} \colon E \to E^{\prime}$ such that 
\begin{equation}
\begin{CD}
E @>{\widetilde{\psi}}>> E^{\prime}\\
@VV{\tau}V  @VV{\tau^{\prime}}V\\
 P_c @>{\psi}>> P'_c.
\end{CD}
\end{equation} 
Since $\widetilde{\psi}$ is $T^n$-equivariant and $\xi^{\prime}(\psi(F))= \pm \delta(\xi(F))$ for all $F \in \mathcal{F}(P)$, the map
$\widetilde{\psi}$ descends to a $T^n$-weakly equivariant map $\Psi \colon X(P, \xi, \mu) \to X(P, \xi^{\prime}, \mu^{\prime})$. 
Using the construction of $X(P, \xi, \mu)$ and $X(P, \xi^{\prime}, \mu^{\prime})$
 one can show that $\Psi$ is a homeomorphism. So by Proposition \ref{prop_eqi_diffeo}
 $\Psi$ is a diffeomorphism.
\end{proof}

We remark that the above two theorems are proved for the category of quasitoric manifolds in \cite{DJ} and for the category of quasitoric orbifolds in \cite{PS}. Also similar result are discussed for the category of $2$-torus manifolds in \cite{LM}. One can replace
diffeomorphism/diffeomorphic by homeomorphism/homeomorphic in Definition \ref{def_equiv}
and Theorem \ref{thm_equiv} and retain the conclusion of Theorem \ref{thm_equiv}
in homeomorphic category.


\subsection{Equivariant connected sum}\label{csum}
Here, we discuss connected sum of orientable locally standard torus manifolds along an orbit as in \cite{GK}. 
Let $A$ (resp. $B$) be an orbit of a $2n$-dimensional locally standard torus manifold $M$  (resp. $N$). 
Assume that the isotropy group of $A$ is isomorphic to that of $B$. 
So $A$ (resp. $B$) is a subset of a connected component of 
$M_{i_1} \cap \cdots \cap M_{i_\ell}$ (resp. $N_{i_1} \cap \cdots \cap N_{i_\ell}$) 
for a unique collection of characteristic submanifolds
$\{M_{i_1}, \ldots, M_{i_\ell}\}$ of $M$ (resp.  $\{N_{i_1}, \cdots, N_{i_\ell}\}$ of $N$), where 
a \emph{characteristic submanifold} of a locally standard torus manifold is the inverse image  of a facet by the orbit map.
Since $T^n$-action is locally standard, there are $T^n$-invariant small enough neighborhoods $U_A$ (resp. $U_B$)
of $A$ (resp. $B$) such that $U_A$ and $U_B$ are weak-equivariantly diffeomorphic to $\CC^\ell \times (\CC^{\ast})^{n-\ell}$. 
By changing the action of $T^n$ on $N$ by an automorphism of $T^n$ if necessary, we may assume that $T^n$-actions on
$U_A$ and $U_B$ are equivalent. That is, we may assume the isotropy group of $\overset{\circ}{M}_{i_j}$ is same as that of 
$\overset{\circ}{N}_{i_j}$ for $j = 1, \ldots, \ell$. By identifying the boundary of $M - U_A$ and $ N - U_B$ via an orientation
reversing equivariant diffeomorphism we get a manifold, denoted by $M \#_{A, B} N$, with a natural
locally standard $T^n$-action. So $M \#_{A, B} N$ is an oriented locally standard torus manifolds. 
For simplicity we denote the equivariant connected sum by $M \# N$ when the orbits $A$ and  $B$ are clear.
If $A, B$ are orbits of dimension $n$, then we say they are principal orbits and the space $M \# N$ is a connected sum along principal orbits.

Note that we can perform the equivariant connected sum construction for locally standard torus orbifolds along the orbits which belong to the smooth part of the orbifolds.

Let $Q$ be a nice 2-dimensional manifold with corners. 
So every component of $\partial Q$ is either boundary of a polygon, a circle, or an eye-shape  in Figure~\ref{egc3}(c). 
Note that $Q$ can be obtained from a closed surface $S_Q$, by removing the interior  of finitely many non-intersecting polygons,  
eye-shapes, or  discs. 
See Figure \ref{egc3} (a) for an example.

In the following lemma, let $S^4$ be the $T^2$-sphere in Example~\ref{egs2n}, and $S^1\times S^3$ be the $T^2$-manifold in
Example~\ref{disc1}.

\begin{lemma}\label{equi_classi}
Let $M$ be an orientable locally standard torus manifold (resp. orbifold) over a $2$-dimensional nice manifold  with corners  $Q$ such that $\partial{Q} \neq \emptyset$. 
Then $M$ is $T^2$-weakly equivariantly diffeomorphic to a connected sum of several copies  of 4-dimensional
quasitoric manifolds (resp. orbifolds), $T^2\times S_Q$, $S^4$   (resp.  $S^4/G$ as in Example \ref{eye-shape2}), and $S^1\times S^3$.

\end{lemma}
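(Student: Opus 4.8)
The plan is to reduce the lemma to the classification Theorem \ref{thm_equiv} by decomposing the orbit space $Q$ into the closed surface $S_Q$ and its caps, and realizing the induced decomposition of $M$ as an iterated equivariant connected sum along principal orbits. First I would record that, since $M$ is connected and $\partial Q\neq\emptyset$, the orbit space $Q$ is a connected compact nice $2$-manifold with corners that deformation retracts onto a graph; hence $H^2(Q;\ZZ^2)=0$ and the principal $T^2$-bundle $\tau$ occurring in the data of $M$ is trivial. By Theorem \ref{clasi}, $M$ is then weakly equivariantly diffeomorphic to $X(Q,\lambda)$, so it suffices to reconstruct the pair $(Q,\lambda)$ by connected sums.

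Following the remark preceding the lemma, I would write $Q=S_Q\setminus\bigsqcup_{i=1}^{s}\mathrm{int}(R_i)$, where the disjoint caps $R_i$ are polygons, eye-shapes, or discs, one for each boundary component of $Q$. Capping each boundary component exhibits $Q$ as an interior connected sum $Q\cong S_Q\,\#\,R_1\,\#\,\cdots\,\#\,R_s$ of nice manifolds with corners: the genera add up to $\mathrm{genus}(S_Q)$ and the boundary components, together with their corner structures, are exactly the $\partial R_i$. On the torus level I would set $N:=(T^2\times S_Q)\,\#\,B_1\,\#\,\cdots\,\#\,B_s$, where the connected sums are taken along principal (free) orbits and $B_i:=X(R_i,\lambda|_{R_i})$ is the building block over the cap $R_i$.

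The key point is that equivariant connected sum along a principal orbit realizes interior connected sum of orbit spaces while leaving the combinatorial data unchanged: a principal orbit has a $T^2\times D^2$ neighborhood, so the operation merely removes an interior disc from each orbit space and reglues along $T^2\times S^1$ over the smooth part, touching neither the facets and their r-characteristic vectors nor the (trivial) bundles. Consequently $N$ is a locally standard torus orbifold with orbit space $S_Q\,\#\,R_1\,\#\,\cdots\,\#\,R_s=Q$, with r-characteristic function empty on the $S_Q$-part and equal to $\lambda|_{R_i}$ on each $R_i$, i.e.\ exactly $\lambda$, and with trivial bundle (again since $\partial Q\neq\emptyset$). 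By Theorem \ref{thm_equiv} the data of $N$ and of $M=X(Q,\lambda)$ agree, so $N$ is weakly equivariantly diffeomorphic to $M$. It remains to identify the summands: a polygon cap gives a $4$-dimensional quasitoric manifold (resp.\ orbifold) by definition, an eye-shape cap gives $S^4$ (resp.\ $S^4/G$) by Examples \ref{egs2n} and \ref{eye-shape2}, and a disc cap gives $S^1\times S^3$ by Example \ref{disc1}; in the manifold case $\lambda$ is a genuine characteristic function and these are honest manifolds, while in the orbifold case only linear independence is available, which is precisely what those examples permit.

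I expect the main obstacle to be the careful local-to-global verification of the previous paragraph, namely that connected sum along a principal orbit induces interior connected sum of orbit spaces and preserves both $\lambda$ and the triviality of the bundle; the analogous statements are established for quasitoric manifolds in \cite{DJ} and for quasitoric orbifolds in \cite{PS}, and once this bookkeeping is in place the classification theorem supplies the required diffeomorphism with no further work.
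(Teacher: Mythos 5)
Your proposal is correct and follows essentially the same route as the paper's proof: triviality of the bundle via $H^2(Q)=0$, decomposition of $Q$ into $S_Q$ and its polygon/eye-shape/disc caps, reconstruction of $M$ as an equivariant connected sum of $X(Q_i,\lambda|_{Q_i})$ and $T^2\times S_Q$ along principal orbits, and identification of the data via Theorem \ref{thm_equiv} followed by recognizing the summands through Examples \ref{eye-shape2} and \ref{disc1}. The only difference is cosmetic: you make explicit the bookkeeping (that connected sum along a principal orbit induces interior connected sum of orbit spaces preserving $\lambda$ and bundle triviality) which the paper leaves implicit.
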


\begin{proof} We only prove the orbifold case. The manifold case is similar with $G$'s
are trivial groups. Let $\lambda$ be the r-characteristic function associated to $M$.
Since $Q$ is $2$-dimensional manifold with corners
with $\partial{Q} \neq \emptyset$, then $H^2(Q)=0$.
Hence  the principal $T^2$-bundle $\tau \colon E_M \to Q$ associated to $M$ is trivial.

Suppose $Q$ is obtained from $S_Q$ by removing  the interiors of copies of non-intersecting polygons $Q_{1_1}, \ldots, Q_{1_r}$, eye-shapes $Q_{2_1}, \ldots, Q_{2_s}$ and discs $Q_{3_1}, \ldots, Q_{3_t}$.  
For simplicity, we assume that  there is only one copy of $Q_i = Q_{i_1}$ for each $i=1,2,3$.
Note that  the facets of $Q$ is 
$\mathcal F(Q)=\bigcup_{i=1}^3\mathcal F(Q_i)$.
Now define  r-characteristic functions  $\lambda_i \colon \mathcal F(Q_i)\to \mathbb  Z^2$ on $Q_i$ 
to be the restriction $\lambda |_{\mathcal F(Q_i)}$  for each  $i=1,2,3$.

Let $$M^{\prime} = X(Q_1, \lambda_1) \# X(Q_2, \lambda_2) \# X(Q_3, \lambda_3) \# (T^2 \times S_Q)$$ be the connected sum of the following 4-dimensional locally standard torus orbifolds 
$$X(Q_1, \lambda_1), \quad X(Q_2, \lambda_2), \quad X(Q_3, \lambda_3) \quad \mbox{and} \quad  (T^2 \times S_Q)$$ where all  connected sums are performed along principal orbits. 
So $M^{\prime}$ is a locally standard torus manifold. 
Observe that the orbit space $Q^{\prime}$ of $M^{\prime}$ is  the connected sum of $Q_1, Q_2, Q_3$ and $S_Q$ at their respective interior points. 
So $Q^{\prime}$ is diffeomorphic as manifold with corners to $Q$. 
The r-characteristic function $\lambda^{\prime}$ associated to $M^{\prime}$ is induced by $\lambda_1, \lambda_2$ and $\lambda_3$. 
The r-characteristic functions on $Q$ and $Q^{\prime}$, and the trivial principal bundles on them satisfy the conditions of Definition \ref{def_equiv}. 
Therefore by Theorem \ref{thm_equiv},  $M \cong X(Q, \xi)$ is $T^2$-weakly equivariantly diffeomorphic to $X(Q^{\prime}, \lambda^{\prime}) \cong M^{\prime}$. 
By Example \ref{eye-shape2} and \ref{disc1}, the manifolds $X(Q_2, \lambda_2)$
respectively $X(Q_3, \lambda_3)$ are $T^2$-weakly equivariantly diffeomorphic to
$S^4/G$ respectively $S^1 \times S^3$. Also the manifold $X(Q_1, \lambda_1)$
 is a 4-dimensional quasitoric orbifold by \cite{PS}. This proves the  lemma.
\end{proof}

\subsection{Orbifold complex  projective space}\label{gwps}

A toric variety $X_{\Sigma}$ associated to a simplicial fan $\Sigma$ is called
a \emph{ toric orbifold}. The space $X_{\Sigma}$ is compact $2n$-dimensional toric
variety if and only if $\Sigma$ is complete fan in $\mathbb{R}^n$. It is well-known
that if one consider real torus $T^n \subset (C^*)^n$ action then it is a torus
orbifold. More studies on toric varieties can be found in \cite{Ful, CK}. 

\begin{defn}
Let $\Sigma$ be a complete simplicial fan in $\RR^n$ with $n+1$ many $1$-dimensional cones. The associated
toric orbifold $X_{\Sigma}$ is called an \emph{orbifold complex   projective space} of real dimension $2n$. 

\end{defn}
\begin{lemma}[Lemma 3.9, \cite{Sar3}]\label{cops}
 Let $X$ be a quasitoric orbifold over an $n$-dimensional simplex. Then $X$ is equivariantly diffeomorphic to an
 orbifold complex  projective space of real dimension $2n$.
\end{lemma}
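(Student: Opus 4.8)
The plan is to attach a complete simplicial fan $\Sigma$ with exactly $n+1$ rays to the quasitoric orbifold $X$, to recognize $X_\Sigma$ as an orbifold complex projective space, and then to identify $X$ with $X_\Sigma$ by matching combinatorial data. Write $\Delta^n$ for the $n$-simplex, with facets $F_0, \ldots, F_n$ and vertices $v_i = \bigcap_{j \neq i} F_j$, and let $\lambda_i = \xi(F_i) \in \ZZ^n$ be the rational characteristic vectors of $X$. Since each vertex $v_i$ is the intersection of the $n$ facets $\{F_j : j \neq i\}$, Definition \ref{dichar} forces $\{\lambda_j : j \neq i\}$ to be linearly independent for every $i$; hence \emph{every} $n$-element subset of $\{\lambda_0, \ldots, \lambda_n\}$ is a basis of $\ZZ^n \tensor \RR$.

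First I would record the relevant linear algebra. The $n \times (n+1)$ matrix with columns $\lambda_0, \ldots, \lambda_n$ has rank $n$, so its kernel is one-dimensional, giving a relation $\sum_{i=0}^n a_i \lambda_i = 0$ unique up to scaling. If some $a_i$ vanished, the remaining $\lambda_j$ would be dependent, contradicting the previous paragraph; hence all $a_i \neq 0$. Because the rational characteristic vector of a facet is only well-defined up to sign, I am free to replace $\lambda_i$ by $-\lambda_i$ whenever $a_i < 0$, which flips the sign of the corresponding coefficient. After these replacements I obtain a relation $\sum_i a_i \lambda_i = 0$ with all $a_i > 0$; equivalently, the origin lies in the interior of $\mathrm{conv}(\lambda_0, \ldots, \lambda_n)$.

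Next I would build the fan. Set $\rho_i = \RR_{\geq 0}\lambda_i$ and $\sigma_i = \Span_{\geq 0}\{\lambda_j : j \neq i\}$, and let $\Sigma$ consist of the $\sigma_i$ together with all their faces. Each $\sigma_i$ is simplicial since its generators form a basis, and the positivity of the relation $\sum_i a_i \lambda_i = 0$ is precisely the condition ensuring that the $\sigma_i$ cover $\RR^n$ and meet only along common faces; thus $\Sigma$ is a complete simplicial fan with exactly the $n+1$ rays $\rho_0, \ldots, \rho_n$. (Concretely, $\Sigma$ is the image of the normal fan of the standard simplex under the linear isomorphism carrying its ray generators to the $\lambda_i$, so $\Sigma$ is in particular polytopal.) By the definition in Subsection \ref{gwps}, the associated toric orbifold $X_\Sigma$ is an orbifold complex projective space of real dimension $2n$.

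Finally I would reconcile the two pictures. The compact torus $T^n$ acts on $X_\Sigma$ making it a locally standard torus orbifold whose orbit space has face poset dual to $\Sigma$; since $\Sigma$ has $n+1$ rays and $n+1$ maximal cones, this orbit space is combinatorially an $n$-simplex, and the characteristic vector attached to the facet corresponding to $\rho_i$ is exactly $\lambda_i$. Thus $X_\Sigma$ is a quasitoric orbifold over $\Delta^n$ with the same rational characteristic function $\xi$ as $X$, and because $\Delta^n$ is contractible the principal $T^n$-bundles on both sides are trivial, so the full combinatorial and topological data coincide. Invoking the classification of quasitoric orbifolds from \cite{PS} (equivalently Theorem \ref{thm_equiv} in the trivial-bundle case) then yields a $T^n$-equivariant diffeomorphism $X \cong X_\Sigma$. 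The step I expect to be the main obstacle is this last reconciliation of the algebraic object $X_\Sigma$ with the topological quasitoric construction, namely checking that its orbit space is the simplex and that the ray data reproduce $\xi$, since this is where the toric-variety and characteristic-function viewpoints must be matched precisely.
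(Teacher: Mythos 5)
The paper does not prove Lemma \ref{cops} at all: it is imported verbatim from [Lemma 3.9, \cite{Sar3}], so your argument can only be judged on its own merits. Your core construction is the natural one and is correct as far as it goes. Since every vertex of $\Delta^n$ lies on exactly $n$ facets, every $n$-element subset of $\{\lambda_0,\ldots,\lambda_n\}$ is linearly independent; the kernel of the $n\times(n+1)$ matrix is spanned by a vector with all entries nonzero; flipping signs of the $\lambda_i$ is harmless because the subgroups $T_F$ in the construction of Subsection \ref{condif} depend only on the submodules $K(F)$, so you may assume $\sum_i a_i\lambda_i=0$ with all $a_i>0$; and then the cones $\sigma_i$ and their faces form a complete simplicial fan with $n+1$ rays. (Your parenthetical justification is off: a linear isomorphism cannot carry $n+1$ prescribed vectors onto $n+1$ prescribed vectors, so $\Sigma$ is not in general the image of the normal fan of the standard simplex; but completeness follows because $\Sigma$ is the face fan of $\mathrm{conv}(\lambda_0,\ldots,\lambda_n)$, which contains the origin in its interior, so this slip is cosmetic.)

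The genuine gap is exactly the step you flagged: the assertion that ``the characteristic vector attached to the facet corresponding to $\rho_i$ is exactly $\lambda_i$.'' A fan records only cones, and the toric variety $X_\Sigma$ with its standard orbifold structure (charts $\CC^n/G_\sigma$ determined by the lattice together with the \emph{primitive} ray generators) has trivial local group at generic points of each invariant divisor; the rational characteristic function one reads off from $X_\Sigma$ is therefore $F_i\mapsto\lambda_i^{\mathrm{prim}}$, not $F_i\mapsto\lambda_i$. When some $\lambda_i$ is non-primitive this is a real failure, not bookkeeping: for $n=1$ the quasitoric orbifold over $\Delta^1$ with $\xi=(1,a)$, $a>1$, is the teardrop $W\PP(1,a)$, which has a $\ZZ/a$ cone point, whereas the unique complete fan in $\RR$ with two rays yields $\PP^1$, a manifold, and the two are not diffeomorphic as orbifolds since orbifold diffeomorphisms preserve local groups. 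So under the literal definition of orbifold complex projective space in Subsection \ref{gwps}, your final identification, and indeed the lemma itself, would be false. What is needed (and what is implicitly required to make the paper's own claim that teardrops are orbifold complex projective spaces consistent) is to regard the fan as equipped with the marked, possibly non-primitive, lattice vectors $\lambda_i$ on its rays and to give $X_\Sigma$ the orbifold structure defined by those markings, via a Cox-type quotient as in \cite{PS}; with that convention $X_\Sigma$ is by construction the quasitoric orbifold $X(\Delta^n,\xi)$, and your appeal to Theorem \ref{thm_equiv} then closes the argument. In short: right strategy, correct fan construction, but the last identification needs the marked-fan formalism and is false as stated for the bare toric variety.
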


We remark that a fake weighted projective space is a holomorphic generalization of weighted projective space, see \cite{Ka}.
A fake weighted projective space of real dimension $2n$  is determined by a complete simplicial fan generated by
$(n+1)$ many primitive vectors in $\ZZ^n$. So a fake weighted projective space
is an orbifold complex  projective space. Since the primitive vectors in $\ZZ$ are $-1 $ and $ 1$, the teardrop
$W\mathbb{P}(1, a)$ is not a fake weighted projective space if $a > 1$ but an orbifold
complex  projective space.

\subsection{Orbifold Hirzebruch surface}\label{orb_hir_sur}
A Hirzebruch surface is a nonsingular toric variety corresponding to a complete fan given by the Figure \ref{egc5} (A) where $b \in \ZZ$. Note that a Hirzebruch surface is a manifold. For more details on Hirzebruch surface see \cite{Hir, OR}. 
An \emph{ orbifold Hirzebruch surface} $X$ is defined to be a toric variety corresponding to a complete simplicial fan given by  
Figure~ \ref{egc5} (B) where $\{(a_i, b_i), (a_{i+1}, b_{i+1})\}$ are linearly independent vectors in $\ZZ^2$ for $i=1, \ldots, 4$ and $(a_1, b_1) = (a_5, b_5)$. 
One can show that $X$ with the restricted action of the compact torus $T^2\subset (\mathbb C^\ast)^2$ satisfies the condition of Definition \ref{axideftororb}. 
Therefore  orbifold Hirzebruch surfaces are quasitoric orbifolds.
\begin{figure}[ht]
        \centerline{
           \scalebox{0.70}{
            \input{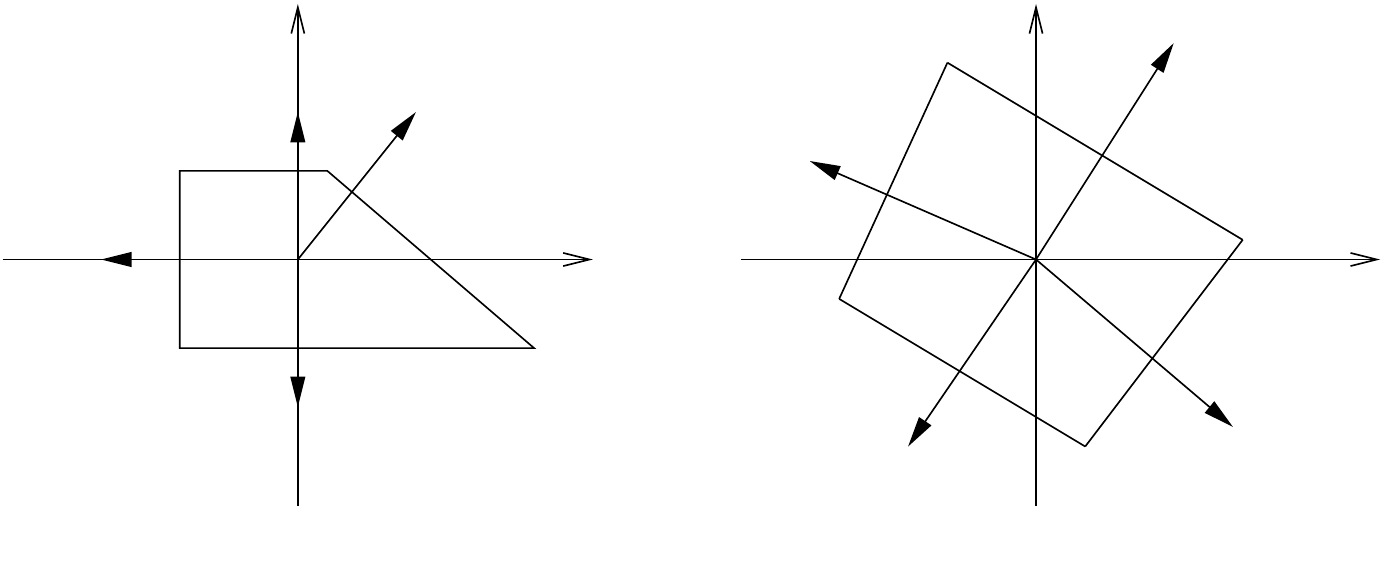_t}
            }
          }
       \caption {Fans for Hirzebruch surface and  Hirzebruch orbifold.}
        \label{egc5}
      \end{figure}

Consider a rectangle $P^2$ with vertices $V_0, \ldots, V_3$ and edges
$V_0V_1, V_1V_2, V_2V_3, V_0V_3$. Define a map $$\xi \colon \{V_0V_1, V_1V_2,
V_2V_3, V_3V_0\} \to \ZZ^2$$ by $\xi(V_{i} V_{i+1}) = \epsilon_i (a_{i+1},
b_{i+1})$ where $i =0, \ldots, 3$, $V_4 = V_0$ and $\epsilon_i =\pm 1$. Then by
\cite[Lemma 2.2]{PS} and Theorem \ref{clasi} the  orbifold Hirzebruch surface
$X$ is $T^2$-equivariantly diffeomorphic to $X(P^2, \xi)$. 


\section{Construction of orbifolds with boundary}\label{def}
From now on we assume all locally standard torus orbifolds and manifolds are orientable.
In this section we construct $(2n+1)$-dimensional orientable effective orbifolds  with $T^n$-actions whose boundaries are  locally standard torus orbifolds. To do this we need the notions of face-simple manifold with marked facets and rational super characteristic function defined on facets of it.

\begin{defn}
A \emph{face-simple manifold with marked facets}   is an $(n+1)$-dimensional oriented compact manifold with corners
$Y$ together with a subset $\{P_1, \ldots, P_m\}\subset\mathcal F(Y)$ of disjoint  facets of $Y$, called the \emph{marked facets}, such that 
\begin{enumerate}
\item $Y$ is nice, i.e., any codimension-$k$ face of $Y$ is a connected component of the intersection of a unique set of $k$ many facets of $Y$ for any  $1 \le  k \leq n+1$, and
\item the vertex set  $\mathcal V(Y)$ is equal to  $ \bigsqcup_{i=1}^m  \mathcal V(P_i)$. 
\end{enumerate}
Face-simple manifold $Y$ with marked facets $P_1,\ldots, P_m$ is denoted by $Y[P_1,\ldots, P_m]$.
The facets   $\mathcal{F}(Y)\backslash \{P_1, \ldots, P_m\}$ is called the \emph{remaining facets} of $Y[P_1, \ldots P_m]$.
\end{defn}

Recall that the \emph{vertex-cut} $\VC(P)$ of a polytope $P$ is the polytope obtained from $P$ by cutting off disjoint  cone-shape neighborhoods $U_{v}$  of all vertices $v$ of $P$. 
For each vertex $v$ of $P$ let $F_v$ denote the facet of $\VC(P)$ corresponding to the intersection $\bar{U_v}\cap  \VC(P)$.

An $(n+1)$-dimensional polytope $P$ is said to be {\em edge-simple} if each one dimensional face of $P$ is the intersection of exactly $n$ facets of $P$, see \cite{Sar2}.
Clearly, the vertex cut $\VC(P)$  of an edge-simple polytope $P$ 
 is a face-simple manifold with marked  facets $\{F_v\mid  v\in \mathcal V(P)\}$. 
 
\begin{defn}\label{isofun}
Let $Y[  P_1, \ldots, P_m]$ be an $(n+1)$-dimensional face-simple manifold with marked facets,
and let  $\{F_1, \ldots, F_{m^{\prime}}\}$ be the remaining facets. 
A function 
$$ \eta \colon \{F_1, \ldots, F_{m^{\prime}}\} \to \ZZ^{n}$$
is called a \emph{rational super characteristic function }  (simply an \emph{rs-characteristic function}) on $Y[ P_1, \ldots, P_m]$ if the set of vectors $ \eta(F_{i_1}), \ldots,$ $ \eta(F_{i_{k}})$ are linearly independent in $\ZZ^{n}$ whenever $ F_{i_1}\cap \cdots \cap F_{i_k}\ne\emptyset$. 
The vector $ \eta(F_i)$ is called an \emph{rs-characteristic vector}  assigned to the facet $F_i$ for $i=1, \ldots, m^{\prime}$.
When the vectors $ \eta(F_{i_1}), \ldots,$ $ \eta(F_{i_{k}})$ are part of a basis of  $\ZZ^{n}$ 
whenever $ F_{i_1}\cap \cdots \cap F_{i_k} \ne\emptyset$, then $\eta$ is called  a \emph{super characteristic function} (simply an \emph{s-characteristic function}).
\end{defn}

Note that for r-characteristic function in Definition~\ref{dichar} the dimension
of manifold with corners and the rank of the target module are same, while
for rs-characteristic function the dimension of manifold with corners is larger
than the rank of the target module by $1$. Definition \ref{isofun} is a
generalization of the isotropy function of an edge-simple polytope given in \cite{Sar2}. 
In \cite{SS}, the authors considered hyper-characteristic functions, and in this case the dimension of the manifold is less than the rank of the target module by $1$.

Let $ \eta \colon \{F_1, \ldots, F_{m^{\prime}}\} \to \ZZ^{n}$ be an
rs-characteristic function on an $(n+1)$-dimensional face-simple manifold
with marked facets  $Y[  P_1, \ldots, P_m]$. For each marked facet $P_j$
let $\mathcal F(P_j):=\{G_{j_1}, \ldots, G_{j_{h_j}}\}$ be its facets.
Then  for each facet $G_{j_i}$ of $P_j$, there exists a unique facet
$F_{j_i}$  among the remaining facets of $Y[  P_1, \ldots, P_m]$ such that
$G_{j_i}=P_j\cap F_{j_i}$. We now define  functions
\begin{equation} \label{hr to r}
\xi_j \colon \{G_{j_1}, \ldots, G_{j_{h_j}}\} \to \ZZ^n \quad \mbox{by} \quad \xi_j(G_{j_i}) = \eta({F_{j_i}}).
\end{equation}
for $j=1,\ldots, m$.
Then the following lemma is obvious from the definition of rs-characteristic function.

\begin{lemma}\label{lemma:hr to r}
The function $\xi_j$ defined in (\ref{hr to r}) is an r-characteristic function on $P_j$ for $j=1,\ldots, m$. 
\end{lemma}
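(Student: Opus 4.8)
The plan is to verify Definition~\ref{dichar} directly for each $\xi_j$, reducing the required linear independence of the $\xi_j$-vectors on $P_j$ to the rs-characteristic property of $\eta$ on $Y$. Since $P_j$ is a facet of the $(n+1)$-dimensional nice manifold with corners $Y$, it is itself an $n$-dimensional manifold with corners, and the target $\ZZ^n$ has the matching rank, so asking whether $\xi_j$ is an r-characteristic function is the correct question.

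First I would check that $\xi_j$ is defined on every facet of $P_j$. Each facet $G$ of $P_j$ is a codimension-$2$ face of $Y$, so by niceness $G$ is a connected component of $P_j\cap F$ for a unique facet $F\ne P_j$ of $Y$. Because the marked facets $P_1,\ldots,P_m$ are pairwise disjoint, $F$ cannot be one of them, hence $F$ is a remaining facet and $\eta(F)$ is defined; this $F$ is exactly the facet $F_{j_i}$ and $\xi_j(G)=\eta(F)$ is the value appearing in \eqref{hr to r}. In particular $G\subseteq F$, an inclusion that will drive the whole argument.

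The core step is then immediate. Suppose $G_{j_{i_1}}\cap\cdots\cap G_{j_{i_k}}\ne\emptyset$ for distinct facets of $P_j$, and let $F_{j_{i_1}},\ldots,F_{j_{i_k}}$ be the corresponding remaining facets. Since $G_{j_{i_\ell}}\subseteq F_{j_{i_\ell}}$ for each $\ell$, any point of the left-hand intersection also lies in $F_{j_{i_1}}\cap\cdots\cap F_{j_{i_k}}$, so this intersection of remaining facets of $Y$ is nonempty. Moreover these facets are distinct: if $F_{j_{i_a}}=F_{j_{i_b}}$ for some $a\ne b$, then $G_{j_{i_a}}$ and $G_{j_{i_b}}$ would be connected components of the one intersection $P_j\cap F_{j_{i_a}}$, hence either equal (contradicting the distinctness of the chosen facets) or disjoint (contradicting the nonempty common intersection). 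The rs-characteristic property of $\eta$ (Definition~\ref{isofun}) therefore guarantees that $\eta(F_{j_{i_1}}),\ldots,\eta(F_{j_{i_k}})$ are linearly independent in $\ZZ^n$, and since $\xi_j(G_{j_{i_\ell}})=\eta(F_{j_{i_\ell}})$ by definition, the vectors $\xi_j(G_{j_{i_1}}),\ldots,\xi_j(G_{j_{i_k}})$ are linearly independent, which is precisely the defining condition for $\xi_j$ to be an r-characteristic function on $P_j$.

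I do not expect a genuine obstacle here; the statement is essentially a bookkeeping consequence of the definitions, which is why the paper calls it obvious. The only points demanding care are the two structural facts supplied by the hypotheses: the niceness of $Y$, which guarantees that each facet of $P_j$ comes from a single well-defined ambient facet $F$ and underlies both the well-definedness of $\xi_j$ and the distinctness of the $F_{j_{i_\ell}}$ inside a nonempty intersection; and the disjointness of the marked facets, which ensures that this ambient facet is always a remaining one, so that $\eta$---and hence $\xi_j$---is defined on all of $\mathcal{F}(P_j)$.
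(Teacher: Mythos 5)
Your proof is correct and takes exactly the route the paper intends: the paper states the lemma as an immediate consequence of Definition~\ref{isofun} and offers no written argument, and yours is simply that consequence spelled out (nonemptiness of intersections of the $G_{j_{i_\ell}}$ transfers to the ambient remaining facets via $G_{j_{i_\ell}}\subseteq F_{j_{i_\ell}}$, and then the rs-characteristic property applies). The two details you make explicit---that disjointness of the marked facets forces each ambient facet $F_{j_i}$ to be a remaining one, and that the $F_{j_{i_\ell}}$ meeting at a common point are pairwise distinct---are precisely the bookkeeping the paper treats as obvious.
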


Let  $\mu \colon E\to Y$  be an orientable principal $T^n$-bundle over $Y$, and 
let $\eta$ be an rs-characteristic function on $Y[P_1,\ldots,P_m]$. Then the collection
$\{(Y[P_1,\ldots,P_m], \eta), (E, Y, \mu)\}$ is called   
a {\em combinatorial and topological data} on $Y[P_1,\ldots,P_m]$. 
From this data we construct a $(2n+1)$-dimensional $T^n$-orbifold with boundary where the boundary is a disjoint union of some locally standard $2n$-dimensional orbifolds.
The construction is quite similar to the basic construction  in Subsection~\ref{condif}.

Let $F$ be a codimension-$k$ face of $Y[P_1, \ldots, P_m]$ for $0<k\le n+1$. If $F$ is a face of one of the marked facets $P_i$ for some $i\in \{1,\ldots, m\}$, then there is a unique collection of remaining facets $F_{i_1},\ldots, F_{i_{k-1}}$ such that $F$ is a component of
the intersection $F_{i_1}\cap\cdots\cap F_{i_{k-1}}\cap P_i$. Otherwise, there is a unique collection of remaining facets $F_{i_1},\ldots, F_{i_{k}}$
such that $F$ is a component of the intersection $F_{i_1}\cap\cdots\cap F_{i_{k}}$.
Let
\begin{equation}
K(F) = 
	\left\{ \begin{array}{ll} \< \eta(F_{i_j}) \mid j = 1, \ldots, \ell-1 \>  & \mbox{if} ~ F \subseteq  F_{i_1} \cap \cdots \cap F_{i_{\ell-1}} 	\cap P_i,\\
	\< \eta(F_{i_j}) \mid j = 1, \ldots, \ell \>  & \mbox{if} ~ F \subseteq F_{i_1} \cap \cdots \cap F_{i_{\ell-1}}\cap F_{i_\ell}
	\end{array} \right.
\end{equation}
where $\<\alpha_i : i=1, \ldots, s\>$ denotes the submodule  of $\ZZ^n$
 generated by the vectors $\alpha_{i} $ for $i = 1, \ldots, s$.
We define an equivalence relation $\sim_b$ on the total space $ E $ of the principal $T^n$-bundle $\mu \colon E \to Y$ as follows: for $x, y\in E$ 
 \begin{equation}\label{equilam}
 x \sim_b y \quad \mbox{if and only if} \quad  \mu(x) = \mu(y) ~~ \mbox{and} ~~ x=uy ~~\mbox{for some}~~ u \in T_F
\end{equation} 
where $ F $ is the unique face of $ Y $ containing $ \mu(x) = \mu(y) $ in its relative interior and, $T_F$ is the subgroup of $T^n$ as defined in
(\ref{def of TF}).  Let 
$$W(Y, \eta, \mu):= E/ \sim_b$$ be the equivalence classes. Then $W(Y, \eta, \mu)$
 has a  $T^n$-action induced from the action of $T^n$ on $E$.
Let $[x]^{\sim_b}$ denote the equivalence class  of $x$ in $W(Y, \eta, \mu)$, and
let $\pi \colon W(Y, \eta, \mu) \to Y$ be the projection map defined by $\pi([x]^{\sim_b})=\mu(x)$.

\begin{theorem}\label{orbbd}
Let $Y[P_1,\ldots,P_m]$ be an $(n+1)$-dimensional face-simple manifolds with
 marked facets, let $\eta$ be an rs-characteristic function on $Y[P_1,\ldots,
 P_m]$, and let $\mu$ be an orientable principal $T^n$-bundle over $Y$.
Then $ W(Y, \eta, \mu)$ is a $(2n+1)$-dimensional orientable effective
$T^n$-orbifold with the  boundary consisting of
$m$ many disjoint orientable locally standard torus orbifolds.
\end{theorem}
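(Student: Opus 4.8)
The plan is to mimic the local analysis in the proof of Lemma~\ref{def_const}, the only genuinely new feature being the appearance of boundary, which I would trace entirely to the marked facets. Fix a point $[x]^{\sim_b}\in W(Y,\eta,\mu)$, let $F$ be the unique face of $Y$ whose relative interior contains $\mu(x)$, and write $k$ for its codimension. I would distinguish three cases according to how $F$ sits in $Y$. When $\mu(x)$ lies in the interior $\overset{\circ}{Y}$, the relation $\sim_b$ is trivial near $x$ and $\pi^{-1}(U_x)\cong T^n\times U_x$ is a smooth $(2n+1)$-dimensional chart with no boundary. When $F$ is a component of an intersection $F_{i_1}\cap\cdots\cap F_{i_k}$ of remaining facets only, then $K(F)$ has rank $k\leq n$ and, exactly as in Lemma~\ref{def_const}, collapsing the rank-$k$ torus $T_F$ over the corner directions produces a chart modeled on an open subset of $\CC^k\times\RR^{n-k}\times\RR^{n+1-k}$ carrying the finite group $G_F=\widetilde{K}(F)/K(F)$; this is again an interior point of $W$.

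The decisive case is when $F\subseteq F_{i_1}\cap\cdots\cap F_{i_{k-1}}\cap P_i$ lies in a marked facet $P_i$. Here $K(F)=\<\eta(F_{i_1}),\ldots,\eta(F_{i_{k-1}})\>$ has rank only $k-1$, because the marked facet $P_i$ carries no rs-characteristic vector. Consequently, among the $k$ corner directions of $U_x\cong\RR^k_{\geq 0}\times\RR^{n+1-k}$, only $k-1$ acquire a collapsing circle; the one coming from $P_i$ survives as a factor $\RR_{\geq 0}$. Collapsing $T_F$ then yields a chart modeled on an open subset of $\CC^{k-1}\times\RR_{\geq 0}\times\RR^{n-k+1}\times\RR^{n+1-k}$ with $G_F$-action, of total real dimension $2(k-1)+1+(n-k+1)+(n+1-k)=2n+1$ and with precisely one boundary direction. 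Thus a point of $W$ is a boundary point if and only if $\mu(x)$ lies over one of the marked facets, and the drop in the rank of $K(F)$ by one is exactly what manufactures the half-space coordinate. I would give these charts the explicit coordinate description of Lemma~\ref{def_const}, then verify local compatibility and orientability (using that $Y$ is oriented and $\mu$ is orientable) verbatim as there and in Remark~\ref{rem_tor_mf}(1), concluding that $W(Y,\eta,\mu)$ is a $(2n+1)$-dimensional orientable effective $T^n$-orbifold with boundary; effectiveness is immediate from the free orbits lying over $\overset{\circ}{Y}$.

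It remains to identify the boundary. Since the marked facets are disjoint, the previous case analysis gives $\partial W=\pi^{-1}\big(\bigcup_{i=1}^m P_i\big)=\bigsqcup_{i=1}^m\pi^{-1}(P_i)$. For each $i$, restricting $\mu$ to $P_i$ gives a principal $T^n$-bundle $\mu_i\colon E|_{P_i}\to P_i$, and the restriction of $\sim_b$ to $E|_{P_i}$ coincides with the relation $\sim$ of Subsection~\ref{condif} for the function $\xi_i$ of (\ref{hr to r}): a face of $P_i$ that is a component of $G_{j_1}\cap\cdots\cap G_{j_{k-1}}$ is a component of $F_{j_1}\cap\cdots\cap F_{j_{k-1}}\cap P_i$, and the submodules generated by $\{\xi_i(G_{j_1}),\ldots,\xi_i(G_{j_{k-1}})\}$ and $\{\eta(F_{j_1}),\ldots,\eta(F_{j_{k-1}})\}$ agree. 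Hence $\pi^{-1}(P_i)=X(P_i,\xi_i,\mu_i)$, which is an orientable locally standard torus orbifold over $P_i$ by Lemma~\ref{lemma:hr to r}, Lemma~\ref{def_const} and Remark~\ref{rem_tor_mf}(1). This produces the asserted $m$ disjoint boundary orbifolds.

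The main obstacle I anticipate is the bookkeeping in the boundary case: one must check that the lone un-collapsed corner coordinate really does assemble into a single half-space factor $\RR_{\geq 0}$ meeting the orbifold-with-boundary chart axioms, and that these boundary charts glue compatibly with the interior charts of the first two cases. The dimension count and the correct identification of the rank of $T_F$ (namely $k-1$, not $k$) are the crux; everything else is a transcription of the manifold-with-corners collapse already carried out in Lemma~\ref{def_const}.
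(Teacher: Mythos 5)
Your proposal is correct and follows the paper's proof in outline: the same three-case analysis of the position of $\mu(x)$, the same reduction of the interior and unmarked-face cases to Lemma \ref{def_const}, and the same identification of the boundary pieces with the orbifolds $X(P_i,\xi_i,\mu_i)$ via Lemma \ref{lemma:hr to r} and Remark \ref{rem_tor_mf}(1). The only substantive divergence is the treatment of the marked-facet case. You argue pointwise: since $P_i$ carries no rs-characteristic vector, $K(F)$ has rank $k-1$ rather than $k$, so exactly one corner coordinate escapes the collapse and survives as $\RR_{\geq 0}$; you must then separately check that these local half-space coordinates glue into an honest boundary (the obstacle you flag yourself), and that $\sim_b$ restricted to $E|_{P_i}$ equals $\sim$, giving $\pi^{-1}(P_i)=X(P_i,\xi_i,\mu_i)$. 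The paper short-circuits both steps with one global device: a collar neighborhood $U_x\cong P_j\times[0,1)$ of the entire facet $P_j$, lifted to a $T^n$-equivariant homeomorphism $\mu^{-1}(U_x)\cong \mu^{-1}(P_j)\times[0,1)$. Because $\sim_b$ leaves the $[0,1)$ factor untouched and restricts to $\sim$ on $\mu^{-1}(P_j)$, this yields $\mu^{-1}(U_x)/\sim_b\,\cong\,X(P_j,\xi_j,\mu_j)\times[0,1)$ at once, so the boundary charts are simply charts of $X(P_j,\xi_j,\mu_j)$ crossed with $[0,1)$, and the patching question never arises. Your rank-count makes the source of the boundary more transparent (the rank drop is literally where the half-space comes from), but a complete write-up along your lines still needs the collar, or an equivalent product-structure argument, to verify the orbifold-with-boundary axioms cleanly --- so you may as well invoke it at the outset, as the paper does.
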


\begin{proof}
For any $[x]^{\sim_b} \in W(Y, \eta, \mu)$ we show that there exists a neighborhood of $[x]^{\sim_b}$ which is $T^n$-equivariantly homeomorphic to $V/G$ where $V$ is a $T^n$-invariant open subset of $\CC^{n} \times \RR_{\geq 0}$ on which 
$T^n$ action on $\CC^n$ is standard and  on  $\RR_{\ge 0}$ is trivial,
and $G$ is a finite subgroup of $\Diff(V)$.
There are three cases:
\begin{enumerate}
\item when $\mu(x)\in \overset{\circ}{Y}$,
\item when $\mu(x)\in \overset{\circ}{F}$  where $F$ is a face of $Y$ which is not contained  in any of the marked facets $P_i$, and
\item when $\mu(x)\in P_j$ for some $j \in \{1, \ldots, m\}$.
\end{enumerate}

The proofs of the first two cases  are almost identical to  that of Lemma~\ref{def_const}. 
The only difference is that in Lemma~\ref{def_const} we found a $T^n$-invariant open subset $V$  in $\CC^n$ instead of
$\CC^{n} \times \RR_{\geq 0}$, but this difference does not cause any difficulty here.

So we prove the third case.
Now assume  $\mu(x)\in P_j$ for some $j=1, \ldots, m$.
Since $Y$ is a compact manifold with corners, there exists a collar neighborhood $U_{x}$ of $P_j$ in $Y$. 
So there is a diffeomorphism $g_x \colon U_{x} \to P_j \times [0,1)$ as manifold with corners and 
a $T^n$-equivariant homeomorphism $ \widetilde{g}_x \colon \mu^{-1}(U_x) \to \mu^{-1}(P_j) \times [0, 1)$ 
such that the following diagram commutes. 
\begin{equation}
\begin{CD}
\mu^{-1}(U_x) @>{\widetilde{g}_x}>> \mu^{-1}(P_j) \times [0, 1)\\
@VVV  @VVV\\
 U_x @>{g_x}>> P_j \times [0, 1).
\end{CD}
\end{equation} 
Observe that the equivalence relation $\sim_b$ in \eqref{equilam} does not affect the second component of $\mu^{-1}(P_j) \times [0, 1)$ and $\sim_b$ is same as the relation $\sim$ in \eqref{equ001} on $\mu^{-1}(P_j)$. Let $\{G_{j_1}, \ldots, G_{j_{h_j}}\}$ be the facets of $P_j$. 
Then $G_{j_i} = P_j \cap F_{j_i}$ for a unique remaining  facet $F_{j_i}$ of $Y[P_1, \ldots, P_m]$. 
Define $$\xi_j(G_{j_i}) = \eta(F_{j_i}) \quad \mbox \quad i=1, \ldots, h_j.$$ 
Then $\xi_i$ is an r-characteristic function on $P_j$ by Lemma~\ref{lemma:hr to r}. 
Let $E_j = \mu^{-1}(P_j)$ and $~\mu_j \colon E_j \to P_j$ be the restriction of $\mu$. 
Then $\mu_j \colon E_j \to P_j$ is a principal $T^n$ bundle. 
So we have the following $T^n$-equivariant homeomorphisms.
\begin{equation}\label{eq002}
\mu^{-1}(U_x)/ \sim_b \cong (\mu^{-1}(P_j)/ \sim_b) \times [0, 1) \cong (\mu^{-1}(P_j)/ \sim) \times [0, 1)
\cong X(P_j, \xi_j, \mu_j) \times [0, 1)
\end{equation}
 where $X(P_j, \xi_j, \mu_j)$ is the orientable locally standard torus orbifold associated
to the characteristic and topological data $\{(P_j, \xi_j), (E_j, P_j, \mu_j)\}$. 
Let $y^{\prime} \in  X(P_j, \xi_j, \mu_j) \times [0, 1)$ be the image of $[x]^{\sim_b}$ under the homeomorphism in \eqref{eq002}. 
Since $\mu(x) \in P_j$, we have  $y^{\prime} = (y, 0)$ for some $y \in X(P_j, \xi_j, \mu_j)$. So there is an orbifold chart $(V, G, \psi)$ on a neighborhood of $y$ in $X(P_j, \xi_j, \mu_j)$. Thus $(V \times [0, 1), G, \psi \times id)$ is an orbifold chart  on a neighborhood of $[x]^{\sim_b}$ in $W(Y, \eta, \mu)$. One can check the compatibility of these charts
using the similar arguments as in the proof of Lemma \ref{def_const}. 

Hence the space $W(Y, \eta, \mu)$ is an orientable compact effective orbifold whose  boundary is the disjoint union of 
orientable locally standard torus orbifolds $\{X(P_j, \xi_j, \mu_j)$ for $ j = 1, \ldots, m\}$. 
Recall that by our assumption $Y$ is orientable. So choosing an orientation of $Y$ and $T^n$ we get an
orientations on $W(Y, \eta, \mu)$ and the boundary orbifolds.
\end{proof}

When the principal $T^n$-bundles $\mu$ is trivial, we write $W(Y, \eta)$ instead of $W(Y, \eta, \mu)$. 

\begin{example}\label{egchar2}
Two rs-characteristic functions $\eta^i$ of the face-simple manifolds 
 with marked facets  $Y_i$  for $i=1,2$ are given in Figure \ref{egc2}. 
(This figure is same as \cite[Figure 4]{Sar2}, but the functions on them are different.) 
Here principal $T^2$-bundles $\mu^i$ on $Y_i$ are trivial, since $Y_i$ is contractible for $i=1, 2$. 
In $(a)$ all marked facets $P_1, P_2, P_3, P_4$ are triangles, and hence $W(Y_1, \eta^1)$ is an orientable orbifold whose boundary consists of  locally standard orbifolds $X(P_j, \xi_j)$ which are orbifold complex 
projective spaces defined in Subsection~\ref{gwps} for $j\in \{1,2,3,4\}$. 

In $(b)$, the marked facets $P_1,P_{2}, P_{3}, P_{4}$ are triangles and $P_{5}$ is a rectangle. Thus $W(Y_2, \eta^2)$ has the boundary 
consisting of orbifold complex projective spaces  for $j\in \{1,2,3,4\}$ and an orbifold Hirzebruch surface   $X(P_5, \xi^2_5)$ defined in Subsection~\ref{orb_hir_sur}.
\begin{figure}[ht]
        \centerline{
           \scalebox{0.70}{
            \input{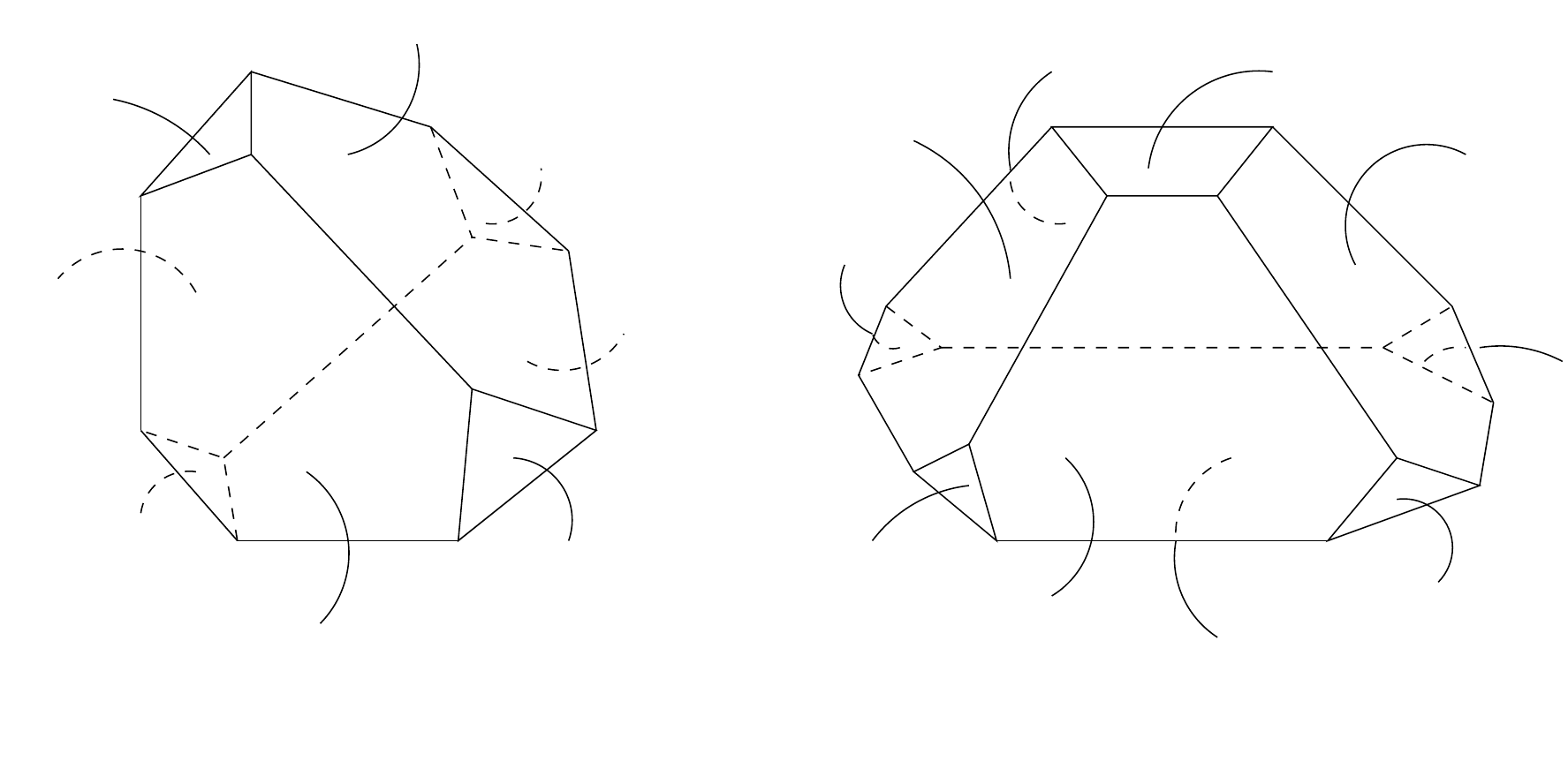_t}
            }
          }
       \caption {Some rs-characteristic functions of face-simple manifolds $Y_1$ and $Y_2$ with marked facets.}
        \label{egc2}
      \end{figure}
      \qed
\end{example}

If $\eta$ satisfies that the set of vectors $ \{\eta(F_{i_1}), \ldots,$ $ \eta(F_{i_{k}})\}$
is a part of a basis of $\ZZ^{n}$ whenever the intersection of the facets $ \{F_{i_1}, \ldots,$ $ F_{i_k}\}$
is nonempty, then all the local groups in the proof of Lemma \ref{orbbd} are trivial. So, in this case, we have the following corollary.
\begin{corollary}\label{manbdd}
Under the same assumption as in the last paragraph, the space $ W(Y, \eta, \mu) $ is a $(2n+1)$-dimensional smooth orientable bounded $T^n$-manifold whose boundary is 
a disjoint union of orientable locally standard torus manifolds.
\end{corollary}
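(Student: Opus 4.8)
The plan is to derive this directly from Theorem~\ref{orbbd} by showing that the s-characteristic hypothesis forces every local group occurring in the orbifold charts of $W(Y,\eta,\mu)$ to be trivial, exactly in the spirit of Remark~\ref{rem_tor_mf}(2). The one number-theoretic fact I would isolate first is the following: if a set of vectors $\{\eta(F_{i_1}),\ldots,\eta(F_{i_k})\}$ is part of a basis of $\ZZ^n$, then the submodule $K(F)=\<\eta(F_{i_1}),\ldots,\eta(F_{i_k})\>$ is a direct summand of $\ZZ^n$, hence \emph{saturated}, so that $\widetilde{K}(F)=K(F)_{\RR}\cap\ZZ^n=K(F)$ and therefore the finite abelian group $G_F=\widetilde{K}(F)/K(F)$ is trivial.

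First I would revisit the three cases in the proof of Theorem~\ref{orbbd}. In Case~(1) the point lies over $\overset{\circ}{Y}$ and the chart already has trivial group. In Case~(2), where $\mu(x)\in\overset{\circ}{F}$ for a face $F$ not contained in any marked facet, the chart group is $G_F=\widetilde{K}(F)/K(F)$ with $K(F)$ generated by the rs-characteristic vectors of the remaining facets meeting along $F$; since $F_{i_1}\cap\cdots\cap F_{i_k}\neq\emptyset$, the s-characteristic assumption together with the observation above gives $G_F=1$. Hence both the interior-type chart $(V,G,\psi)$ and the boundary-collar chart $(V\times[0,1),G,\psi\times\mathrm{id})$ have trivial group, so the quotients are genuinely $V$ and $V\times[0,1)$, open subsets of $\CC^n$ and $\CC^n\times\RR_{\ge 0}$ respectively. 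The transition functions then become ordinary diffeomorphisms, and $W(Y,\eta,\mu)$ acquires a smooth manifold-with-corners structure rather than merely an orbifold structure.

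Next I would treat the boundary. Each boundary piece is $X(P_j,\xi_j,\mu_j)$, where by Lemma~\ref{lemma:hr to r} the restricted function $\xi_j$ is an r-characteristic function on $P_j$. Under the s-characteristic hypothesis the vectors $\xi_j(G_{j_i})=\eta(F_{j_i})$ assigned to the facets of $P_j$ are again part of a basis of $\ZZ^n$ whenever the corresponding facets of $P_j$ have nonempty intersection, so $\xi_j$ satisfies Definition~\ref{def_char_fun} and is in fact a characteristic function. By Remark~\ref{rem_tor_mf}(2) each $X(P_j,\xi_j,\mu_j)$ is therefore a locally standard torus manifold, and the orientations of $W(Y,\eta,\mu)$ and of the boundary pieces are inherited from the orientations of $Y$ and $T^n$ fixed in Theorem~\ref{orbbd}.

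I do not expect a genuine obstacle here, since all the analytic content---the chart construction, the compatibility of charts, and the choice of orientation---has already been carried out in Theorem~\ref{orbbd}; the corollary only upgrades ``orbifold'' to ``manifold''. The single point demanding care is verifying that the ``part of a basis'' condition is inherited by every face of $Y$ and by each restricted function $\xi_j$ on the marked facets, so that no quotient singularity survives anywhere; this is immediate from the saturation argument above, but I would state it explicitly for each of the three cases to make the triviality of all local groups manifest.
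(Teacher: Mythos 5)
Your proposal is correct and follows essentially the same route as the paper, which proves the corollary simply by observing (in the paragraph preceding it) that under the s-characteristic hypothesis all local groups arising in the proof of Theorem~\ref{orbbd} are trivial, so the orbifold charts become honest manifold charts and the boundary pieces become locally standard torus manifolds via Remark~\ref{rem_tor_mf}(2). Your saturation argument ($K(F)$ a direct summand implies $\widetilde{K}(F)=K(F)$, hence $G_F=1$) is exactly the detail the paper leaves implicit.
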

We remark that the above corollary is a generalization of \cite[Lemma 4.4]{Sar3}.


\section{Equivariant cobordism of torus orbifolds}\label{cobor}
In this section we exhibit several explicit  cobordisms among orientable locally standard  torus orbifolds.
First, we recall the definition of equivariant  cobordism of  orientable locally standard torus orbifolds.

\begin{defn}\label{def_equi_cob}
Two $2n$-dimensional orientable locally standard torus orbifolds $X_1$ and $X_2$ are said to be {\em equivariantly cobordant}  (or {\em torus cobordant}) if there exists a $(2n+1)$-dimensional orientable effective $T^n$-orbifold $W$ with boundary $\partial W $ such that $\partial W$ is $T^n$-equivariantly diffeomorphic to $ X_1 \sqcup (-X_2)$ under an
orientation preserving diffeomorphism. Here $-X_2$ denotes $X_2$ with the opposite orientation.
\end{defn}

The above cobordism relation has transitive property. It follows from the fact that
torus orbifolds considered here are compact, so one can construct a tubular neighborhood
of a boundary component. Let $\mathcal{OC}_{n}$ be the group of equivariant cobordism
classes of $2n$-dimensional orientable locally standard torus orbifolds, 
where the group structure is given by the disjoint union.

\begin{example}\label{bdd_lens}
Any torus orbifold over an eye-shape in Example~\ref{eye-shape1} is $T^2$-equivariantly cobordant to zero, i.e.,  $T^2$-equivariantly  a boundary. 
Indeed, let $\rho \colon \ker{\phi} \times S^4 \to S^4$ be the action discussed in Example \ref{eye-shape2}. 
Then we have the following commutative diagram.
\begin{equation}
\begin{CD}
\ker{\phi} \times S^{4} \times I @>{\rho \times \mbox{id}}>> S^{4} \times I\\
@VVV  @VVV\\
 \ker{\phi} \times (S^4 \times I)/ (S^4 \times \{0\}) @>>> (S^4 \times I)/ (S^4 \times \{0\}) .
\end{CD}
\end{equation}
Since $(S^4 \times I)/ (S^4 \times \{0\}) \cong D^5$, the space $D^5/\ker{\phi}$ is an orientable orbifold with boundary $S^4/\ker{\phi}$. 
That is, the torus orbifold $X(P^2, \xi)= S^4/\ker{\phi}$ over an eye-shape $P^2$ is $T^2$-equivariantly a boundary. 
So the cobordism class $[X(P^2, \xi)] = 0$ in the group $\mathcal{OC}_{4}$. \qed
\end{example}

Before we give equivariant cobordism results on orientable locally standard torus orbifolds, 
let us give a simple result on  locally standard torus manifolds over polytopes with simple holes. 
Polytopes with simple holes are defined in  \cite{PS2}  as follows. 
Let $Q_0$ be an $n$-dimensional simple  polytope in $\RR^n$. Let $ Q_1,
Q_2, \ldots, Q_\ell $ be a collection of disjoint simple polytopes in the interior of $Q_0$. 
Let $Q = Q_0 - \bigcup_{k=1}^{\ell} \overset{\circ}{Q}_k$.
This $Q$ is called an $n$-dimensional {\em polytope with simple holes} $Q_1, \ldots, Q_\ell$. Note that $Q$ is a nice manifold with corners. 

Let $M$ be an orientable locally standard torus manifold over the polytope $Q$ with $\ell$ simple holes $Q_1, \ldots, Q_\ell$. 
Let $\lambda$ be the characteristic function associated to $M$, and let $\lambda^i$ be the restriction of $\lambda$ to $Q_i$ for $i=0, 1, \ldots, \ell$. 
\begin{lemma}\label{equi}
Let $M$ be as above.  If $H^2(Q)=0$, then $M$ is equivariantly cobordant to the disjoint union $M(Q_0, \lambda^0) \sqcup
\cdots \sqcup M(Q_\ell, \lambda^\ell)$ by a $(2n+1)$-dimensional $T^n$-manifold. 
\end{lemma}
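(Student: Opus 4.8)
The plan is to exhibit the required cobordism as a single manifold produced by the construction of Section~\ref{def}, applied to an $(n{+}1)$-dimensional face-simple manifold $Y$ obtained by drilling the holes $Q_k$ out of the top of the slab $Q_0\times[0,1]$. First I would dispose of the bundle data using the hypothesis $H^2(Q)=0$: principal $T^n$-bundles over $Q$ are classified by $H^2(Q;\ZZ)^n$, so every such bundle is trivial, and by Theorem~\ref{clasi} the manifold $M$ is $T^n$-equivariantly diffeomorphic to $X(Q,\lambda)$ with the trivial bundle. Since each $Q_i$ is a simple polytope, hence contractible, the same reasoning gives $M(Q_i,\lambda^i)\cong X(Q_i,\lambda^i)$ with trivial bundles. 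Thus it suffices to bound $M\sqcup\big({-}\bigsqcup_{i=0}^{\ell}M(Q_i,\lambda^i)\big)$ by an oriented $(2n{+}1)$-dimensional $T^n$-manifold, and throughout I may work with trivial bundles only.

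Next I would build $Y$ explicitly. Set
\[
Y=(Q_0\times[0,1])\setminus\bigsqcup_{k=1}^{\ell}\big(\overset{\circ}{Q}_k\times(\tfrac{1}{2},1]\big).
\]
Its facets fall into two horizontal families and two vertical families: the bottom $Q_0\times\{0\}$, the top $Q\times\{1\}$ (a copy of the holey $Q$, the holes having been removed only for $t>\tfrac12$), the shelves $Q_k\times\{\tfrac12\}$ exposed beneath each drilled tube, the outer walls $F\times[0,1]$ for $F\in\mathcal F(Q_0)$, and the inner walls $G\times(\tfrac12,1]$ for $G\in\mathcal F(Q_k)$. I would mark the facets $Q_0\times\{0\}$, $Q\times\{1\}$ and $Q_1\times\{\tfrac12\},\dots,Q_\ell\times\{\tfrac12\}$, leaving the walls as the remaining facets. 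The verification that $Y[Q_0\times\{0\},Q\times\{1\},Q_1\times\{\tfrac12\},\dots,Q_\ell\times\{\tfrac12\}]$ is a nice face-simple manifold with marked facets is routine away from the drilling (there $Y$ is just $Q_0\times[0,1]$, nice since $Q_0$ is simple); near each newly created shelf corner $(w,\tfrac12)$ with $w\in\mathcal V(Q_k)$ one checks that exactly the shelf together with the $n$ inner walls meet, so $(w,\tfrac12)$ is a vertex lying on the single marked facet $Q_k\times\{\tfrac12\}$. The marked facets are pairwise disjoint (they sit at $t=0,\tfrac12,1$) and every vertex of $Y$ lies on exactly one of them, giving $\mathcal V(Y)=\bigsqcup\mathcal V(P_i)$.

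I would then define $\eta$ on the remaining facets by $\eta(F\times[0,1])=\lambda(F)$ for $F\in\mathcal F(Q_0)$ and $\eta(G\times(\tfrac12,1])=\lambda(G)$ for $G\in\mathcal F(Q_k)$. Because $M$ is a manifold, $\lambda$ is a characteristic function (the remark following Definition~\ref{def_char_fun}), and intersecting walls of $Y$ correspond exactly to intersecting facets of $Q$; hence the $\eta$-values of any collection of mutually intersecting remaining facets form part of a basis, so $\eta$ is an s-characteristic function. Tracing the rule \eqref{hr to r}, the r-characteristic function induced on each marked facet is the expected restriction of $\lambda$: it is $\lambda$ on $Q\times\{1\}$, $\lambda^0$ on $Q_0\times\{0\}$, and $\lambda^k$ on $Q_k\times\{\tfrac12\}$. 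Corollary~\ref{manbdd} then produces an oriented $(2n{+}1)$-dimensional smooth $T^n$-manifold $W:=W(Y,\eta)$ with trivial bundle, whose boundary is the disjoint union of the locally standard torus manifolds over the marked facets, namely $M\sqcup M(Q_0,\lambda^0)\sqcup\cdots\sqcup M(Q_\ell,\lambda^\ell)$.

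The step I expect to be the main obstacle is the orientation bookkeeping needed to match Definition~\ref{def_equi_cob}, i.e. to see $\partial W\cong M\sqcup\big({-}\bigsqcup_i M(Q_i,\lambda^i)\big)$ rather than with the wrong signs. Orienting $Y$ by $\omega_{Q_0}\wedge dt$, the induced boundary orientation assigns to $M$ (top, outward normal $+\partial_t$) and to each shelf $M(Q_k)$ (outward normal $+\partial_t$) the sign of $\omega$, while $M(Q_0)$ (bottom, outward normal $-\partial_t$) receives the opposite sign. The resolution is that each hole $Q_k$ inherits from $Q$ the \emph{inner}-boundary orientation on $\partial Q_k$, which is opposite to its orientation as a solid polytope inside $Q_0$; fixing the orientation convention on the fillers $M(Q_k,\lambda^k)$ accordingly (together with a choice of orientation on $T^n$) converts the computed signs into exactly $M\sqcup({-}\bigsqcup_i M(Q_i,\lambda^i))$, proving that $M$ is equivariantly cobordant to $\bigsqcup_{i=0}^{\ell}M(Q_i,\lambda^i)$.
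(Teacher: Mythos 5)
Your first step (using $H^2(Q)=0$ to trivialize the bundle and invoke Theorem~\ref{clasi}) matches the paper, but after that your route has a genuine gap: the set $Y=(Q_0\times[0,1])\setminus\bigsqcup_{k}\bigl(\overset{\circ}{Q}_k\times(\tfrac12,1]\bigr)$ is \emph{not} a manifold with corners, so it cannot serve as a face-simple manifold with marked facets and Theorem~\ref{orbbd}/Corollary~\ref{manbdd} cannot be applied to it. The failure is along the shelf rims: at a point $(q,\tfrac12)$ with $q$ in the relative interior of a facet $G$ of $Q_k$, a local model for $Y$ is $\{x_n\ge 0\}\cup\{t\le \tfrac12\}\subset\RR^{n-1}\times\RR\times\RR$, the complement of an open quadrant; this is a reentrant (reflex) corner. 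Its tangent cone is non-convex and contains a line, whereas every local model $\RR^{\,n+1-k}\times\RR^k_{\geq 0}$ of a manifold with corners is convex; since a diffeomorphism of such subsets induces a linear isomorphism of tangent cones and linear isomorphisms preserve convexity, no neighborhood of $(q,\tfrac12)$ is diffeomorphic to an open subset of $\RR^{n+1}_{\geq 0}$. The same happens, worse, at the shelf vertices $(w,\tfrac12)$, where the local model is the complement of an open orthant. Your verification at these points counts the right number of incident facets ($n$ inner walls plus the shelf) but ignores the local geometry, and the defect is intrinsic to any ``blind hole'' drilled into the slab, however the walls are sloped: the material always occupies more than a half-space around the rim. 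To salvage the idea you would need to construct an abstract smooth nice manifold with corners realizing this face poset with genuine (convex) corners, which is a real argument, not a routine check; the rest of your proposal (the vertical extension $\eta$ of $\lambda$, the induced r-characteristic functions on the marked facets, and the application of Corollary~\ref{manbdd}) would indeed go through if such a $Y$ were supplied, so the gap is localized but essential. Note also that you identified the orientation bookkeeping as the main obstacle, when in fact it is this corner issue.

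For comparison, the paper sidesteps the construction of any bounding manifold ``by hand'': after reducing to $M\cong M(Q,\lambda)$ exactly as you do, it invokes the decomposition of $M(Q,\lambda)$ as an equivariant connected sum $M(Q_0,\lambda^0)\,\sharp\,M(Q_1,\lambda^1)\,\sharp\cdots\sharp\,M(Q_\ell,\lambda^\ell)$ along principal (free) orbits, following \cite[Lemma 2.1]{PS2}, and then uses the standard fact that an equivariant connected sum along a free orbit is equivariantly cobordant to the corresponding disjoint union. That argument never needs a face-simple manifold with marked facets over the holey polytope, which is precisely the object your construction fails to produce.
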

\begin{proof}
Since $H^2(Q)=0$, any principal $T^n$-bundle over $Q$ is trivial. 
Hence by Theorem~\ref{clasi}, $M$ is equivariantly diffeomorphic to $M(Q, \lambda)$.
Following the proof of \cite[Lemma 2.1]{PS2}, one can show that  $M(Q, \xi)$ is $T^n$-equivariantly diffeomorphic to the connected sum
$M(Q_0, \lambda^0)\sharp M(Q_1, \lambda^1) \sharp\cdots\sharp M(Q_\ell, \lambda^\ell)$ where the connected sum occurred along a principal orbit of quasitoric manifolds $M(Q_i, \lambda^i)$ for $i=0, \ldots, \ell$.


On the other hand, it is well-known that equivariant connected sum of two manifolds are equivariantly cobordant to the disjoint union of them. Therefore we have the lemma.
\end{proof}

For a positive integer $k$, let $[k]$ denote the set $\{1, \ldots, k\}$. 
For an integer $1 \le j \leq k$, let $\mathcal P_j[k]$ denote the collection of subsets of $[k]$ with $j$ elements.

\begin{lemma}\label{indl}
Let $j$ and $k$ be positive integers such that $j \leq k$. 
Let  a subset $\mathcal A \subset \mathcal P_j[k]$ and a set $\{\xi_1, \ldots, \xi_k\}$ of vectors in $\ZZ^j$ be given such that 
for any $\{i_1, \ldots, i_j\} \in \mathcal A$ the vectors $\xi_{i_1}, \ldots, \xi_{i_j}$ are linearly independent. 
Then there exists a primitive vector $\xi_0 \in \ZZ^j$ such that the set $\{ \xi_0, \xi_{i_1}, \ldots, \xi_{i_{\ell-1}}, \widehat{\xi_{i_\ell}}, \xi_{i_{\ell+1}}, \ldots, \xi_{i_{j}}\}$ is a linearly independent
set of vectors in $\ZZ^j$ for any $\{i_1, \ldots, i_j\} \in \mathcal A$ and $\ell= 1, \ldots, j$. Here  $\ \widehat{}\ $
represents the omission of the corresponding entry.
\end{lemma}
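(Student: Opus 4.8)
The plan is to reduce the statement to the elementary fact that a finite-dimensional vector space over the infinite field $\QQ$ cannot be written as a finite union of proper linear subspaces. First I would reinterpret the conclusion geometrically. For a fixed $A=\{i_1,\ldots,i_j\}\in\mathcal A$ and a fixed index $\ell\in\{1,\ldots,j\}$, set
$$
H_{A,\ell} := \Span_{\QQ}\{\xi_{i_r}\mid r\in\{1,\ldots,j\},\ r\neq \ell\}.
$$
Since $\{\xi_{i_1},\ldots,\xi_{i_j}\}$ is linearly independent by the hypothesis defining $\mathcal A$, deleting $\xi_{i_\ell}$ leaves $j-1$ independent vectors, so $H_{A,\ell}$ is a $(j-1)$-dimensional, hence proper, subspace of $\QQ^{\,j}$. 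The set $\{\xi_0,\xi_{i_1},\ldots,\widehat{\xi_{i_\ell}},\ldots,\xi_{i_j}\}$ is linearly independent over $\QQ$ (equivalently over $\ZZ$) precisely when $\xi_0\notin H_{A,\ell}$. Thus a vector $\xi_0$ with the required property is exactly one avoiding the finite union $\bigcup_{A\in\mathcal A}\bigcup_{\ell=1}^{j}H_{A,\ell}$, which is a union of at most $|\mathcal A|\cdot j$ proper subspaces.

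Next I would produce such a vector. By the standard lemma that $\QQ^{\,j}$ is not a finite union of proper subspaces, there is a rational vector $v\in\QQ^{\,j}$ lying in none of the $H_{A,\ell}$. Clearing denominators gives an integer vector $w=cv$ for some positive integer $c$; because each $H_{A,\ell}$ is closed under scalar multiplication, $w$ still avoids every $H_{A,\ell}$ (if $w\in H_{A,\ell}$ then $v=w/c\in H_{A,\ell}$, a contradiction). Finally, dividing $w$ by the greatest common divisor of its coordinates yields a primitive vector $\xi_0\in\ZZ^{\,j}$ that is a nonzero rational multiple of $w$, hence also lies in no $H_{A,\ell}$. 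This $\xi_0$ satisfies all the required independence conditions.

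The argument is essentially routine once the reformulation is in place; the only point that deserves care is the covering lemma, which I would either cite or prove in one line: over the infinite field $\QQ$ an inductive argument (or a counting estimate on a large lattice box $[-N,N]^j$, where each hyperplane meets $O(N^{j-1})$ lattice points while the box contains $\sim N^j$ of them) shows that finitely many proper subspaces cannot exhaust the space. I do not anticipate a genuine obstacle. The two things to verify are that each $H_{A,\ell}$ is genuinely proper, which is immediate from the linear independence built into the definition of $\mathcal A$, and the degenerate case $j=1$, where every $H_{A,\ell}=\{0\}$ and any primitive vector (namely $\pm 1$) works.
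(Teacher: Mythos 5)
Your proposal is correct and follows essentially the same strategy as the paper's proof: both reduce the lemma to the fact that $\ZZ^j$ (equivalently $\QQ^j$) cannot be covered by a finite union of proper subspaces of rank (dimension) $j-1$, and then rescale the avoiding vector to make it primitive. One difference is worth flagging: the paper avoids the $\ZZ$-submodules $K_{I,i_\ell}\subset\ZZ^j$ \emph{generated} by the $j-1$ remaining vectors, whereas you avoid their $\QQ$-spans $H_{A,\ell}$, and your formulation is in fact the more careful one. Avoiding a $\ZZ$-span is not sufficient for linear independence --- for instance $(1,0)$ lies outside the submodule $\ZZ(2,0)$ yet is linearly dependent with $(2,0)$ --- so the paper's final step (``choose $\xi_0$ primitive'') implicitly requires replacing each $K_{I,i_\ell}$ by its saturation $\widetilde{K}_{I,i_\ell}=\(K_{I,i_\ell}\)_{\mathbb R}\cap\ZZ^j$, which is exactly what your $\QQ$-spans accomplish. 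Your primitivity step is also handled correctly: dividing by the gcd keeps $\xi_0$ on the same $\QQ$-line, hence still outside every $H_{A,\ell}$, a point the paper glosses over.
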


\begin{proof}
The $j=1$ case follows because $|\ZZ|=\infty$. 
Let $j > 1$. For each $I=\{i_1,\ldots, i_j\} \in \mathcal A$ and for each $i_\ell\in I$, let $K_{I, {i_\ell}}$ be the rank $(j-1)$ submodule of $\ZZ^{j}$
generated by $\{  \xi_{i_1}, \ldots, \xi_{i_{\ell-1}}, \widehat{\xi_{i_\ell}}, \xi_{i_{\ell+1}}, \ldots, \xi_{i_{j}}\}$. 
Then $\bigcup_{\ell \in I,\, I\in \mathcal A} K_{I, {i_\ell}}$ is a finite union of rank $(j-1)$ submodules of $\ZZ^j$. 
Therefore there exists a nonzero vector 
$$\xi_0\in \ZZ^j - \bigcup_{\ell \in I, \, I\in \mathcal A} K_{I, {i_\ell}}.$$
If we choose $\xi_{0}$ to be a primitive vector, then it clearly satisfies the conditions in the lemma.
\end{proof}

Now we prove one of the main theorems of this article.
\begin{theorem}\label{coborthm}
Let $X$ be an orientable $2n$-dimensional locally standard torus orbifold with $k$ fixed points. Then $X$ is
equivariantly cobordant to a disjoint union of $k$ many  orbifold complex projective spaces.  
\end{theorem}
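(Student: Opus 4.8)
The plan is to exhibit an explicit cobordism using the orbifold-with-boundary machinery of Theorem~\ref{orbbd}. By Theorem~\ref{clasi} I may assume $X=X(Q,\lambda,\tau)$, where $Q=X/T^n$ is an $n$-dimensional nice orientable manifold with corners, $\lambda\colon\mathcal F(Q)\to\ZZ^n$ is its r-characteristic function, and $\tau\colon E_X\to Q$ is the associated principal $T^n$-bundle. Since the isotropy over a vertex of $Q$ is the full rank-$n$ torus $T_F$, the $k$ fixed points of $X$ correspond bijectively to the $k$ vertices $v_1,\dots,v_k$ of $Q$. I will construct an $(n+1)$-dimensional face-simple manifold with marked facets $Y[Q,\Delta_1,\dots,\Delta_k]$ together with an rs-characteristic function $\eta$ and a principal $T^n$-bundle $\mu$ so that the boundary of $W(Y,\eta,\mu)$ consists of $X$ together with $k$ orbifold complex projective spaces, one for each $\Delta_i$.

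First I would construct $Y$ as the vertex cut of the product $Q\times[0,1]$ performed only at the top vertices. The space $Q\times[0,1]$ is an $(n+1)$-dimensional nice manifold with corners whose vertices are exactly $v_i\times\{0\}$ and $v_i\times\{1\}$; since each $v_i$ is the intersection of $n$ facets of $Q$, each top vertex $v_i\times\{1\}$ lies in exactly $n+1$ facets of $Q\times[0,1]$ (the $n$ walls $F\times[0,1]$ through $v_i$, together with the top $Q\times\{1\}$), so cutting it off replaces it by an $n$-simplex facet $\Delta_i$. Let $Y$ denote the resulting manifold with corners, and mark the facets $Q\times\{0\}$ and $\Delta_1,\dots,\Delta_k$. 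For sufficiently small cuts these are pairwise disjoint, and the vertex set of $Y$ is precisely $\mathcal V(Q\times\{0\})\sqcup\bigsqcup_i\mathcal V(\Delta_i)$, so $Y[Q,\Delta_1,\dots,\Delta_k]$ is a face-simple manifold with marked facets. The remaining facets are the truncated walls $\widetilde F$ (one for each $F\in\mathcal F(Q)$) and the truncated top $\widetilde T$.

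Next I would define the data. Set $\eta(\widetilde F)=\lambda(F)$ on each wall, and apply Lemma~\ref{indl} with $j=n$, the vectors $\{\lambda(F)\}_{F\in\mathcal F(Q)}$, and $\mathcal A$ the family of vertex index sets of $Q$, to produce a primitive vector $\xi_0\in\ZZ^n$ that is linearly independent from every $(n-1)$-element subset of the characteristic vectors meeting at a common vertex; then put $\eta(\widetilde T)=\xi_0$. Every nonempty intersection of remaining facets corresponds either to a face of $Q$, giving the vectors $\lambda(F_{i_1}),\dots,\lambda(F_{i_r})$ which are independent because $\lambda$ is r-characteristic, or to such a face together with $\widetilde T$, which adjoins $\xi_0$ to an $(r\le n-1)$-subset of a vertex's vectors and is therefore independent by the choice of $\xi_0$; hence $\eta$ is an rs-characteristic function on $Y$. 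For the bundle I take $\mu=p^{\ast}\tau$, where $p\colon Y\to Q$ is the natural projection; this is an orientable principal $T^n$-bundle restricting to $\tau$ over $Q\times\{0\}$ and to a (necessarily trivial, by contractibility) bundle over each $\Delta_i$.

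Finally I would read off the boundary via Theorem~\ref{orbbd}. The induced r-characteristic function on the marked facet $Q\times\{0\}$ is $\lambda$ and the restricted bundle is $\tau$, so this boundary piece is $X(Q,\lambda,\tau)=X$. On each simplex $\Delta_i$ the induced r-characteristic function sends the $n$ wall-facets to $\lambda(F_{j_1}),\dots,\lambda(F_{j_n})$ (the facets of $Q$ through $v_i$) and the top-facet to $\xi_0$; Lemma~\ref{indl} guarantees that any $n$ of these $n+1$ vectors are independent, so this is a genuine r-characteristic function on a simplex and $X(\Delta_i,\xi_{\Delta_i})$ is a quasitoric orbifold over a simplex, hence an orbifold complex projective space by Lemma~\ref{cops}. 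Thus $\partial W(Y,\eta,\mu)=X\sqcup\bigsqcup_{i=1}^k X(\Delta_i,\xi_{\Delta_i})$, and orienting $Y$ and $T^n$ as in the proof of Theorem~\ref{orbbd} produces the orientations required by Definition~\ref{def_equi_cob}. I expect the main obstacle to be purely combinatorial: checking that the top-vertex cut of $Q\times[0,1]$ is genuinely nice and face-simple with the asserted marked facets, and that the single vector $\xi_0$ from Lemma~\ref{indl} simultaneously makes $\eta$ an rs-characteristic function and each $\xi_{\Delta_i}$ an r-characteristic function; once this bookkeeping is in place, Theorem~\ref{orbbd} supplies the cobordism directly.
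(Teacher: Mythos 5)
Up to reflecting the interval $[0,1]$, your proposal is precisely the paper's own proof: the paper takes $Y=Q\times\Delta^1$, performs the vertex cut at the $\{0\}$-end, marks the resulting simplices together with the uncut copy $Q\times\{1\}$, keeps the old vectors on the wall facets, assigns the truncated copy of $Q$ a vector $\lambda_0$ produced by Lemma~\ref{indl} applied to the vertex index sets, pulls back the bundle $\tau'\times\Id$ (which is your $p^{*}\tau$), and reads off the boundary of $W(Y_Q,\lambda,\tau)$ via Theorem~\ref{orbbd} and Lemma~\ref{cops}. One caveat, which applies equally to the paper's own write-up (where this step is compressed into ``one can check that $\lambda$ is indeed an rs-characteristic function''): your verification assumes that every nonempty intersection $\widetilde F_{i_1}\cap\cdots\cap\widetilde F_{i_r}\cap\widetilde T$ arises from wall facets passing through a common vertex of $Q$, but a nice manifold with corners can have positive-dimensional faces containing no vertex at all (the circle facet $C$ in Figure~\ref{egc3}(a) is such an example), and for those faces Lemma~\ref{indl} with $\mathcal{A}$ equal to the family of vertex index sets guarantees nothing, so $\xi_0$ could accidentally lie in the span of their wall vectors. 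The cure is the avoidance argument inside the proof of Lemma~\ref{indl} itself: choose the primitive vector $\xi_0$ outside the finite union of the submodules (each of rank at most $n-1$) spanned by $\lambda(F_{i_1}),\ldots,\lambda(F_{i_r})$ over \emph{all} nonempty intersections $F_{i_1}\cap\cdots\cap F_{i_r}\neq\emptyset$ in $Q$ with $r\le n-1$, not only those through a vertex; with this strengthened choice the rest of your bookkeeping, and the identification of the boundary pieces as $X$ and $k$ orbifold complex projective spaces, goes through unchanged.
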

\begin{proof}
Let $X$ be a $2n$-dimensional orientable locally standard torus orbifold with $k$ fixed points. So the orbit
space $Q := X/ T^n$ is an orientable smooth compact manifold with corners with $k$ vertices.
Let $\{(Q, \lambda'), (E_X, Q, \tau')\}$ be the combinatorial and topological data of $X$ as obtained in Subsection~\ref{axdef}, 
where $\lambda'$ is an r-characteristic function on $Q$ and $\tau' \colon E_X \to Q$ is an orientable principal $T^n$-bundle. 
Then by Theorem~\ref{clasi}, $X$ is equivariantly diffeomorphic to $X(Q, \lambda', \tau')$.
Suppose $Q$ is a subset of $\RR^{\ell -1}$ for some $\ell \geq n+1$. 
Let $\mathcal F(Q):=\{F_1, \ldots, F_{m}\}$ be the facets of $Q$, and let $\mathcal V(Q):=\{V_1, \ldots, V_k\}$ be the vertices of $Q$. 
Let $Y := Q \times \bigtriangleup^1 \subset \RR^{\ell}$ where  $\bigtriangleup^1=[0,1]$ is the $1$-simplex. 
Then $Y$ is an $(n+1)$-dimensional nice manifold with corners, whose facets are 
$$\mathcal F(Q\times \Delta^1)=\{F_1 \times \bigtriangleup^1, \ldots, F_{m} \times \bigtriangleup^1, Q \times \{0\}, Q \times \{1\}\}$$ 
where $\{0\}$ and $\{1\}$ are the vertices of $\bigtriangleup^1$.
Let $V_i^0 = V_i \times \{0\}$ for $i = 1, \ldots, k$. 
We perform vertex-cut of $Y$ around the vertices $V_i^0$ for  $i = 1, \ldots, k$ as follows.
Consider an open ball $B^{\ell}_i$ in $\RR^{\ell}$ around each vertex $ V_i^0$  of $Y \subset \RR^{\ell}$ such that the followings are satisfied.
\begin{enumerate}
\item $\bar{B^{\ell}_i} \cap Y$ is diffeomorphic as manifold with corners to the $(n+1)$-simplex in $\RR^{n+1}$ for each $i=1, \ldots, k$, where $\bar{B^{\ell}_i}$ denotes the closure of $B^{\ell}_i$ in $\RR^{\ell+1}$.
\item $\bar{B^{\ell}_i} \cap \bar{B^{\ell}_j} \cap Y=\emptyset$  when $i \neq j$.
\item $\bar{B_i^{\ell}} \cap Q \times \{1\}=\emptyset$ for all $i= 1, \ldots, k$.
\end{enumerate}
Then $Y_Q:=Y-\bigcup_{i=1}^k B_i^{\ell}$ is the desired vertex-cut, which is an 
orientable smooth nice  $(n+1)$-dimensional manifold with corners.
Let $S^{\ell}_i=\partial(\bar{B_i^{\ell}})$ for $i=1, \ldots, k$. 
Then $Q_{i} := S^{\ell}_i \cap Y$ is the  facet of $Y_Q$  corresponding to the vertex $V_i^{0}$ for each $i= 1, \ldots, k$. 
Since each vertex of $Q_i$ is an interior point of an edge of $Y$, and $Y$ is an $(n+1)$-dimensional nice manifold with corners, 
$Q_i$ is diffeomorphic as manifold with corners to the $n$-simplex for each $i= 1, 2, \ldots, k$. 
Let 
$$
\begin{array}{lcl}
	F^0_j	& 	=	&		F_j \times \bigtriangleup^1 - \bigcup_{i=1}^k B_i^{\ell} \quad \textrm{for } j= 1, \ldots, m,\\
	F^0_{m+1}&	= &	Q \times \{0\}- \bigcup_{i=1}^k B_i^{\ell},\\
	Q_{k+1} &=& 	Q \times \{1\}. 
	\end{array}
$$

\noindent So the facets and the vertices of $Y_Q$ are as follows:
$$
	\begin{array}{l l l}
	\mathcal F(Y_Q)	& =	 &\{  F_1^{0}, \ldots, F_{m+1}^{0}\} \cup \{Q_1, \ldots, Q_{k+1}\},\\
	\mathcal V(Y_Q)  	&= 	&\bigcup_{i=1}^{k+1} \mathcal V(Q_i),
	\end{array}
$$
where  $Q_i \cap Q_j \cap Y_Q$ is empty when $i \neq j$. 
Then  $Y_Q[Q_1, \ldots, Q_{k+1}]$ is a face-simple $(n+1)$-dimensional orientable manifold with marked facets $Q_1, \ldots, Q_{k+1}$.

We now define an  rs-characteristic function $\lambda$ on $Y_Q[Q_1, \ldots, Q_{k+1}]$.
Define  
$$
\lambda(F^0_j)=\begin{cases}\lambda'(F_j), & \textrm{for }j=1, \ldots, m\\
					      \lambda_0, &\textrm{for }j=m+1,
		         \end{cases} 
$$		         
where $\lambda_0$  is the vector obtained from Lemma \ref{indl}
in the following way.
For each vertex $V\in \mathcal V(Q_i)\subset \mathcal V(Y_Q)$ there exists a unique 
collection of facets
$F^0_{i_1}, \ldots, F^0_{i_n}\in \mathcal F(Y_Q)$ such that $V=F^0_{i_1}\cap\cdots\cap F^0_{i_n}\cap Q_i$.
Let $I_V:=\{i_1, \ldots, i_n\}\in \mathcal P_n[m]$ and let $\mathcal A:=\{I_V\mid V\in \mathcal V(Y_Q)\}$.
Then from Lemma~\ref{indl} there exist a primitive vector $\lambda_0\in \ZZ^n$ such that the set of $n$ vectors
$$\mathcal S_{i,V}=\{\lambda_0, \lambda(F^0_{i_1}), \ldots, \lambda(F^0_{i_{j-1}}), \widehat{\lambda(F^0_{i_j})}, \lambda(F^0_{i_{j+1}}), \ldots, \lambda(F^0_{i_n})\}$$
is linearly independent for all $j = 1, \ldots, n$ and $V \in \mathcal V(Y_Q)$.
One can check that $\lambda$ is indeed an rs-characteristic function on $Y_Q[Q_1, \ldots, Q_{k+1}]$.

On the other hand, $\tau'$ induces an orientable principal $T^n$-bundle $$\tau' \times \Id \colon E_X \times [0, 1] \to Q \times [0, 1].$$ 
So we have the pullback bundle $\tau \colon E_{Y_Q} = (\tau' \times \Id)^{-1}(Y_Q) \to Y_Q$ via the inclusion $Y_Q \hookrightarrow Y$. 
Note that $\tau \colon E_{Y_Q} \to Y_Q$ is an orientable smooth principal $T^n$-bundle. 
Then so is the pullback bundle  $\tau^i \colon E_i \to Q_i$  of $\tau$ via the inclusion $Q_i \hookrightarrow Y_Q$ for $i=1, \ldots, k+1$.
 So by Theorem~\ref{orbbd}, $W(Y_Q, \lambda, \tau)$ is an orientable effective $T^n$-orbifold 
whose boundary is the disjoint union $\bigcup_{i=1}^{k+1} (E_j/\sim_b)$ where $\sim_b$ is as defined in (\ref{equilam}). The space $E_{k+1}/\sim_b$
is  equivariantly diffeomorphic to $X(Q, \lambda', \tau')$, where $\tau' =\tau^{k+1} \colon E_X = E_{k+1} \to Q_{k+1}=Q$.

Let $\mathcal{F}(Q_i) =\{H_{i_1}, \ldots, H_{i_{n+1}}\} $. Then $ H_{i_j} = F_{i_j}^{0} \cap Q_i$ for a unique remaining  facet $F_{i_j}^{0}$ of $Y_Q [Q_1, \ldots, Q_{k+1}]$
for each $j=1,\ldots, n+1$.
Define a function $\lambda^i \colon \mathcal{F}(Q_i) \to \ZZ^n$ by 
$$\lambda^i(H_{i_j}) = \lambda(F^0_{i_j}) \quad \mbox{for} \quad j = 1, \ldots, n+1. $$ 
Then $\lambda^i$ is an r-characteristic function on $Q_i$ for all $i=1, \ldots, k$. 
So $\{(Q_i, \lambda^i)(E_i, Q_i, \tau^i)\}$ is a characteristic and topological data for $i=1, \ldots, k$. 
Note that the boundary component  $E_i/\sim_b$ is same as  $X(Q_i, \lambda^i, \tau^i) $, 
which is a quasitoric orbifold  over an $n$-simplex $Q_i$ for $i=1, \ldots, k$. 
Therefore by Lemma~\ref{cops}, each 
$E_i/\sim_b= X(Q_i, \lambda^i, \tau^i)$ is an orbifold complex projective space, which proves the theorem.
\end{proof}

We remark that in Theorem \ref{coborthm} if $X$ is a locally standard torus manifold then it is equivariantly cobordant to a disjoint union of some orbifold projective spaces. Still this theorem may be useful as there are examples of locally standard torus manifolds where it is quite complicated to compute cobordism invariabts, whereas there are several studies on topological invariants of orbifold projective spaces.

\begin{example}
Let $\bigtriangleup^2 = V_0V_1V_2$ be a triangle in $\RR^2$,  and let $C$ be a circle in the interior  of $\bigtriangleup^2$. 
Delete the open disk bounded by $C$ from $\bigtriangleup^2$, and let   $Q$ be the remaining subspace of $\bigtriangleup^2$.
A rational characteristic function $\lambda'$ on $Q$ is given as on the left of Figure \ref{egc1}. 
Let $Y :=Q \times [0,1]$.
Then the rs-characteristic function $\lambda$ on the  face-simple manifold with marked facets $Y_Q[Q_1,\ldots, Q_4]$ is given on the right of Figure \ref{egc1}, where
$Q_1,$ $Q_2$ and $Q_3$ are the facets of $Y_Q$ corresponding to the vertices $V_1 \times \{0\}, V_2 \times \{0\}$ and $V_3 \times \{0\}$ respectively, and $Q_4=Q\times \{1\}$. 
Since $H^2(Y_Q)=0$, any principal $T^2$ bundle $\tau$ over $Y_Q$ is trivial. 
By Theorem~\ref{orbbd}, the space $W(Y_Q, \lambda)$ is an orientable effective $T^2$-orbifold with boundary. Let $\mathcal{F}(Q_i) =\{H_{i_1}, H_{i_2}, H_{i_3}\} $ for $i=1,2,3$. 
Then $ H_{i_j} = F_{i_j}^{0} \cap Y_{Q_i}$ for a unique remaining facet 
$F_{i_j}^{0}$ of $Y_Q [ Q_1, \ldots, Q_{4}]$ for $i=1, 2, 3$. Define a function $\lambda^i \colon \mathcal{F}(Q_i) \to \ZZ^2$ by 
$$\lambda^i(H_{i_j}) = \lambda(F^0_{i_j}) \quad \mbox{for} \quad j = 1, 2, 3. $$ 
Then $\lambda^i$ is an r-characteristic function on $Q_i$ for $i=1, 2, 3$.
The r-characteristic function on $Q_4$ is same as on $Q$. Therefore, the boundary
 of $W(Y_Q, \lambda)$ is $$\bigsqcup_{i=1}^3 X(Q_i, \lambda^i) \bigsqcup X(Q, \xi)$$
where  $X(Q_i, \lambda^i)$ is an  orbifold complex projective space of real dimension $4$ for $i=1, 2, 3$.
\begin{figure}[ht]
        \centerline{
           \scalebox{0.80}{
            \input{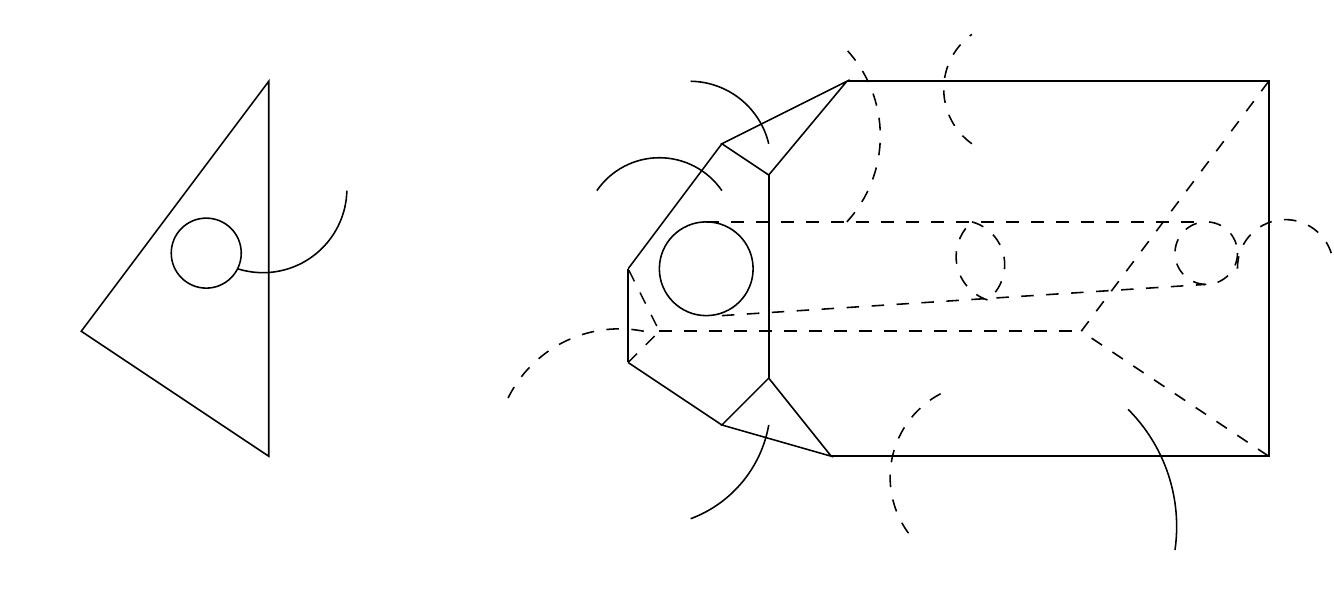_t}
            }
          }
       \caption {An r- and rs-characteristic function of a 2- and 3-dimensional nice manifold with corners respectively.}
        \label{egc1}
      \end{figure}
      \qed
\end{example}
The following corollary is a special case of Theorem~\ref{coborthm} when $k=0$.
\begin{corollary}\label{coborthm2}
Let $X$ be an orientable  $2n$-dimensional locally standard torus orbifold without a fixed point. Then $X$ is the boundary of a $(2n+1)$-dimensional orientable effective orbifold with $T^n$-action.
\end{corollary}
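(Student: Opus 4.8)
The plan is to realize Corollary~\ref{coborthm2} as the $k=0$ case of the proof of Theorem~\ref{coborthm}, which simplifies considerably because the absence of fixed points means there are no vertices to cut off. First I would pass from $X$ to $X(Q,\lambda',\tau')$ using Theorem~\ref{clasi}, where $\{(Q,\lambda'),(E_X,Q,\tau')\}$ is the combinatorial and topological data of $X$ and $Q=X/T^n$. Since fixed points of the $T^n$-action correspond precisely to vertices of $Q$, the hypothesis that $X$ has no fixed point means that $Q$ has no vertices; equivalently, no $n$ facets of $Q$ have a common point, so every nonempty intersection $F_{i_1}\cap\cdots\cap F_{i_j}$ of facets of $Q$ satisfies $j\le n-1$ and the vectors $\lambda'(F_{i_1}),\ldots,\lambda'(F_{i_j})$ span a subspace of $\RR^n$ of dimension at most $n-1$.

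Next I would form the $(n+1)$-dimensional orientable nice manifold with corners $Y:=Q\times[0,1]$, whose facets are the cylinders $F_i\times[0,1]$ together with the two caps $Q\times\{0\}$ and $Q\times\{1\}$. Because $Q$ has no vertices, neither does $Y$, so marking the single facet $P_1:=Q\times\{1\}$ turns $Y[P_1]$ into a face-simple manifold with a marked facet: the defining condition $\mathcal V(Y)=\mathcal V(P_1)$ is the trivial equality $\emptyset=\emptyset$. On the remaining facets I would define an rs-characteristic function (Definition~\ref{isofun}) by $\eta(F_i\times[0,1]):=\lambda'(F_i)$ and $\eta(Q\times\{0\}):=\lambda_0$, where $\lambda_0\in\ZZ^n$ is a primitive vector chosen, by the argument in the proof of Lemma~\ref{indl}, to avoid the finitely many proper subspaces of $\RR^n$ spanned by $\{\lambda'(F_{i_1}),\ldots,\lambda'(F_{i_j})\}$ over all nonempty intersections $F_{i_1}\cap\cdots\cap F_{i_j}$ of facets of $Q$; such a $\lambda_0$ exists since $\ZZ^n$ is not a finite union of proper sublattices. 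Verifying that $\eta$ is an rs-characteristic function then splits into two cases: among the cylinders $F_i\times[0,1]$ independence is inherited from $\lambda'$, while an intersection involving $Q\times\{0\}$ produces the vectors $\lambda_0,\lambda'(F_{i_1}),\ldots,\lambda'(F_{i_j})$ with $j\le n-1$, which are independent precisely by the choice of $\lambda_0$.

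Finally I would equip $Y$ with the orientable principal $T^n$-bundle $\mu:=\tau'\times\Id\colon E_X\times[0,1]\to Q\times[0,1]=Y$ and apply Theorem~\ref{orbbd} to the data $\{(Y[P_1],\eta),(E_X\times[0,1],Y,\mu)\}$, obtaining a $(2n+1)$-dimensional orientable effective $T^n$-orbifold $W(Y,\eta,\mu)$. The crucial point is that a remaining facet contributes no boundary (its torus direction is collapsed by $\sim_b$), so by marking only the cap $Q\times\{1\}$ the whole other end $Q\times\{0\}$ is capped off and $\partial W(Y,\eta,\mu)$ consists of exactly the single orbifold attached to $P_1$. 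Identifying $P_1=Q\times\{1\}$ with $Q$, its induced r-characteristic function sends $F_i\times\{1\}\mapsto\eta(F_i\times[0,1])=\lambda'(F_i)$ and its induced bundle is $\tau'$, so this boundary orbifold equals $X(Q,\lambda',\tau')\cong X$. Hence $X$ is the boundary of $W(Y,\eta,\mu)$, as claimed. I expect the only genuinely nontrivial steps to be confirming the equivalence ``no fixed point $\Leftrightarrow$ no vertex'' (which is what lets $\lambda_0$ need to avoid only rank-$(\le n-1)$ subspaces and makes the vertex condition vacuous) and checking that $\eta$ is a valid rs-characteristic function; both are handled by the combinatorics of Lemma~\ref{indl} and the local analysis already carried out in Theorem~\ref{orbbd}.
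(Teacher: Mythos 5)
Your proposal is correct and is essentially the paper's own argument: the paper proves this corollary by citing Theorem~\ref{coborthm} at $k=0$, and your construction (take $Y=Q\times[0,1]$ with the single marked facet $Q\times\{1\}$, extend $\lambda'$ by a vector $\lambda_0$ on $Q\times\{0\}$, and apply Theorem~\ref{orbbd}) is exactly what that proof reduces to when there are no vertices to cut. If anything, your treatment of $\lambda_0$ is slightly more careful than the paper's: since $\mathcal V(Y_Q)=\emptyset$ when $k=0$, the paper's vertex-indexed invocation of Lemma~\ref{indl} imposes no constraint, whereas you correctly require $\lambda_0$ to avoid the spans of $\{\lambda'(F_{i_1}),\ldots,\lambda'(F_{i_j})\}$ over \emph{all} nonempty facet intersections of $Q$ (which exist even without vertices, e.g.\ circle facets), which is what the rs-characteristic condition actually needs.
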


\begin{corollary}\label{coborthm3}
Let $X$ be a $4$-dimensional orientable  locally standard torus manifold without a fixed point. Then $X$ is the boundary of an orientable $5$-dimensional manifold with  $T^2$-action.
\end{corollary}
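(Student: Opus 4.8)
The plan is to reduce $X$, up to equivariant cobordism, to a disjoint union of standard building blocks each of which visibly bounds a smooth $T^2$-manifold; this sidesteps the orbifold singularities that the general construction of Theorem~\ref{orbbd} (and hence Corollary~\ref{coborthm2}) would otherwise produce. First I would observe that since $n=2$ and $X$ has no fixed point, the orbit space $Q=X/T^2$ is a compact orientable $2$-dimensional nice manifold with corners having no vertices. Consequently it has no corners at all, so $\partial Q$ is a disjoint union of circles and $Q$ is a compact orientable surface with (possibly empty) boundary. I would then split into the case $\partial Q\neq\emptyset$ and the free case $\partial Q=\emptyset$.

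Suppose first that $\partial Q\neq\emptyset$. Then $H^2(Q)=0$, so by Theorem~\ref{clasi} the principal $T^2$-bundle is trivial and Lemma~\ref{equi_classi} applies. Because $Q$ has no vertices, the closed surface $S_Q$ is cut only along discs, since polygons and eye-shapes both carry vertices; hence the decomposition of Lemma~\ref{equi_classi} contains neither quasitoric nor $S^4$ summands. Thus $X$ is $T^2$-weakly equivariantly diffeomorphic to a connected sum $(T^2\times S_Q)\,\#\,r(S^1\times S^3)$ for some $r\geq 0$, where $S^1\times S^3$ is the torus manifold of Example~\ref{disc1}. Every summand here is a genuine smooth manifold, which is the crucial point.

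Next I would bound each block equivariantly by a manifold. The closed orientable surface $S_Q$ bounds a handlebody $H$, so $T^2\times S_Q=\partial(T^2\times H)$ with $T^2$ acting on the first factor; and $S^1\times S^3=\partial(D^2\times S^3)$, where the first circle of $T^2$ rotates $D^2$ (inducing the left multiplication on $S^1=\partial D^2$) and the second circle acts on $S^3$ exactly as in Example~\ref{disc1}. Since an equivariant connected sum is equivariantly cobordant to the disjoint union of its summands (as used in the proof of Lemma~\ref{equi}), $X$ is cobordant to $(T^2\times S_Q)\sqcup r(S^1\times S^3)$. Gluing this cobordism to the disjoint union of the bounding manifolds just exhibited, and invoking transitivity of the cobordism relation, yields an oriented $5$-manifold $W$ with $T^2$-action and $\partial W\cong X$. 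For the remaining free case $\partial Q=\emptyset$, the map $X\to S_Q$ is a principal $T^2$-bundle over a closed surface, so $X\cong S(L_1)\times_{S_Q}S(L_2)$ for complex line bundles $L_1,L_2$; capping the second sphere bundle by its disc bundle gives the oriented $5$-manifold $W:=S(L_1)\times_{S_Q}D(L_2)$ with $T^2$-action and $\partial W=X$.

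The hard part is precisely the manifold-versus-orbifold distinction rather than the bounding itself. The object produced by Corollary~\ref{coborthm2} is in general only an effective orbifold, because the auxiliary primitive vector $\lambda_0$ furnished by Lemma~\ref{indl} cannot in general be chosen so that $\{\lambda_0,\lambda'(F_j)\}$ is a lattice basis simultaneously for every boundary facet $F_j$ of $Q$; a non-unimodular pair forces an orbifold singularity along the corresponding orbit stratum. Routing the argument through the low-dimensional classification of Lemma~\ref{equi_classi}, whose summands are honest manifolds bounding honest manifolds, is exactly what removes these singularities, and the genus-handlebody, disc-bundle, and equivariant one-handle constructions are all standard, so no delicate estimates remain once the reduction is in place.
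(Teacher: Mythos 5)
Your proof is correct, but it takes a genuinely different route from the paper's. The paper never decomposes $X$: it runs the general orbifold construction of Corollary~\ref{coborthm2} on $Y=Q\times\bigtriangleup^1$, observes that the orbifold singularities of $W(Y,\lambda,\tau)$ can only occur over $C_i\times\{0\}$, where $C_1,\ldots,C_k$ are the boundary circles of $Q$ (precisely the non-unimodularity of the pairs $\{\lambda_0,\lambda'(C_i)\}$ that you diagnose), and then excises invariant collar neighborhoods $\pi^{-1}(U_i)$ of these loci. The result is an honest orientable $T^2$-manifold whose boundary is $X\sqcup\bigsqcup_{i=1}^k C_i\times\bigl((T^2\times\bigtriangleup^1)/\sim_b\bigr)$, each factor $(T^2\times\bigtriangleup^1)/\sim_b$ being identified, via the hyper-characteristic function formalism of \cite{SS}, with a lens space $L(p_i,q_i)$ with its usual $T^2$-action; the proof then closes by quoting \cite{Hanke} and \cite{SS} that lens spaces bound equivariantly, so each $C_i\times L(p_i,q_i)$ does too. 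You instead bypass the orbifold construction entirely: when $\partial Q\neq\emptyset$ you invoke the connected-sum classification of Lemma~\ref{equi_classi}, note that the absence of vertices kills the quasitoric and $S^4$ summands, and bound the surviving blocks explicitly ($T^2\times S_Q=\partial(T^2\times H)$ for a handlebody $H$, and $S^1\times S^3=\partial(D^2\times S^3)$ compatibly with Example~\ref{disc1}); when $\partial Q=\emptyset$ the action is free and you cap the principal bundle $S(L_1)\times_{S_Q}S(L_2)$ by the disc bundle $S(L_1)\times_{S_Q}D(L_2)$. In effect your argument is the no-vertex specialization of the paper's own proof of Theorem~\ref{thm_4d-loc-tor}, and it buys self-containedness: the bounding manifolds are explicit and no appeal to the equivariant lens-space bounding results of \cite{Hanke,SS} is needed. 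What the paper's route buys is uniformity and generality of method: it treats the closed and bounded orbit space cases at once (when $k=0$ there is simply nothing to excise), it stays within the framework of Theorem~\ref{orbbd} that drives the rest of the paper, and it exhibits the desingularization-by-excision technique, which is the step most likely to generalize beyond dimension four, where a classification like Lemma~\ref{equi_classi} is unavailable.
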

\begin{proof} We are using the same notation as in the proof of Theorem~\ref{coborthm}.
Let $Q=X/T^2$ be the orbit space and $\{(Q, \lambda'), (E_X, \tau')\}$ be the combinatorial and topological data of $X$. 
Then  $X$ is $T^2$-equivariantly diffeomorphic to $X(Q, \lambda', \tau')$ by Theorem~\ref{clasi}. 
Since $X$ is $4$-dimensional, the orbit space $Q$ is a $2$-dimensional nice manifold with corners, and since $X$ has no fixed point, $Q$ has no vertices.
Therefore $Q$ is a bounded surface, and $Y_Q = Y = Q \times \bigtriangleup^1$.
Let $C_1, \ldots, C_k$ be the
boundary components of $Q$. By Corollary \ref{coborthm2}, $W(Y, \lambda, \tau)$ is an orientable effective $T^2$-orbifold with the boundary $X(Q,\lambda', \tau') \cong X$. 
Let $\pi \colon W(Y, \lambda,\tau) \to Y$ be the orbit map.
Notice that the orbifold singularity of $W(Y, \lambda, \tau)$ may occur only in $\bigsqcup_{i=1}^k \pi^{-1}(C_i \times \{0\})$. 
Let $U_i$ be a neighborhood of $C_i \times \{0\}$ in $Y$ for $i=1, \ldots, k$ such that:
\begin{enumerate}
	\item $U_i$ is diffeomorphic as manifold with corners to $C_i \times \RR^2_{\geq 0}$,

	\item $\bar{U_i} \subset Y$ is diffeomorphic as manifold with corners to $C_i \times \bigtriangleup^2$,

	\item $\bar{U_i} \cap \bar{U_j} = \emptyset$ and $\bar{U_i} \cap (Q \times \{1\}) = \emptyset$ for $i, j = 1, \ldots, k$ and $i \neq j$.
\end{enumerate}
Then $\bar{U_i} \cap (Y - \bigcup_{i=1}^k U_i)$ is diffeomorphic to $C_i \times \bigtriangleup^1$ as manifold with corners.
Let $W = W(Y, \lambda,\tau) - \bigcup_{1=1}^k \pi^{-1}(U_i)$, which is an orientable $T^2$-manifold with boundary. 
Then
$$\partial(W)= \bigsqcup_{i=1}^k ((T^2 \times C_i \times \bigtriangleup^1)/\sim_b)\bigsqcup X.$$
Note that $((T^2 \times C_i \times \bigtriangleup^1)/\sim_b)$ is $T^2$-equivariantly
 diffeomorphic to $C_i \times ((T^2 \times \bigtriangleup^1)/\sim_{b})$.
The facets of $C_i \times \bigtriangleup^1$ are $(C_i \times \bigtriangleup^1) \cap F_{i_j}$ for unique facets $F_{i_1}$ and $F_{j_2}$ of $Y$. The restriction of $\lambda$ on the facets of $\bigtriangleup^1 = c_i \times \bigtriangleup^1 \subset C_i \times \bigtriangleup^1$ is given by Figure \ref{torcob_2} $(a)$. 
\begin{figure}[ht]
        \centerline{
           \scalebox{0.80}{
            \input{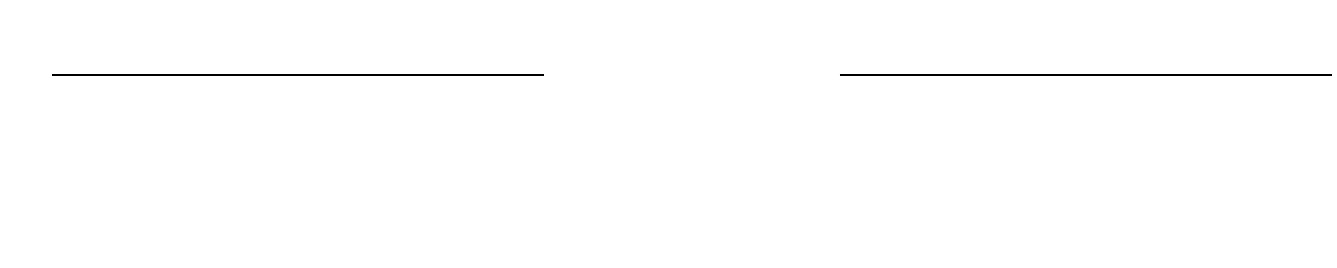_t}
            }
          }
       \caption {}
        \label{torcob_2}
      \end{figure}
Note that vectors defined by $\lambda$ are all primitive. So we may assume $(a, b) = (1, 0)$ and $(c, d) = (-q_i, p_i)$ after an automorphism of $T^2$. Then we have the hyper characteristic function (see Definition 2.1 in \cite{SS}) on $\bigtriangleup^1$ given by Figure \ref{torcob_2} $(b)$. So following section 2 of \cite{SS}, one can get that $(T^2 \times \bigtriangleup^1)/\sim_{b}$ is weak-equivariantly diffeomorphic to the lens space $L(p_i, q_i)$ with the usual $T^2$ action for $i=1, \ldots, k$. 


It is shown in \cite{Hanke} and \cite{SS} that any lens space $L(p_i, q_i)$ is $T^2$-equivariantly a boundary. 
Thus $C_i \times L(p_i, q_i)$ is a $T^2$-equivariantly boundary for each $i \in \{1, \ldots, k\}$, which proves the corollary. 
\end{proof}

We now give a couple of  equivariant cobordism results on orientable $4$-dimensional locally standard torus orbifolds.

\begin{theorem}\label{thm_4d-loc-tor}
 Let $M$ be a locally standard torus manifold over $2$-dimensional polytope $Q$ with $\partial{Q} \neq \emptyset$. 
 Then $M$ is equivariantly cobordant (by a $5$-dimensional $T^2$-manifold) to some copies of $\CC P^2$.
\end{theorem}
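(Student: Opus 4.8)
The plan is to reduce at once to the quasitoric case and then run an induction on the number of edges of the orbit polygon, peeling off one copy of $\pm\CC P^2$ at each step. Since $Q$ is a $2$-dimensional polytope it is contractible, so $H^2(Q)=0$ and the principal $T^2$-bundle $\tau$ associated to $M$ is trivial; by Theorem \ref{clasi}, $M$ is $T^2$-equivariantly diffeomorphic to $X(Q,\lambda)$, where $\lambda$ is a genuine characteristic function (not merely rational) because $M$ is a manifold. Thus $M$ is a simply connected $4$-dimensional quasitoric manifold over a $k$-gon, and it suffices to show that the class $[X(Q,\lambda)]$ lies in the subgroup of $\mathcal{OC}_4$ generated by $[\CC P^2]$, with reversed-orientation copies $\overline{\CC P^2}=-\CC P^2$ counted as copies of $\CC P^2$.

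I would induct on $k=|\mathcal F(Q)|$. When $k=3$, $Q$ is a triangle and $X(Q,\lambda)$ is a quasitoric manifold over a simplex, hence equivariantly diffeomorphic to $\CC P^2$ up to orientation by the manifold case of Lemma \ref{cops}; this is the base case. For $k\ge 4$ the combinatorial input is that the cyclic sequence $\lambda(F_1),\dots,\lambda(F_k)$ (consecutive pairs forming bases of $\ZZ^2$) admits a \emph{blow-downable} edge $F_i$, namely one with $\lambda(F_i)=\pm\lambda(F_{i-1})\pm\lambda(F_{i+1})$, unless $k=4$ and $M$ is a Hirzebruch surface; this is exactly the classification content behind \cite{OR}. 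Equivalently, $X(Q,\lambda)$ is the equivariant blow-up $M'\#\overline{\CC P^2}$ of a quasitoric manifold $M'$ over the $(k-1)$-gon obtained by contracting $F_i$, the connected sum being taken at a fixed point as in Subsection \ref{csum}. Since an equivariant connected sum is equivariantly cobordant to the disjoint union of its summands (the fact used in the proof of Lemma \ref{equi}), we obtain $[M]=[M']+[\overline{\CC P^2}]=[M']-[\CC P^2]$, and induction disposes of all non-minimal cases.

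It remains to treat the minimal squares, i.e.\ the Hirzebruch surfaces given by the fans of Figure \ref{egc5}(A) with parameter $b$; write $H_b$ for this manifold. The key trick I would use is a pentagon relation: blowing up the vertex $F_1\cap F_2$ inserts an edge with characteristic vector $\lambda(F_1)+\lambda(F_2)$, and in the resulting quasitoric manifold over a pentagon the adjacent edge $F_2$ now has characteristic vector equal to the sum of its two neighbours, so it can be blown down, yielding $H_{b+1}$. Hence $H_b\#\overline{\CC P^2}\cong H_{b+1}\#\overline{\CC P^2}$, and cancelling $[\overline{\CC P^2}]$ in the group $\mathcal{OC}_4$ gives $[H_b]=[H_{b+1}]$ for every $b$; therefore $[H_b]=[H_0]=[S^2\times S^2]=0$, the last equality because $S^2\times S^2=\partial(S^2\times D^3)$ equivariantly. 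Combining the three steps, $[M]$ is an integral multiple of $[\CC P^2]$, so $M$ is equivariantly cobordant to a disjoint union of copies of $\CC P^2$ (with appropriate orientations), the cobordism being a $5$-dimensional $T^2$-manifold assembled from connected-sum traces and cylinders. The hard part is the purely combinatorial step that produces a blow-downable edge in every non-minimal case and isolates the Hirzebruch surfaces as the only obstruction; once that $SL_2(\ZZ)$-sequence classification is in hand, the pentagon relation and the nullcobordisms of $S^2\times S^2$ and $S^4$ make the rest routine.
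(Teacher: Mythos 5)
Your proof covers the statement only under its literal reading, with $Q$ an honest convex polygon, and that is not the generality in which the paper proves (or means) the theorem. For a genuine polytope the hypothesis $\partial Q\neq\emptyset$ is vacuous; the paper's own proof starts by invoking Lemma \ref{equi_classi}, which is stated for an arbitrary nice compact orientable $2$-dimensional manifold with corners with nonempty boundary, and its first sentence is devoted to discarding the summands $T^2\times S_Q$, $S^4$ and $S^1\times S^3$ that arise when $Q$ has positive genus, circle boundary components, or eye-shaped boundary components. Your opening move --- ``$Q$ is contractible, hence the bundle is trivial and $M$ is a simply connected quasitoric manifold over a $k$-gon'' --- fails in that generality: only $H^2(Q)=0$ survives, $M$ need not be simply connected, and your induction on the edges of the orbit polygon never meets $T^2\times S_Q$, $S^1\times S^3$ or $S^4$. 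So, measured against what the paper actually proves, there is a gap; it is filled by prepending exactly the decomposition of Lemma \ref{equi_classi} (equivariant connected sum, along principal orbits, of quasitoric manifolds with those three kinds of equivariantly null-cobordant pieces), after which your argument can be run on each quasitoric summand.

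On the quasitoric core your route is genuinely different from the paper's and is correct modulo the input you acknowledge. The paper quotes Theorem 6.6 of \cite{Sar2} as a black box, whereas you re-derive it: induction on the number of edges via blow-downs, each contributing $\pm[\CC P^2]$ because an equivariant connected sum is cobordant to the disjoint union of its summands; the Orlik--Raymond classification \cite{OR} supplying a blow-downable edge in all cases except Hirzebruch surfaces; and those killed by the elementary transformation $H_b\,\#\,\overline{\CC P^2}\cong H_{b+1}\,\#\,\overline{\CC P^2}$ (both sides are quasitoric over the same pentagon with the same characteristic data, so Theorem \ref{thm_equiv} applies), cancellation in the cobordism group, and $[S^2\times S^2]=0$ since $S^2\times S^2=\partial(S^2\times D^3)$ equivariantly. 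Two cautions: the cancellation must be performed in the group of torus \emph{manifold} cobordism classes rather than in $\mathcal{OC}_4$, since the theorem demands a $5$-dimensional $T^2$-manifold as cobordism (all your relations hold there, so this is a matter of bookkeeping); and in the pentagon move the edge you blow down must have its two neighbours in the square carrying anti-parallel vectors --- for the fan $(1,0),(0,1),(-1,b),(0,-1)$ one blows up the vertex where the edges $(0,1)$ and $(-1,b)$ meet and blows down the edge $(-1,b)$; with the other choice of vertex the needed vector identity fails unless $b=0$. What the two approaches buy: yours is self-contained modulo \cite{OR} and replaces explicit bounding manifolds by a formal cancellation, while the paper's (and \cite{Sar2}'s, cf.\ Corollary \ref{cob_hir_sur}) produces the bounding $5$-manifolds $W(P,\eta)$ explicitly.
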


\begin{proof}
We stick to the notations of Lemma \ref{equi_classi}. Since $S^4$, $S^1 \times S^3$ and $T^2 \times S_Q$ are $T^2$-equivariantly boundaries, 
by Lemma~\ref{equi_classi}  $M$ is $T^2$-equivariantly cobordant to the connected sum $M_1 \sharp \cdots \sharp M_k$
of some $4$-dimensional quasitoric manifolds $M_1, \ldots, M_k$ by a $5$-dimensional $T^2$-manifold.  Since equivariant connected sum of two manifolds are equivariantly cobordant to the disjoint union of them,
$M_1 \sharp \cdots \sharp M_k$ is equivariantly cobordant to the disjoint union $~M_1 \sqcup\cdots \sqcup M_k$.
By Theorem 6.6 of \cite{Sar2}, we see that each $M_i$ is equivariantly cobordant to some copies of $\CC P^2$ by a $5$-dimensional $T^2$-manifold. 
This proves the theorem.
\end{proof}

In the following theorem we stick to the notations in Subsection~\ref{orb_hir_sur}. 

\begin{theorem}\label{bdd_orb_hir_sur}
Let $X$ be an orbifold  Hirzebruch surface whose  fan is given in Figure~\ref{egc5} $(B)$. 
If either $(a_1, b_1)=\pm (a_3, b_3)$ or $(a_2, b_2) = \pm (a_4, b_4)$, then $X$ is equivariantly a  boundary, i.e., $X$ is the boundary
of a $5$-dimensional  orientable $T^2$-orbifold.
\end{theorem}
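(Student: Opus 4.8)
The plan is to reduce to the combinatorial model and then produce the bounding orbifold directly by the construction $W(Y,\eta,\mu)$ of Section~\ref{def}. By the discussion in Subsection~\ref{orb_hir_sur}, $X$ is $T^2$-equivariantly diffeomorphic to $X(P^2,\xi)$, where $P^2$ is a rectangle with edges $V_0V_1,V_1V_2,V_2V_3,V_3V_0$ and $\xi(V_iV_{i+1})=\epsilon_i(a_{i+1},b_{i+1})$. The two hypotheses are interchanged by the $90^\circ$ rotation of $P^2$ swapping the two pairs of opposite edges, so I would treat the case $(a_1,b_1)=\pm(a_3,b_3)$ and remark that the other case is identical. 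Geometrically this condition forces the opposite edges $V_0V_1$ and $V_2V_3$ to carry the same characteristic vector, so that $X$ becomes an orbifold $2$-sphere bundle over the orbifold $\CC P^1$ whose orbit space is the segment in the complementary direction; such a sphere bundle should bound the associated $3$-disk bundle, and the content of the proof is to realize this bounding orbifold precisely.

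Concretely, I would take $Y:=R\times[0,1]$, where $R$ is a $2$-dimensional bigon (an eye-shape as in Example~\ref{eye-shape1}), viewed as the $S^1$-orbit space of the $3$-disk $D^3=\{(z,s)\in\CC\times\RR : |z|^2+s^2\le 1\}$: its two edges are the ``axis'' edge $G$ (the image of $z=0$) and the ``outer'' edge $A$ (the image of $\partial D^3=S^2$), meeting at the two poles $c_\pm$. Then $Y$ is an orientable nice $3$-dimensional manifold with corners whose four facets are $P_1:=A\times[0,1]$, $F_1:=G\times[0,1]$, $F_2:=R\times\{0\}$, $F_3:=R\times\{1\}$, and whose four vertices $(c_\pm,0),(c_\pm,1)$ all lie on $P_1$; I would mark $P_1$, so that $Y[P_1]$ is a face-simple manifold with a single marked facet. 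Define $\eta$ on the remaining facets by $\eta(F_1)=(a_1,b_1)$, $\eta(F_2)=\epsilon_3(a_4,b_4)$, $\eta(F_3)=\epsilon_1(a_2,b_2)$. Since consecutive fan vectors are linearly independent, the pairs $(a_1,b_1),(a_4,b_4)$ and $(a_1,b_1),(a_2,b_2)$ (which label edges of $P^2$ adjacent at $V_0$ and $V_1$) are linearly independent, while $F_2\cap F_3=\emptyset$; hence $\eta$ is an rs-characteristic function on $Y[P_1]$.

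Because $Y$ is contractible, $H^2(Y)=0$, so I take the principal $T^2$-bundle $\mu$ to be trivial, and Theorem~\ref{orbbd} then yields a $5$-dimensional orientable effective $T^2$-orbifold $W(Y,\eta,\mu)$ whose boundary is the single locally standard torus orbifold $X(P_1,\xi_1,\mu_1)$. The key computation is to identify this boundary with $X$. The rectangle $P_1=A\times[0,1]$ has four edges: $\{c_-\}\times[0,1]$ and $\{c_+\}\times[0,1]$ are the two components of $P_1\cap F_1$, so each inherits $\eta(F_1)=(a_1,b_1)$, while $A\times\{0\}$ and $A\times\{1\}$ inherit $\epsilon_3(a_4,b_4)$ and $\epsilon_1(a_2,b_2)$ from $F_2$ and $F_3$. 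Matching these against $\xi$ on $V_0V_1,V_2V_3,V_3V_0,V_1V_2$ via the identity $\delta\in\Aut(\ZZ^2)$, they agree edge-by-edge up to sign precisely because $(a_3,b_3)=\pm(a_1,b_1)$. Thus the induced r-characteristic function $\xi_1$ on $P_1$ is equivalent to $\xi$, both bundles are trivial, and Theorem~\ref{thm_equiv} gives $X(P_1,\xi_1,\mu_1)\cong X(P^2,\xi)\cong X$. Hence $X$ bounds $W(Y,\eta,\mu)$ equivariantly.

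The main obstacle, and the only place the hypothesis is used, is this boundary matching: the construction reproduces $X$ exactly when the two opposite edges $V_0V_1,V_2V_3$ can be assigned the common vector $\eta(F_1)$ carried by the single axis facet $F_1$, i.e. exactly when $(a_1,b_1)=\pm(a_3,b_3)$ (and symmetrically when $(a_2,b_2)=\pm(a_4,b_4)$). I would also need to verify carefully that $Y=R\times[0,1]$ is nice and that $\mathcal V(Y)=\mathcal V(P_1)$, so that $\partial W$ has a single component rather than several; this reduces to the straightforward face-count showing that the four corners $(c_\pm,0),(c_\pm,1)$ of $Y$ are precisely the four vertices of the marked facet $P_1$.
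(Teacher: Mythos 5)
Your proposal is correct, and it takes a genuinely different (and more economical) route than the paper. The paper also builds a $3$-dimensional face-simple manifold and applies Theorem~\ref{orbbd}, but its manifold $P$ carries \emph{three} marked facets: the rectangle $Q_1$ (over which the boundary component is $X$) and two eye-shapes $Q_2,Q_3$; consequently $\partial W(P,\eta)$ has three components, and the proof must then invoke Example~\ref{bdd_lens} to see that the two torus orbifolds over eye-shapes bound, and finally use the gluing/transitivity of the cobordism relation to cap those components off. Your choice $Y=R\times[0,1]$, with the single marked facet $P_1=A\times[0,1]$, produces a bounding orbifold whose boundary is already exactly one component, so Example~\ref{bdd_lens} and the gluing step are not needed; the price is the extra identification step $X(P_1,\xi_1,\mu_1)\cong X(P^2,\xi)$ via Theorem~\ref{thm_equiv}, which you carry out correctly (with $\delta=\mathrm{Id}$ and the per-facet sign freedom that Definition~\ref{def_equiv} allows, the hypothesis $(a_1,b_1)=\pm(a_3,b_3)$ entering precisely at the edge $V_2V_3$). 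Both arguments rest on the same geometric insight — the hypothesis lets the two opposite edges of the Hirzebruch rectangle be absorbed into a single remaining facet of the $3$-manifold carrying the one vector $(a_1,b_1)$ — and indeed both share the same mild technical wrinkle: the intersection of that remaining facet with the marked rectangle is disconnected (two opposite edges), so the induced function $\xi_1$ in~(\ref{hr to r}) must be read component-wise; this is how the paper's own construction must be interpreted as well, and it affects nothing in the proofs of Lemma~\ref{lemma:hr to r} or Theorem~\ref{orbbd}. Your checks that $Y$ is nice, orientable, contractible (so $\mu$ is trivial), that $\mathcal V(Y)=\mathcal V(P_1)$, and that $\eta$ satisfies the rs-condition (the only nonempty intersections of remaining facets are $F_1\cap F_2$ and $F_1\cap F_3$, whose vector pairs span cones of the fan, while $F_2\cap F_3=\emptyset$) are all sound.
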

\begin{proof}
Suppose $(a_1, b_1)=\pm (a_3, b_3)$. Let $P$ be the face-simple manifold  with marked facets $Q_1, Q_2$ and $Q_3$ as in Figure \ref{egc4} $(B)$.
Note that $Q_2$ and $Q_3$ are eye-shapes, and $P$ is orientable, smooth and contractible. Then we can define an rs-characteristic function $\eta$ on $P[ Q_1, Q_2, Q_3]$, as in Figure \ref{egc4} $(B)$. 
This induces an r-characteristic function $\eta^i$ on $Q_i$ for $i=1, 2, 3$.
\begin{figure}[ht]
        \centerline{
           \scalebox{0.80}{
            \input{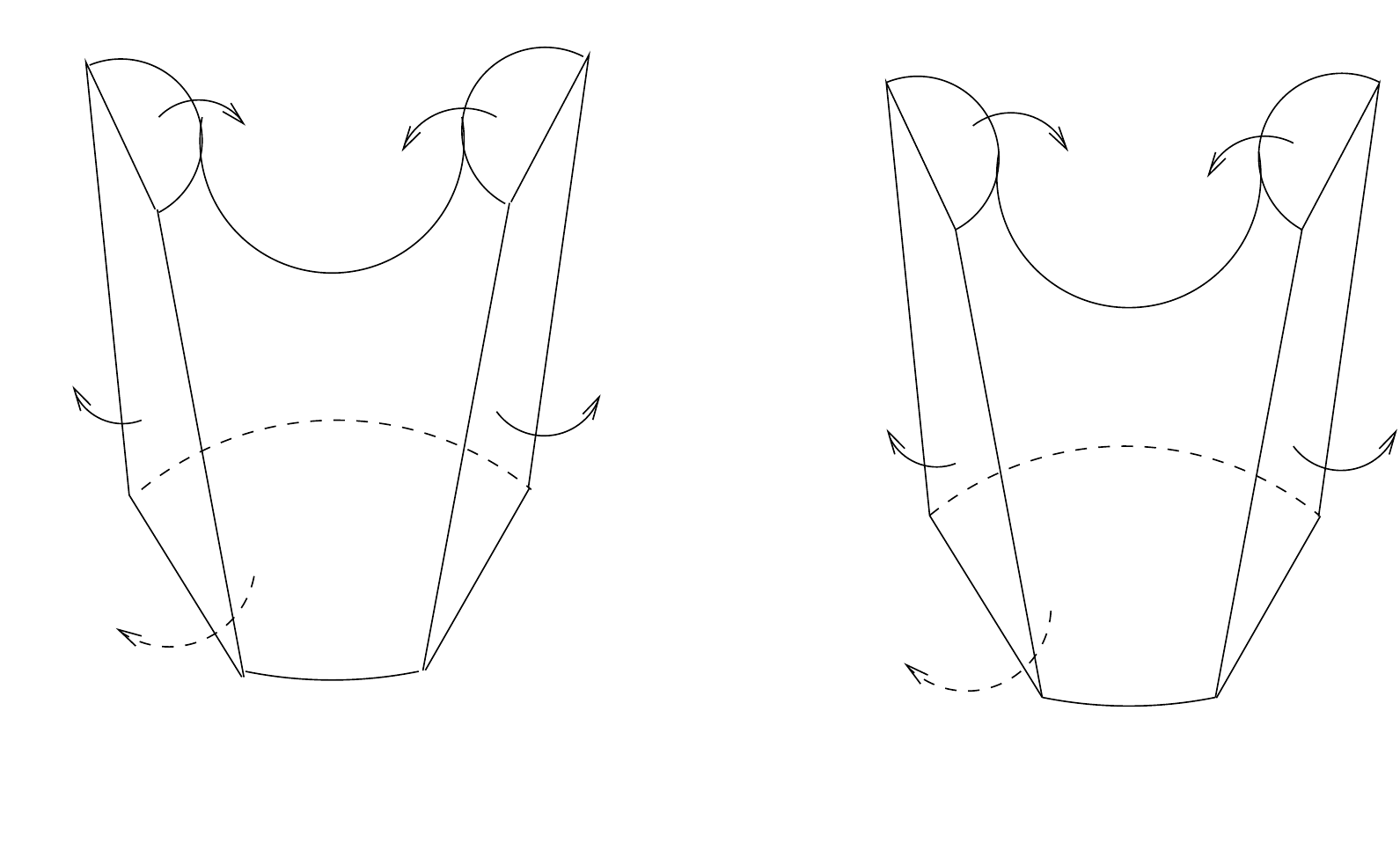_t}
            }
          }
       \caption {An rs-characteristic functions on a face-simple $3$-dimensional manifold.}
        \label{egc4}
      \end{figure}
      
By Theorem~\ref{orbbd} the $T^2$-space $W(P, \eta)$ is a compact orientable
effective orbifold with  boundary  given by 
$$\partial(W(P, \eta))=X(Q_1, \eta^1) \sqcup X(Q_2, \eta^2) \sqcup X(Q_3, \eta^3)$$ 
where $X(Q_1, \eta^1)$ is $T^2$-equivariantly diffeomorphic to the orbifold Hirzebruch surface $X$ and $X(Q_i, \eta^i)$ is a torus orbifold over an eye-shape for $i=2, 3$. 
From Example \ref{bdd_lens} we can see that $X(Q_i, \eta^i)$ is $T^2$-equivariantly the boundary of a $5$-dimensional orientable orbifold with $T^2$ action for $i=2, 3$. 
This proves the theorem when $(a_1, b_1)=\pm (a_3, b_3)$.
The case when $(a_2, b_2) = \pm (a_4, b_4)$ is similar. 
\end{proof}

\begin{corollary}\label{cob_hir_sur}
If $X$ is a Hirzebruch surface, then $X$ bounds a $5$-dimensional orientable $T^2$-manifold.
\end{corollary}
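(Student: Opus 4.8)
The plan is to specialize the proof of Theorem~\ref{bdd_orb_hir_sur} and then upgrade its conclusion from a bounding \emph{orbifold} to a bounding \emph{manifold}. First I would observe that a Hirzebruch surface is precisely the orbifold Hirzebruch surface whose fan is the one in Figure~\ref{egc5}(A), with the four rays $(0,1),(1,b),(0,-1),(-1,0)$. Reading these in cyclic order as $(a_1,b_1),(a_2,b_2),(a_3,b_3),(a_4,b_4)$, one has $(a_1,b_1)=(0,1)=-(0,-1)=-(a_3,b_3)$, so the hypothesis $(a_1,b_1)=\pm(a_3,b_3)$ of Theorem~\ref{bdd_orb_hir_sur} is automatically met. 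Hence the face-simple manifold $P[Q_1,Q_2,Q_3]$ and the rs-characteristic function $\eta$ built in that proof already yield a $5$-dimensional orientable $T^2$-orbifold $W(P,\eta)$ with $\partial W(P,\eta)=X(Q_1,\eta^1)\sqcup X(Q_2,\eta^2)\sqcup X(Q_3,\eta^3)$, where $X(Q_1,\eta^1)\cong X$ is the given Hirzebruch surface and $Q_2,Q_3$ are eye-shapes.

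The key point for the manifold statement is that, because the Hirzebruch fan is nonsingular, every pair of rays attached to a pair of intersecting facets is a $\ZZ$-basis of $\ZZ^2$; consequently the rs-characteristic function $\eta$ is in fact an \emph{s}-characteristic function in the sense of Definition~\ref{isofun}. By Corollary~\ref{manbdd} this forces all local groups appearing in the construction to be trivial, so $W(P,\eta)$ is a genuine smooth orientable $5$-dimensional $T^2$-manifold and each boundary piece is a torus \emph{manifold}. In particular the two edge vectors of each eye-shape $Q_i$ ($i=2,3$) form a basis of $\ZZ^2$, so by the last sentence of Example~\ref{eye-shape2} each $X(Q_i,\eta^i)$ is $T^2$-weakly equivariantly diffeomorphic to the standard sphere $S^4$.

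Finally I would cap off these two spherical boundary components. Taking the group $G$ trivial in Example~\ref{bdd_lens} shows that $S^4=\partial D^5$ equivariantly, with $D^5$ a smooth $T^2$-manifold. Gluing a copy of $D^5$ to $W(P,\eta)$ along each of the boundary components $X(Q_2,\eta^2)\cong S^4$ and $X(Q_3,\eta^3)\cong S^4$ by an orientation-reversing $T^2$-equivariant diffeomorphism produces a compact orientable smooth $5$-dimensional $T^2$-manifold whose boundary is exactly $X$; this proves the corollary. The main obstacle is the combinatorial bookkeeping that simultaneously makes $\eta$ an s-characteristic function and realizes \emph{both} eye-shape boundaries as the standard $T^2$-sphere $S^4$ with matching orientations, so that the capping discs $D^5$ can be attached equivariantly and orientation-compatibly; this is exactly where the nonsingularity of the Hirzebruch fan is used in an essential way.
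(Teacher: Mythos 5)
Your proposal is correct and follows essentially the same route as the paper: both specialize the face-simple manifold $P[Q_1,Q_2,Q_3]$ and rs-characteristic function of Theorem~\ref{bdd_orb_hir_sur} to the nonsingular Hirzebruch fan, observe that $\eta$ is then a super characteristic function so that Corollary~\ref{manbdd} yields a smooth orientable $5$-dimensional $T^2$-manifold $W(P,\eta)$ with boundary $X \sqcup S^4 \sqcup S^4$ (the eye-shape pieces being $S^4$ by Example~\ref{eye-shape2}). The only difference is presentational: you make explicit the final step of capping the two $S^4$ components with equivariant discs $D^5$, which the paper leaves implicit in its concluding sentence.
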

\begin{proof}
Without loss of generality we may assume that  the complete rational fan of $X$ is as in Figure~\ref{egc5} $(A)$. 
Then the corresponding super characteristic function $\eta$ is given as in Figure~\ref{egc4} $(A)$. 
This induces  characteristic functions $\eta^i$ on $Q_i$ for $i=1, 2, 3$. 
Then by Corollary~\ref{manbdd}, we get an orientable $5$-dimensional $T^2$-manifold $W(P, \eta)$ with boundary given by  
$$\partial(W(P, \eta))=X(Q_1, \eta^1) \sqcup X(Q_2, \eta^2) \sqcup X(Q_3, \eta^3)$$ 
where $X(Q_1, \eta^1)$ is $T^2$-equivariantly diffeomorphic to the  Hirzebruch surface $X$ and $X(Q_i, \eta^i)$ is a locally standard torus manifold over an eye-shape for $i=2, 3$. So by Example~\ref{eye-shape2} $X(Q_i, \xi^i)$ is $T^2$-weakly equivariantly diffeomorphic to $S^4$ for $i=2, 3$. 
Therefore any Hirzebruch surface bounds a $5$-dimensional orientable $T^2$-manifold.
\end{proof}

\begin{remark}
Corollary \ref{cob_hir_sur} is proved in Lemma 6.1 in \cite{Sar2}, but we give a much shorter proof here. 
\end{remark}

We conclude this section by presenting  some explicit cobordism relations among orbifold complex projective spaces.
By definition, an orbifold complex projective space is a quasitoric orbifold over a simplex.
Let $Y$ be an $(n+1)$-dimensional orientable smooth manifold with corners in some euclidean space.
Let $\{F_1, \ldots, F_m\}$  be the facets and $\{V_1, \ldots, V_k\}$ be the vertices of $Y$.
Let $\VC(Y)$ be the vertex cut of $Y$, $F^{\prime}_j=F_j \cap Y_V$ and $Q_i$ be the facet of $VC(Y)$
corresponding to the vertex $V_i$ for $i=1, \ldots, k$. Then $Q_i$ is diffeomorphic to an $n$-simplex.
Applying Lemma \ref{indl}, we can define an rs-characteristic function $$\eta \colon \{F_1^{\prime}, \ldots,
F_m^{\prime}\} \to \ZZ^n$$ on $\VC(Y)[Q_1, \ldots, Q_k]$. Let $\mu \colon E \to VC(Y)$ be orientable principal $T^n$-bundle. Note that $\mu^{-1}(Q_i)$ is equivariantly diffeomorphic to $T^n \times Q_i$ for $i=1, \ldots, k$. Then by Lemma \ref{orbbd},
$W(\VC(Y), \eta, \mu)$ is an orientable effective $T^n$-orbifold with boundary where the boundary is the disjoint
union $\bigsqcup_{i=1}^k (T^n \times Q_i)/\sim_b$. Since restriction of the rs-characteristic function $\lambda$ to the 
facets of $Q_i$ is an r-characteristic function, $(T^n \times Q_i)/\sim_b$ is an orbifold complex projective space $O\mathbb{P}_i$ for $i=1, \ldots, k$. So, 
\begin{equation}\label{rel_in_oc}
[O\mathbb{P}_1] + \cdots + [O\mathbb{P}_k] = 0
\end{equation}
in the group $OC_{2n}$. At the end, it is natural to ask the following. 
\begin{ques}\cite[Question 5.9]{Sar3}
What are the other torus cobordism relations among the orbifold complex projective spaces?
\end{ques}

\renewcommand{\refname}{References}

\bibliographystyle{abbrv}
\bibliography{bibliography.bib}

\begin{thebibliography}{10}

\bibitem{ALR}
A.~Adem, J.~Leida, and Y.~Ruan.
\newblock {\em Orbifolds and stringy topology}, volume 171 of {\em Cambridge
  Tracts in Mathematics}.
\newblock Cambridge University Press, Cambridge, 2007.

\bibitem{CK}
D.~A. Cox and S.~Katz.
\newblock {\em Mirror symmetry and algebraic geometry}, volume~68 of {\em
  Mathematical Surveys and Monographs}.
\newblock American Mathematical Society, Providence, RI, 1999.

\bibitem{Da}
M.~W. Davis.
\newblock Groups generated by reflections and aspherical manifolds not covered
  by {E}uclidean space.
\newblock {\em Ann. of Math. (2)}, 117(2):293--324, 1983.

\bibitem{DJ}
M.~W. Davis and T.~Januszkiewicz.
\newblock Convex polytopes, {C}oxeter orbifolds and torus actions.
\newblock {\em Duke Math. J.}, 62(2):417--451, 1991.

\bibitem{Dru1}
K.~S. Druschel.
\newblock Oriented orbifold cobordism.
\newblock {\em Pacific J. Math.}, 164(2):299--319, 1994.

\bibitem{Ful}
W.~Fulton.
\newblock {\em Introduction to toric varieties}, volume 131 of {\em Annals of
  Mathematics Studies}.
\newblock Princeton University Press, Princeton, NJ, 1993.
\newblock The William H. Roever Lectures in Geometry.

\bibitem{GP2}
S.~Ganguli and M.~Poddar.
\newblock Almost complex structure, blowdowns and {M}c{K}ay correspondence in
  quasitoric orbifolds.
\newblock {\em Osaka J. Math.}, 50(4):977--1005, 2013.

\bibitem{GK}
M.~D. Grossberg and Y.~Karshon.
\newblock Equivariant index and the moment map for completely integrable torus
  actions.
\newblock {\em Adv. Math.}, 133(2):185--223, 1998.

\bibitem{Hanke}
B.~Hanke.
\newblock Geometric versus homotopy theoretic equivariant bordism.
\newblock {\em Math. Ann.}, 332(3):677--696, 2005.

\bibitem{HM}
A.~Hattori and M.~Masuda.
\newblock Theory of multi-fans.
\newblock {\em Osaka J. Math.}, 40(1):1--68, 2003.

\bibitem{Hir}
F.~Hirzebruch.
\newblock \"{U}ber eine {K}lasse von einfachzusammenh\"angenden komplexen
  {M}annigfaltigkeiten.
\newblock {\em Math. Ann.}, 124:77--86, 1951.

\bibitem{Jo}
D.~Joyce.
\newblock On manifolds with corners.
\newblock In {\em Advances in geometric analysis}, volume~21 of {\em Adv. Lect.
  Math. (ALM)}, pages 225--258. Int. Press, Somerville, MA, 2012.

\bibitem{Ka}
A.~M. Kasprzyk.
\newblock Bounds on fake weighted projective space.
\newblock {\em Kodai Math. J.}, 32(2):197--208, 2009.

\bibitem{Katz}
G.~Katz.
\newblock Localization in equivariant bordisms.
\newblock {\em Math. Z.}, 213(4):617--645, 1993.

\bibitem{LeTo}
E.~Lerman and S.~Tolman.
\newblock Hamiltonian torus actions on symplectic orbifolds and toric
  varieties.
\newblock {\em Trans. Amer. Math. Soc.}, 349(10):4201--4230, 1997.

\bibitem{LM}
Z.~L{\"u} and M.~Masuda.
\newblock Equivariant classification of 2-torus manifolds.
\newblock {\em Colloq. Math.}, 115(2):171--188, 2009.

\bibitem{LT}
Z.~L{\"u} and Q.~Tan.
\newblock Small covers and the equivariant bordism classification of 2-torus
  manifolds.
\newblock {\em Int. Math. Res. Not. IMRN}, (24):6756--6797, 2014.

\bibitem{Madsen}
I.~Madsen.
\newblock Geometric equivariant bordism and {$K$}-theory.
\newblock {\em Topology}, 25(2):217--227, 1986.

\bibitem{MM}
I.~Moerdijk and J.~Mr\v{c}un.
\newblock {\em Introduction to foliations and {L}ie groupoids}, volume~91 of
  {\em Cambridge Studies in Advanced Mathematics}.
\newblock Cambridge University Press, Cambridge, 2003.

\bibitem{OR}
P.~Orlik and F.~Raymond.
\newblock Actions of the torus on {$4$}-manifolds. {I}.
\newblock {\em Trans. Amer. Math. Soc.}, 152:531--559, 1970.

\bibitem{PS}
M.~{Poddar} and S.~{Sarkar}.
\newblock {On Quasitoric Orbifolds}.
\newblock {\em Osaka J. Math.}, 47(4):1055$-$1076, 2010.

\bibitem{PS2}
M.~Poddar and S.~Sarkar.
\newblock A class of torus manifolds with nonconvex orbit space.
\newblock {\em Proc. Amer. Math. Soc.}, 143(4):1797--1811, 2015.

\bibitem{Pon}
L.~S. Pontryagin.
\newblock {\em Gladkie mnogoobraziya i ih primeneniya v teorii gomotopi\u\i}.
\newblock Trudy Mat. Inst. im. Steklov. no. 45. Izdat. Akad. Nauk SSSR, Moscow,
  1955.

\bibitem{Sar2}
S.~Sarkar.
\newblock {$\mathbb{T}^2$}-cobordism of quasitoric 4-manifolds.
\newblock {\em Algebr. Geom. Topol.}, 12(4):2003--2025, 2012.

\bibitem{Sar3}
S.~Sarkar.
\newblock Complex cobordism of quasitoric orbifolds.
\newblock {\em Topology Appl.}, 194:386--399, 2015.

\bibitem{SS}
S.~Sarkar and D.~Y. Suh.
\newblock A new construction of lens spaces.
\newblock {\em Topology Appl.}, 240:1--20, 2018.

\bibitem{Sat}
I.~Satake.
\newblock The {G}auss-{B}onnet theorem for {$V$}-manifolds.
\newblock {\em J. Math. Soc. Japan}, 9:464--492, 1957.

\bibitem{Sinha}
D.~P. Sinha.
\newblock Computations of complex equivariant bordism rings.
\newblock {\em Amer. J. Math.}, 123(4):577--605, 2001.

\bibitem{Stong}
R.~E. Stong.
\newblock Equivariant bordism and {$(\ZZ_{2})^{k}$} actions.
\newblock {\em Duke Math. J.}, 37:779--785, 1970.

\bibitem{Stong2}
R.~E. Stong.
\newblock Equivariant bordism and {S}mith theory. {IV}.
\newblock {\em Trans. Amer. Math. Soc.}, 215:313--321, 1976.

\bibitem{Tho}
R.~Thom.
\newblock Quelques propri{\'e}t{\'e}s globales des vari{\'e}t{\'e}s
  diff{\'e}rentiables.
\newblock {\em Comment. Math. Helv.}, 28:17--86, 1954.

\bibitem{tom}
T.~tom Dieck.
\newblock Bordism of {$G$}-manifolds and integrality theorems.
\newblock {\em Topology}, 9:345--358, 1970.

\end{thebibliography}

\vspace{1cm}

\vfill

\end{document}